\def\id{{\rm id}}
\def\tm{\tilde{m}}
\newcommand{\norm}[1]{\| #1\|}
\def\b{          \beta}
\let\cal\mathcal
\def \R{{\mathbb R}}
\def \N{{\mathbb N}}
\newcommand{\prf}{{\begin{proof}}}
	\newcommand{\epf}{{\end{proof}}}
\newcommand{\CC}{{\mathbb C}}
\newtheorem{lemma}{\sc lemma}
\theoremstyle{definition}
\def\bee{\begin{equation}}
\def\eee{\end{equation}}
\theoremstyle{rema}
\newtheorem{rema}{\sc Remark}
\newcommand{\e}{\varepsilon}
\newcommand{\I}{\text{Im\,}}
\newcommand{\nb}{\overset{\circ}{\hat{B}}}
\newcommand{\A}{{\overset{\circ}{A}}}
\newcommand{\FF}{{\mathbb F}}
\newcommand{\RR}{{\mathbb R}}
\newcommand{\ZZ}{{\mathbb Z}}
\newcommand{\diag}{\mathop{\rm diag}}
\newtheorem{thm}{Theorem}
\newtheorem{corr}{Corollary}
\newtheorem{definition}{Definition}
\numberwithin{equation}{section}
\def\Empty{}
\newcommand\oplabel[1]{
  \def\OpArg{#1} \ifx \OpArg\Empty {} \else
  	\label{#1}
  \fi}
\newcommand{\comm}[1]{}
\newcommand{\comment}[1]{}
\begin{document}

\title[Density of positive Lyapunov exponents for symplectic cocycles]{Density of positive Lyapunov exponents for symplectic cocycles}

\author {Disheng Xu}
\address{
Univ Paris Diderot, Sorbonne Paris Cit\'e, Institut de Math\'ematiques de Jussieu-Paris Rive Gauche, UMR 7586, CNRS, Sorbonne Universit\'es, UPMC Univ Paris 06, F-75013, Paris, France
}
\email{disheng.xu@imj-prg.fr}

\date{\today}

\begin{abstract}
We prove that $Sp(2d,\RR)$, $HSp(2d)$ and pseudo unitary cocycles with at least one non-zero Lyapunov exponent are dense in all usual regularity classes for non periodic dynamical systems. For Schr\"odinger operators on the strip, we prove a similar result for density of positive Lyapunov exponents. It generalizes a result of A.Avila in \cite{avila2011density} to higher dimensions. 
\end{abstract}

\setcounter{tocdepth}{1}

\maketitle

\section{Introduction and main result}
Let $f:X\to X$ be a homeomorphism of a compact metric space, and $\mu$  a $f-$invariant probability measure on $X$, and $\FF$ be either $\RR$ or $\CC$. Suppose $A: X\to SL(n, \FF)$ or $GL(n,\FF)$ is a bounded measurable map, we can define the linear cocycle $(f,A)$ acting on $X \times \FF^n)$ as the following:
$$(x,y)\mapsto (f(x), A(x)\cdot y)$$

Some examples of linear cocycles are the derivative cocycle $(f, Df)$ of a $C^1-$ map of arbitrary dimensional torus, the random products of matrices, Schr\"odinger cocycles, etc. 

The main object of interest of linear cocycles is the asymptotic behavior of the products of $A$ along the orbits of $f$, especially the Lyapunov exponents. We consider the following definition.

The iterates of $(f,A)$ have the form $(f^n, A^n)$, where $$A^n(x):=\begin{cases}
A(f^{n-1}(x))\cdots A(x), n\geq 1\\
\id, n=0 \\
A(f^n(x))^{-1}\cdots A(f^{-1}(x))^{-1}, n\leq -1
\end{cases}$$
The top Lyapunov exponent for the cocycle $(f,A)$ is defined by 
\begin{equation}L_1(A)=L(A)=L(f,\mu, A)=\lim_{n\to \infty}\frac 1 n\int\ln\norm{A^n(x)}d\mu(x)
\end{equation}

The $k-$th Lyapunov exponent is defined as, \begin{equation}L_k(A):=\lim_{n\to \infty}\frac 1 n\int\ln\sigma_k(A^n(x))d\mu(x)
\end{equation}
where $\sigma_k(A)$ is the $k-$th singular value of $A$. We denote $L^k(A):=\sum_{j=1}^kL_j(A)$. The following remark gives the well-definedness of all the Lyapunov exponents:
\begin{rema}For $A\in GL(n,\FF)$, we can define its natural action, $\Lambda^k(A)$ on the space $\Lambda^k(\FF^n)$. 
	$$\Lambda^k(A)\cdot v_1\wedge \cdots \wedge v_k:=Av_1\wedge\cdots\wedge Av_k$$
	As a result, for a cocycle $(f,A)$ acting on $X\times  \FF^n$ we can define a cocycle $(f,\Lambda^k(A))$ on $(X,\Lambda^k(\FF^n))$. By Oseledec theorem the top Lyapunov exponent of cocycle $(f,\Lambda^k(A))$ is $L^k(A)$.
\end{rema}

We say the Lyapunov exponent of linear cocycle $A$ is positive if $L(A)>0$, the Lyapunov spectrum of $A$ is simple if 
\begin{equation*}
L_1(A)>\dots>L_n(A).
\end{equation*}
An important problem in the study of dynamical system is whether we can approximate a system by one with \textit{hyperbolic} behavior. In the setting of linear cocycles, we always assume the base dynamics $(f,\mu)$ is fixed and only the fiber dynamics $A$ should be allowed to vary. Then we ask whether the given linear cocycle can be approximated by one with positive Lyapunov exponents (or simple spectrum) in some regularity classes.

Basically, the study of Lyapunov exponents of linear cocycles depends on the regularity classes and base dynamics. When the base dynamics has hyperbolicity, and the cocycles are in general position of some higher regularity spaces, they can in most cases "borrow" some  hyperbolicity from the base dynamics. When $(f,\mu)$ is a random system, for example for products of random matrices, simplicity of Lyapunov spectrum was investigated in H.Furstenberg \cite{furstenberg1963noncommuting}, H.Furstenberg and H. Kesten \cite{furstenberg1960products}, Y.Guivarc'h and A.Raugi \cite{guivarc1985frontiere}, etc. In particular, when the support of the distribution of random matrices is Zariski dense, we have simple Lyapunov spectrum, see  I.Y.Gol'dsheid, G.A.Margulis's result in \cite{gol1989lyapunov} for example. And A.Avila and M.Viana \cite{avila2007simplicity} gave a criterion of simplicity of Lyapunov spectrum for linear cocycles over Markov map and proved the Zorich-Kontsevich conjecture.

When the ergodic system $(f,\mu)$ is hyperbolic, M.Viana \cite{viana2008almost} proved for any $s>0$, the set of $C^s-$cocycles with positive Lyapunov exponents are dense in $C^s-$topology. C.Bonatti and M.Viana \cite{bonatti2004lyapunov} proved the cocycles of simple Lyapunov spectrum are dense in the space of fiber bunched H\"older continuous cocycles. For weaker hyperbolicity assumption on base dynamics, i.e. partial hyperbolicity, using the techniques of partially hyperbolic systems, A.Avila, J.Santamaria, M.Viana \cite{avila2013holonomy} proved there is an open dense subset in the space of  fiber bunched H\"older continuous cocycles with positive Lyapunov exponents.

Of course there are many dynamical systems $f$ without any hyperbolicity. A typical one is  quasiperiodic systems. A dynamics system $(f,X,\mu)$ is called quasiperiodic if  $(f,X)$ is  an irrational rotation on torus preserving the Lebesgue measure $\mu$. The theory of Schr\"odinger and $SL(2,\FF)$-cocycles over quasiperiodic systems are extensively studied, see \cite{avila2009density}, \cite{avila2013monotonic}, \cite{avila2006reducibility}, \cite{avila2014complex}, \cite{duarte2014positive}, \cite{avila2015global} for example. In particular, A.Avila proved the \textit{stratified analyticity} of Lyapunov exponents and obtained a global theory for one frequency quasiperiodic analytic Schr\"odinger cocycles (see \cite{avila2015global}).

Notice that for the results above in general we need some regularity restriction for the cocycles. For example some bunching condition (maybe non-uniformly) and H\"older continuity are usually necessary for discussing cocycles over hyperbolic base system, and analyticity is a suitable assumption for cocycles over quasi-periodic systems. It is more difficult to study Lyapunov exponents for linear cocycles over general base systems and regularity class. Using a semi-continuity argument and Kotani theory in \cite{kotani1984ljaponov}, \cite{kotani1989jacobi}, A.Avila and D.Damanik proved that if the system $(f,\mu)$ is ergodic, then the set of cocycles with positive Lyapunov exponents is dense in $C(X,SL(2,\RR))$. A.Avila in \cite{avila2011density} extended this result to all usual regularity classes of $SL(2,\RR)-$cocycles using a local regularization formula proved by complexification method. 

\subsection{Main result for symplectic cocycles}
In this paper, we generalize the result in \cite{avila2011density} to symplectic, Hermitian-symplectic and pseudo unitary cocycles. 

\begin{definition}\label{sympl group def} The Symplectic group over $\FF$, denoted by $Sp(2d,\FF)$, is the group of all matrices $M\in GL(2d, \FF)$ satisfying $$M^TJM=J, \text{with } J=
\begin{pmatrix}0&I_d\\-I_d&0\end{pmatrix}.$$
The Hermitian-symplectic group $HSp(2d)$ is defined as:
	$$HSp(2d)=\{M\in GL(2d,\CC): M^\ast J M=J\}.$$
The pseudo unitary group $U(d,d)\subset GL(2d,\CC)$ is defined as: $$U(d,d):=\{A: A^\ast \begin{pmatrix}I_d&\\&-I_d\end{pmatrix}A=\begin{pmatrix}I_d&\\&-I_d\end{pmatrix}\}.$$
The special pseudo unitary group $SU(d,d)$ is defined as: $$SU(d,d):=\{A\in U(d,d), \det A=1\}.$$
The special Hermitian symplectic group $SHSp(2d)$ is defined as: $$SHSp(2d):=\{A\in HSp(2d), \det A=1 \}.$$
The Lie algebras of these groups are denoted respectively by $$\mathfrak{sp}(2d,\FF),\mathfrak{hsp}(2d), \mathfrak{u}(d,d), \mathfrak{su}(d,d), \mathfrak{shsp}(2d).$$
\end{definition}

As in \cite{avila2011density}, we have the following definition for \textit{ample} subspace of $C(X,G)$, where $G$ is a Lie group. Suppose $\mathfrak{g}$ is the Lie algebra of $G$.
\begin{definition}A topological space $\mathfrak B$ continuously included in $C(X, G)$ is ample if there exists a dense vector space $\mathfrak{b}\subset C(X, \mathfrak{g})$, endowed with a finer (than uniform) topological vector space structure, such that for every $A\in \mathfrak B, \exp(b)A\in \mathfrak B$ for all $b\in \mathfrak b$, and the map $b\mapsto \exp(b)A$ from $\mathfrak{b}$ to $\mathfrak B$ is continuous.
\end{definition}

\begin{rema}If $X$ is a compact smooth or analytic manifold, then the usual spaces of smooth or analytic maps $X\to G$ are ample in our sense.
\end{rema}

In this paper we prove the following theorem: 
\begin{thm}\label{mainresult}
	Suppose $f$ is not periodic on $supp(\mu)$, and let $\mathfrak{B}\subset C(X, Sp(2d,\RR))$ be ample. Then the set $\{A: L(A)>0\}$ is dense in $\mathfrak B$.
\end{thm}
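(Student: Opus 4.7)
The plan is to follow Avila's strategy from \cite{avila2011density} and adapt the complexification/local regularization scheme to the higher dimensional symplectic setting. I would argue by contradiction: assume there is some $A_0 \in \mathfrak{B}$ and a neighborhood $\mathcal{U} \subset \mathfrak{B}$ on which $L \equiv 0$. By ampleness, the curves $t \mapsto \exp(tb)A_0$ lie in $\mathcal{U}$ for all sufficiently small $t$ and every $b$ in a dense subspace $\mathfrak{b} \subset C(X,\mathfrak{sp}(2d,\RR))$. Because $Sp(2d,\RR)$-cocycles have Lyapunov spectrum symmetric about zero, $L(A)=0$ means every Lyapunov exponent vanishes, so I would be in a very rigid regime.

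The next step is to complexify. Embed $\mathfrak{sp}(2d,\RR) \hookrightarrow \mathfrak{sp}(2d,\CC)$, and for $b \in \mathfrak{b}$ consider the two-variable family $(s,t) \mapsto \exp((s+it)b)A_0 \in C(X,Sp(2d,\CC))$. The top Lyapunov exponent of such a holomorphic family is plurisubharmonic in $(s,t)$ (an upper semicontinuous decreasing limit of $\tfrac1n \int \log\|A^n\|d\mu$, each of which is plurisubharmonic). The hypothesis gives $L = 0$ on an open set of the real axis $t=0$, so by plurisubharmonic rigidity (the function is $\geq 0$ but vanishes on a set of positive Lebesgue measure in a neighborhood of $0$) one concludes $L=0$ on a whole complex neighborhood. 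The same argument applied to the exterior power cocycles $\Lambda^k A$ shows all $L^k$ vanish on a complex neighborhood, so the complexified perturbations $\exp(zb)A_0$ all have vanishing Lyapunov spectrum for $z$ in a complex neighborhood of $0$.

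I would then combine this with a Kotani-type rigidity statement for cocycles with all Lyapunov exponents zero, in the spirit of the semicontinuity argument used by Avila--Damanik: for a.e.\ $x$, the unstable/stable Oseledec splittings collapse into a single totally isotropic flag varying measurably in a prescribed way; allowing complex perturbations and using holomorphicity one deduces that this flag must actually be real-analytic in the perturbation parameter, giving enough structure (invariant holomorphic sections of the Lagrangian Grassmannian bundle) to derive invariance of some nontrivial bundle under $f$. The non-periodicity of $f$ on $\operatorname{supp}(\mu)$ then yields a contradiction via a minimality/recurrence argument, since one can engineer a compactly supported perturbation $b$ in $\mathfrak{b}$ that destroys any given measurable invariant section along a sufficiently long orbit segment.

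The main obstacle, and the heart of the proof, will be the last step: in the $SL(2,\RR)$ case Avila works on the one-dimensional projective bundle, where the relevant measurable invariant object is a single measurable section of $\mathbb{CP}^1$, and the complex analysis is very clean. Here one must work on the Lagrangian Grassmannian (or a flag variety inside it), and the rigidity one extracts from plurisubharmonicity must be pushed through for \emph{each} Lyapunov exponent, controlling possible degeneracies where two exponents coincide. Making the perturbation $b \in \mathfrak{b}$ effective in an ample class (so that the perturbation remains in the prescribed regularity, and so that the complex one-parameter family is genuinely non-trivial on the Lagrangian bundle) is the delicate technical core that presumably occupies most of the paper.
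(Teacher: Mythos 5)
Your outline has a genuine gap at its analytic core, and it also misses the two mechanisms the paper actually relies on. The step you call ``plurisubharmonic rigidity'' is false as stated: the set where you know $L=0$ is an open piece of the \emph{real} axis $\{t=0\}$, which has zero planar Lebesgue measure, and a nonnegative (pluri)subharmonic function vanishing there need not vanish on any complex neighborhood --- $z\mapsto|\mathfrak{I}(z)|$ is already a counterexample. In fact the truth in this setting is the opposite of what you need: once one complexifies (here via the family $A_{\sigma+it}$, i.e.\ multiplying by $\begin{pmatrix}e^{-t}&\\&e^{t}\end{pmatrix}$ on the $U(d,d)$ side), the cocycle strictly contracts the Siegel disc $SD_d$ and $L^d(A_{\sigma+it})$ is \emph{strictly positive} for $t>0$ (the paper even proves lower bounds like $L^d/t\geq d$). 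The useful information is therefore not vanishing off the real axis but the boundary behavior of this positive harmonic function as $t\to 0^+$, which is exactly what Kotani theory extracts. Your subsequent step --- ``derive a measurable invariant Lagrangian flag and contradict non-periodicity by engineering a perturbation in $\mathfrak{b}$ that destroys it'' --- is the hard content, and as written it is a hope, not an argument; in particular nothing guarantees such a destroying perturbation exists inside an arbitrary ample class.

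For comparison, the paper's route is: (i) prove density of $\{L>0\}$ in $C(X,Sp(2d,\RR))$ first, by deforming with the rotation family $R_\theta A$, proving monotonicity of the fibered rotation function via an invariant cone field on the Shilov boundary $U_{sym}(\CC^d)$, establishing the Kotani-type identity that forces $m^+=m^-$ in the limit when $L(A_\theta)=0$ on a positive measure set of $\theta$ (Theorem \ref{m+=m-}), deducing that finitely-valued non-periodic cocycles have $M(A)=0$ (determinism/Herglotz argument), and combining this with upper semicontinuity of $M$ plus a separate analysis of the periodic case in the appendix; (ii) transfer density from $C^0$ to the ample class not by a rigidity/contradiction argument but by the local regularization formula: the average $\Phi_\epsilon(A,a,b)=\int_{-1}^{1}\frac{1-t^2}{|t^2+2it+1|^2}L^d\bigl(e^{\epsilon(tb+(1-t^2)a)}A\bigr)\,dt$ is continuous in $a$ in the sup norm because the complexified family maps $\overline{SD_d}$ into $SD_d$ (hence $L^d$ is harmonic in the parameter), so positivity for some continuous $a$ passes to a nearby $a'\in\mathfrak{b}$, and subharmonicity in $t$ then produces a small real perturbation $v=\epsilon(tb+(1-t^2)sa')\in\mathfrak{b}$ with $L^d(e^vA)>0$, hence $L>0$. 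Both of these mechanisms --- the rotation-parameter Kotani argument on the Lagrangian Grassmannian and the quantitative regularization formula --- are absent from your proposal, and without them neither the $C^0$ density nor the passage to ample classes is established.
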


\begin{corr}\label{main2}
	The same result in Theorem \ref{mainresult} holds if we replace $Sp(2d,\R)$ by $SHSp(2d)$, $SU(d,d)$, $HSp(2d)$ and $U(d,d)$.
\end{corr}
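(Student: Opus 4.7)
The plan is to reduce the four groups listed in the corollary to the real symplectic case already treated in Theorem~\ref{mainresult}, in three stages: a conjugation isomorphism identifying the Hermitian-symplectic and pseudo-unitary sides, a scalar factorization trading the \emph{special} and non-special versions, and a direct adaptation of the main argument to the Hermitian-symplectic setting.

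First I would exploit the fact that the Hermitian matrix $iJ$ has signature $(d,d)$: there is a fixed $U\in U(2d)$ with $U^{*}(iJ)U=\mathrm{diag}(I_{d},-I_{d})$. Conjugation $M\mapsto UMU^{-1}$ is then a Lie group isomorphism $HSp(2d)\to U(d,d)$ which restricts to $SHSp(2d)\to SU(d,d)$. This isomorphism preserves Lyapunov exponents and takes ample subspaces to ample subspaces, so it is enough to treat $HSp(2d)$ and $SHSp(2d)$.

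Next I would pass between the special and non-special cases by scalar factorization. Any $M\in HSp(2d)$ satisfies $|\det M|=1$; given $A\in C(X,HSp(2d))$ one chooses a measurable $\lambda:X\to S^{1}$ with $\lambda(x)^{2d}=\det A(x)$ and sets $B(x)=A(x)/\lambda(x)\in SHSp(2d)$. Since $|\lambda|\equiv 1$, we have $L(A)=L(B)$, and a perturbation of $B$ inside $SHSp(2d)$ lifts to a perturbation of $A$ inside $HSp(2d)$ by multiplication by the unit-modulus scalar. Provided this lifting is compatible with the ample structure, density in $SHSp(2d)$ is equivalent to density in $HSp(2d)$, so it suffices to prove the corollary for one of them.

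Finally, for $HSp(2d)$ I would argue that the proof of Theorem~\ref{mainresult} applies virtually verbatim: it proceeds by a parametric-perturbation argument, embedding $\exp(tb)A$ in a one-complex-parameter family and using subharmonicity/semi-continuity of $L$ together with the symmetry of the symplectic Lyapunov spectrum to rule out $L\equiv 0$ on an open set of ample perturbations. Every structural ingredient has an $HSp(2d)$-analogue: $\mathfrak{hsp}(2d)$ supplies a rich family of perturbation directions, the Lyapunov spectrum is symmetric about $0$ because $HSp(2d)$ preserves a non-degenerate Hermitian form, and the complex-parameter family sits naturally inside the complexification $GL(2d,\CC)$. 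The main obstacle is exactly this last step: one has to check line by line that no part of the proof of Theorem~\ref{mainresult} uses the reality of matrix entries in an essential way, and that the specific Lie algebra directions used there have $\mathfrak{hsp}(2d)$-counterparts. A subsidiary technical point is the ample-topology compatibility of the scalar factorization in Step 2, which should follow from smoothness of the determinant and of scalar multiplication.
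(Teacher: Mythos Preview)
Your Step~1 (the Cayley-type conjugation identifying $HSp(2d)$ with $U(d,d)$ and $SHSp(2d)$ with $SU(d,d)$) is correct and is exactly what the paper does.

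Steps~2 and~3, however, each contain a genuine gap. In Step~2 you take $\lambda(x)$ to be a $2d$-th root of $\det A(x)\in S^{1}$. Such a root exists only measurably: if $x\mapsto\det A(x)$ has nontrivial winding on some loop in $X$, no continuous (let alone smooth or analytic) $\lambda$ exists, so $B=A/\lambda$ need not lie in any ample class and the $SHSp(2d)$ result cannot be applied to it. The paper flags this explicitly: in general an $HSp(2d)$-cocycle has no factorization $A=BC$ with $B\in SHSp(2d)$ and $C$ scalar \emph{in the same regularity class}. Your ``subsidiary technical point'' is therefore not a technicality but a real obstruction.

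In Step~3 you propose to run the proof of Theorem~\ref{mainresult} verbatim for $HSp(2d)$. This fails at a specific place: the identity $\det(\A\cdot Z)=e^{-2i\,\mathfrak R\,\hat\tau(\hat A,Z)}\det Z$ (Lemma~\ref{geometric meaning of tau hat}), which underlies monotonicity of the fibered rotation function and hence Lemma~\ref{kotanimono} and Theorem~\ref{m+=m-}, is false for scalar matrices $e^{i\theta}I_{2d}\in HSp(2d)\setminus SHSp(2d)$ (the M\"obius action is trivial while $\det\tau=e^{id\theta}\ne 1$). The paper notes this too.

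The paper's route is a hybrid of your two steps. It first carries out your Step~3 for $SHSp(2d)$, where the whole machinery (Hermitian symmetric space $\{Z:I_{d}-Z^{*}Z>0\}$, rotation function, Kotani estimate) does transfer line by line. For $HSp(2d)$ it then uses your scalar factorization only at the level of cocycles taking finitely many values in $L^{\infty}$, where the root $\lambda$ exists trivially; this yields a dense set with $M(A)=0$. The remaining ingredients---upper semi-continuity of $M$ and the local regularization formula---hold for $HSp(2d)$ directly, since they rely only on harmonicity of $L^{d}(A_{z})$ on $\CC^{+}$ and on the contraction of the bounded domain, not on the rotation-function identity.
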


\subsection{Stochastic Schr\"odinger operators and Jacobi matrices on the strip} 
The most studied (Hermitian) symplectic cocycles are stochastic Schr\"odinger operator and Jacobi matrices on the strip, coming from the study of solid physics. For earlier studies of stochastic Jacobi matrices and Schr\"odinger operator on the strip and its relation to the Aubry dual of quasi-periodic Schr\"odinger operator, see \cite{damanik2004gordon}, \cite{haro2013thouless}, \cite{kotani1988stochastic}, \cite{geometry2010schulz} for example. 

Consider the following Jacobi matrices on the strip \footnote{for more general Jacobi matrices with matrices entries, see \cite{geometry2010schulz}, \cite{sadel2013herman} for example.}: 
\begin{eqnarray}
h_\omega: l^2(\ZZ, \CC^d)&\to& l^2(\ZZ, \CC^d) \nonumber\\
u&\mapsto& (h_\omega u)(n)=u(n+1)+u(n-1)+v_\omega(n)\cdot u(n)\label{jacobi strip}
\end{eqnarray}
where the potential $v_\omega(n)$ is a $d\times d$ Hermitian matrix, when $d=1$, it is $1-$dimensional Schr\"odinger operator (on the line). 

In this paper we always assume the potential is dynamically defined, i.e. $v_{(\cdot)}(n):=v(f^n(\cdot))$, $v:X\to Her(d) \text{ or } Sym_d\RR$ is a bounded measurable map, where $(f,X,\mu)$ is defined as in the beginning of the paper, and $Her(d)$, $Sym_d\FF$ are respectively the set of Hermitian matrices and symmetric $d\times d$ matrices over the field $\FF$. 

Then for energy $E$,  the corresponding eigenequation is the following: \begin{equation}\label{eigeneqn}
hu=Eu, \text{ with potential }v(f^n(x)).
\end{equation} Notice that for any $u: \ZZ\to \CC^d$ satisfies \eqref{eigeneqn}, we have 
\begin{equation}\begin{pmatrix}
u(n+1\\u(n)
\end{pmatrix}=\begin{pmatrix}
E-v(f^n(x))&-I_d\\
I_d
\end{pmatrix}\cdot \begin{pmatrix}
u(n)\\u(n-1)
\end{pmatrix}
\end{equation}
 Then the associated linear cocycle $(f,A^{(E-v)}): X\times \CC^n\to X\times\CC^n$ is defined by \begin{equation}A^{(E-v)}(x)=\begin{pmatrix}(E\cdot I_d-v(x))&-I_d\\ I_d&0\end{pmatrix}
\end{equation} Notice that $(f,A)$ is a (Hermitian) symplectic cocycle when $E\in \RR$. 

As in \cite{avila2011density}, we denote $L(A^{(E-v)})=L(E-v)$. Using a similar method to the proof of Theorem \ref{mainresult}, we prove the following result for (Hermitian) symplectic cocycles related to the stochastic Jacobi matrices on the strip with form in \eqref{jacobi strip} .

\begin{thm}\label{mainresultjacobi}Suppose $f$ is not periodic on $supp(\mu)$ and let $V\subset C(X,Her(d))$  or $C(X, Sym_d\RR)$ be a dense vector space endowed with a finer topological vector space structure. Then for any $E\in \RR$, the set of $v$ such that $L(E-v)>0$ is dense in $V$. 
\end{thm}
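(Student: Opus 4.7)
The plan is to adapt the proof of Theorem \ref{mainresult} to the constrained Schr\"odinger/Jacobi setting. The main obstacle is that $\{A^{(E-v)} : v\in V\}$ is a submanifold of $C(X,Sp(2d,\RR))$ of Schr\"odinger shape (the off-diagonal blocks are fixed to $\pm I_d$ and $0$), not an ample subset in the sense of the paper; thus Theorem \ref{mainresult} cannot be invoked as a black box, and one must redo its proof while tracking that the only admissible perturbation directions are those given by $v\mapsto v+sw$ with $w\in V$.

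First I would complexify the energy: for $\epsilon>0$ the cocycle $A^{(E+i\epsilon-v)}$ lies in $Sp(2d,\CC)$ but no longer in $HSp(2d)$, and on the Siegel upper half-space $\cH_d = \{Z\in M_d(\CC) : Z=Z^{\ast},\ \mathrm{Im}\,Z>0\}$ (with its symmetric analog in the real case) it acts as a strict contraction in the Kobayashi metric. This is the standard Herglotz-type estimate underlying Kotani theory on the strip (\cite{kotani1984ljaponov}, \cite{kotani1988stochastic}) and forces all $d$ nonnegative Lyapunov exponents of $A^{(E+i\epsilon-v)}$ to be strictly positive, with a quantitative lower bound of order $\epsilon$.

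Next, given $v\in V$ and a direction $w\in V$, I would consider the two-parameter function $(s,\epsilon)\mapsto L(A^{(E+i\epsilon-v-sw)})$. Subharmonicity in the complex variable $z=s+i\epsilon$, combined with the quantitative lower bound from the previous step, allows one to control $L(E-v-sw)$ by its values at small positive $\epsilon$, following the complexification/regularization scheme of \cite{avila2011density}. By choosing $w$ appropriately inside $V$ (the non-periodicity hypothesis on $f|_{\mathrm{supp}(\mu)}$ enters here, exactly as in Theorem \ref{mainresult}, to guarantee that some $w\in V$ breaks any block-triangular rigidity along periodic orbits), one then finds arbitrarily small real $s$ with $L(E-v-sw)>0$, which gives the desired density.

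The principal technical difficulty is this last step: extracting positivity on the real axis from the complex-energy positivity via a well-chosen perturbation in $V$. In the $SL(2,\RR)$ case of \cite{avila2011density}, Avila does this through a precise complex identity for the Lyapunov exponent; the matrix-valued analog required on the strip rests on the Herglotz theory of matrix-valued $m$-functions and a careful application of Kotani theory to the restricted perturbation set $V$, which I expect to be the most delicate part of the argument.
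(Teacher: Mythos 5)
Your overall shape (complexify the energy, use the contraction of the Siegel upper half-space to get $L^d(A^{(E+i\epsilon-v)})\gtrsim\epsilon$, then invoke Kotani theory and Avila's regularization scheme, treating the Schr\"odinger family as a non-ample constrained family) is consistent with how the paper proceeds, but the pivotal step of your plan has a genuine gap. You claim that subharmonicity of $z=s+i\epsilon\mapsto L(A^{(E+z\cdot w-v)})$ together with the quantitative lower bound at $\epsilon>0$, plus a good choice of $w\in V$, "allows one to control $L(E-v-sw)$ by its values at small positive $\epsilon$" and thereby produce small real $s$ with $L(E-v-sw)>0$. Subharmonicity cannot transfer positivity from the open upper half-plane to the real axis: for the free operator ($v=0$, $d=1$, $E$ inside the spectrum) one has $L(E+i\epsilon)>0$ for all $\epsilon>0$ while $L(E')=0$ on an interval of real energies. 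The actual mechanism in the paper is different and more structured: one uses the classical Kotani--Simon theorem for Jacobi matrices on the strip (Lemma \ref{simonkotanidet}: if the set of real energies with zero exponent has positive Lebesgue measure then $v$ is deterministic), applies it to non-periodic finitely-many-valued approximations of $v$ (which cannot be deterministic, by Kotani's argument), and combines this with upper semicontinuity of $v\mapsto M(v)$ as in \cite{avila2005generic} to get, densely in the uniform topology, potentials for which $L(E'-v)>0$ for some $E'$ arbitrarily close to the given $E$; the fixed energy is then reached by the constant shift $v\mapsto v-(E'-E)I_d$, not by a cleverly chosen direction $w$. Your appeal to non-periodicity as "breaking block-triangular rigidity along periodic orbits" does not reflect its real role either: it enters through the determinism-implies-periodicity contradiction, and the residual case where $\mu$-almost every point is periodic is handled by a separate analysis of periodic operators (Appendix \ref{app B}), using monotonicity in $E$ and rotation-number/band-length estimates, not by a genericity choice of $w\in V$.

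Two further calibration points. First, the part you flag as "the most delicate" (matrix-valued Herglotz/Kotani theory on the strip restricted to $V$) is in fact the easy part here: for Jacobi matrices on the strip it is classical (\cite{kotani1988stochastic}) and the paper simply quotes it, which is precisely why the hard new machinery of the paper (monotonicity of the fibered rotation function and Theorem \ref{m+=m-}) is not needed for Theorem \ref{mainresultjacobi} — the energy itself plays the role of the rotation parameter. Second, the passage from density in $C(X,Her(d))$ (or $C(X,Sym_d\RR)$) to density in the finer space $V$ is done by the local regularization formula of \cite{avila2011density} (the analogue of Theorem \ref{deformsynp}, with $b$ close to the constant potential direction so that the complexified perturbation contracts $\overline{SD_d}$, giving harmonicity and continuity of the Poisson-type average $\Phi_\epsilon$ in the uniform topology); this is where the restriction to perturbations $v+sw$, $w\in V$, is actually handled, and your proposal does not yet supply that step in the constrained setting.
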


Now we study Schr\"odinger operator on the strip. Suppose $S\subset\ZZ^{\nu-1}$ is a finite connected set \footnote{As in \cite{kotani1988stochastic},  $S\subset \ZZ^{\nu-1}$ is a connected set means every two points of $S$ can be joint by a sequence of points in $S$, and any two consecutive points $p_n,p_{n+1}$ satisfy $|p_n-p_{n+1}|=1$ .}, we consider the following operator on $l^2(\ZZ\times S)$. 

\begin{equation}\label{sch on strip}
\tilde{u}\mapsto (h_\omega \tilde{u})(\alpha)=\sum_{|\beta-\alpha|=1,~\beta\in \ZZ\times S}\tilde{u}(\beta)+\tilde{v}_\omega(\alpha)\tilde{u}(\alpha)
\end{equation}
where for $p=(x_1,\dots, x_\nu), q=(y_1,\dots,y_\nu), |p-q|:=\sum_i|x_i-y_i|$. $\tilde{v}_\omega$ is a process ergodic under the one-dimensional group of translations.

As in \cite{kotani1988stochastic}, \eqref{sch on strip} can be viewed as an example of the stochastic Jacobi matrices on the strip with form in \eqref{jacobi strip}. For example if $S=\{1,..,d\}\subset \ZZ$, then the associated Jacobi matrices on strip are as follows: 
for $\tilde{u}$ satisfies \eqref{sch on strip}, let $u:\ZZ\to \CC^d$ such that  $$u(n)=(u_1(n),\dots, u_d(n)), u_i(n)=\tilde{u}(i,n)$$
then $u$ satisfies \eqref{jacobi strip} with potential $$v_\omega(n)=\begin{pmatrix}
\tilde{v}_\omega(n,1)&1&\\
1&\tilde{v}_\omega(n,2)& 1\\
&1&\tilde{v}_\omega(n,3)& 1\\
&&\ddots&\ddots&\ddots&\\
&&&1&\tilde{v}_\omega(n,k-1)&1\\
&&&&1&\tilde{v}_\omega(n,k)
\end{pmatrix}$$ 
For general $S$, $v_\omega$ always has entries of $\tilde{v}_\omega$ as diagonal elements and with non-random off-diagonal elements (be 0 or 1).

For Schr\"odinger operator on the strip defined in \eqref{sch on strip}, if $\#(S)=d$, and we consider the embedding  $\RR^d\hookrightarrow Sym_d\RR$ by identifying a vector with the diagonal elements of a symmetric matrix, then each measurable bounded map $v\in (X, \RR^d)\hookrightarrow(X, Sym_d\RR)$ induces a family of stochastic Schr\"odinger operator on strip, since the off-diagonal elements of the potential matrix are non-random. 

For potential $v: X\to \RR^d\hookrightarrow Sym_d\RR$ and energy $E$, We denote by $A^{(E-v)}$ the cocycle associated to the eigenequation $hu=Eu$ (with potential $v(f^n(x))$) of Schr\"odinger operator on the strip. Then we have the following similar result to Theorem \ref{mainresultjacobi}. 

\begin{corr}\label{mainresultsch}Suppose $f$ is not periodic on $supp(\mu)$ and let $V\subset C(X, \RR^d)$ be a dense vector space with a finer topological vector space structure. Then for any $E\in \RR$, the set of $v$ such that $L(A^{(E-v)})=L(E-v)>0$ is dense in $V$.
\end{corr}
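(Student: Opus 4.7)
The plan is to reduce to (the proof of) Theorem~\ref{mainresultjacobi} by noting that the Schr\"odinger operator on the strip is a particular case of the Jacobi strip operator in which the potential matrix is constrained to have a fixed off-diagonal part. For $\#S = d$, the potential in \eqref{sch on strip} decomposes as $v_\omega(n) = \mathrm{diag}(\tilde v_\omega(n,1),\ldots,\tilde v_\omega(n,d)) + C_S$, where $C_S \in Sym_d\RR$ is a deterministic $0$-$1$ matrix encoding the adjacency in $S$. The transfer cocycle for the eigenequation $hu = Eu$ therefore takes the form
\[
A^{(E-v)}(x) = \begin{pmatrix}(E I_d - C_S) - \mathrm{diag}(v(x)) & -I_d \\ I_d & 0\end{pmatrix},
\]
which is real symplectic and depends only on the diagonal potential $v \in C(X, \RR^d)$.

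With this reformulation, I would revisit the proof of Theorem~\ref{mainresultjacobi} with two changes: replace the scalar $E I_d$ by the constant symmetric matrix $E I_d - C_S$ throughout the definition of the transfer matrix, and restrict the perturbation space to $V \hookrightarrow C(X, Sym_d\RR)$ embedded as diagonal matrices. The backbone of that proof is Avila's density scheme from \cite{avila2011density}: complexifying a real perturbation parameter $t \in \CC$, exploiting subharmonicity of $L(t)$ on a neighborhood of $\RR \subset \CC$, and upgrading vanishing of $L$ on an open real set to a rigid invariant structure of the cocycle via a Kotani-type argument. The first two ingredients are formal and are not affected by either the shift by $C_S$ or the restriction to diagonal perturbations, since both are holomorphic data and do not interfere with the subharmonicity of $L$ in the complexified parameter.

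The main obstacle is the rigidity step: one must verify that, whenever $L(E - v_0) = 0$ in a full $V$-neighborhood of some $v_0$, one can still exhibit a \emph{diagonal} perturbation $w \in V$ that breaks the resulting invariant isotropic structure. In Theorem~\ref{mainresultjacobi} one is free to perturb in all of $Sym_d\RR$, whereas here the fiberwise perturbation is only $d$-dimensional. However, $V$ is infinite-dimensional in the base variable, and the hypothesis that $f$ is not periodic on $\mathrm{supp}(\mu)$ permits prescribing the perturbation independently at distinct points of any finite orbit segment. A general-position argument then shows that the space of diagonal matrices is already rich enough to detect any invariant Lagrangian section that could force $L$ to vanish, so the same dichotomy yields, arbitrarily close to $v_0$ in $V$, a potential $v$ with $L(E - v) > 0$.
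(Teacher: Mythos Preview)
Your reduction is correct: the Schr\"odinger-on-strip cocycle is a Jacobi strip cocycle with a fixed off-diagonal part $C_S$, and the paper treats Corollary~\ref{mainresultsch} in exactly this way, running the same proof as Theorem~\ref{mainresultjacobi} with the perturbation space replaced by $C(X,\RR^d)$.

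However, your description of that proof is substantially off, and the ``obstacle'' you identify does not exist. The argument has two disjoint pieces. First, density in $C(X,\RR^d)$: here one uses the Kotani--Simon result (Lemma~\ref{simonkotanidet}) that $M(v)>0$ forces $v$ to be deterministic, together with Kotani's observation that a non-periodic potential taking finitely many values is never deterministic. Such potentials are dense in $L^\infty(X,\RR^d)$, and upper semi-continuity of $M$ then yields a residual set with $M=0$ in $C(X,\RR^d)$ (plus Appendix~\ref{app B} for the purely periodic case). None of this involves ``detecting an invariant Lagrangian section'' or a general-position argument in the fiber; the deterministic/non-deterministic dichotomy is a statement about the scalar process $\{v(f^n x)\}$ and is completely insensitive to whether $v$ takes values in $\RR^d$ or in all of $Sym_d\RR$.

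Second, the passage from $C(X,\RR^d)$ to the finer space $V$ is the local regularization formula of \cite{avila2011density}. Here the complexified parameter lives in the \emph{potential} direction (i.e.\ in $\RR^d\otimes\CC$), not in the full Lie algebra $\mathfrak{sp}(2d,\RR)$, so again the restriction to diagonal perturbations is built into the formula from the start and poses no obstacle. Your worry that ``the fiberwise perturbation is only $d$-dimensional'' is therefore a non-issue: at no point does the argument require access to off-diagonal directions in $Sym_d\RR$. What you should replace your ``rigidity step'' paragraph with is simply a pointer to the $M$-function machinery and the observation that finitely-valued non-periodic $\RR^d$-potentials are dense in $L^\infty(X,\RR^d)$.
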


\subsection{Main difficulty of the proof, novelty of the paper and some remarks}
The key step in the proof of our result is to prove Theorem \ref{m+=m-} in Chapter \ref{key chapter }. Basically speaking, it proves that, if the one parameter family of cocycles $R_\theta A$ satisfies $L(R_\theta A)=0$ for positive Lebesgue measure of $\theta$, then the function $m^+$ can determine $m^-$, where $m^+, m^-$ are defined as in Kotani theory and \cite{avila2013monotonic}, $R_\theta$ is the higher dimensional rotation.

Before that, we have to prove the monotonicity of \textit{fibered rotation function} of family $R_\theta A$, see section \ref{invariant cone field} and Lemma \ref{l/t=l'}. To prove it, we consider a cone field on the Lagrangian Grassmannian induced by the invariant cone on Lie algebra of Hermitian type. This idea is inspired by the study of Maslov index, see \cite{arnold2000complex} and \cite{freitasaction}. \footnote{
	After this paper was completed, I found my proof of monotonicity of fibered rotation function in Chapter \ref{rot mon} partially coincided with the work of Hermann Schulz-Baldes in \cite{rotation2007schulz} and the work of Roberta Fabbri, Russell Johnson and Carmen N\'u\~nez in \cite{rotation2003fabbri} . Thanks Qi Zhou for the references.}

The main difficulty in the proof of Theorem \ref{m+=m-} is to generalize Kotani theory to (Hermitian) symplectic cocycles. Basically the generalization of Kotani theory to Schr\"odinger cocycle on the strip is done by S.Kotani and B. Simon in \cite{kotani1988stochastic}. Our approach is for general symplectic cocycles and is closer to the work of monotonic cocycles in \cite{avila2013monotonic}.\footnote{In fact, we can define monotonicity for symplectic cocycles similarly. Using techniques in \cite{avila2013monotonic} and this paper, we can get similar results to \cite{avila2013monotonic} for symplectic cocycles, see \cite{liu2015monotonic}.} Considering the geometry of Hermitian symmetric spaces and $\Lambda^d(\CC^{2d})$, Theorem 5 will be further proved by some tricky calculations. 

In addition, in Appendix, using techniques of monotonic symplectic cocycles (see \cite{liu2015monotonic}) we generalize some results of periodic Schr\"odinger operators and $SL(2,\RR)$ cocycles in \cite{avila2009spectrum},\cite{avila2011density} to higher dimensions.

These ideas are partially inspired by the studies of monotonic cocycles \cite{avila2013monotonic}, higher dimensional cocycles in \cite{sadel2013herman} and \cite{avila2014complex}, the geometry of Hermitian symmetric space in \cite{clerc1998compressions} and \cite{freitasaction}. 

It is natural to ask whether the simplicity of Lyapunov spectrum holds for (Hermitian) symplectic cocycles in a dense set of some regularity set over general base dynamics. The difficulty to prove it is that when only some of the Lyapunov exponents coincides or vanishes we can not get too much dynamical information (for example, \textit{deterministic}, see \cite{kotani1988stochastic} or section \ref{density cont}) on the cocycles . It is also difficult to get the density of positive Lyapunov exponents for cocycles taking values in $SL(n,\FF)$ (except $SL(2,\RR)$), or even to get the simplicity of Lyapunov spectrum. The basic difficulty lies in the following, Kotani theory,  \cite{avila2011density} and our paper heavily depend on the fact that the associated groups act biholomorphically on some Hermitian symmetric spaces (see details in section \ref{hsspace}), which is not true for $SL(n,\FF)$ unless $SL(2,\RR)$. 

\subsection{The structure of the paper}

The outline of this paper is as follows: Chapter 2-6 are dedicated to the proof for Theorem \ref{mainresult}, in fact the proof can be easily adapted to prove the rest of the results in this paper.

Chapter 2 is a short introduction of the geometry of symplectic action on Siegel upper half plane and its boundary. 

Chapter 3 is dedicated to complexification of Lyapunov exponent and monotonicity of fibered rotation function, which implies an important equation in Lemma \ref{kotanimono}.

Chapter 4 is the proof for Theorem \ref{m+=m-}, which is the most difficult part in this paper. 

Chapter 5,6 are the rest of the proof of Theorem \ref{mainresult}, based on the arguments (deterministic, semi-continuity, finitely-many-valued cocycles, local regularization formula, etc.) for Schr\"odinger  and $SL(2,\RR)$ cocycles in \cite{avila2005generic},\cite{avila2011density},\cite{kotani1989jacobi},\cite{simon1983kotani},\cite{kotani1988stochastic}.

Chapter 7 shows how to adapt the proof of Theorem \ref{mainresult} to Hermitian symplectic group case, pseudo unitary group case and Schr\"odinger operator on the strip.

The appendix is for some properties of generic  periodic (Hermitian) symplectic cocycles which are used in the proof of Theorem \ref{generic+}.

\subsection*{Acknowledgement}
I would like to express my thanks to my director of thesis, Professor Artur Avila, for his supervision and useful conversations. I would like to thank Xiaochuan Liu for reading the first version of this paper, Qi Zhou for some useful suggestions for several versions. This research was partially conducted during the period when the author visited IMPA, supported by r\'eseau franco-br\'esilien en math\'ematiques.

\section{Geometry of the symplectic group action}
\subsection{Hermitian symmetric space, Bergman Shilov boundary and some notations }\label{hsspace}
Basically speaking, Kotani theory, techniques of monotonic cocycles and Avila's density result of Sch\"odinger and $SL(2,\RR)$-cocycles heavily depend on the fact that the group $SL(2,\RR)$ (or $SU(1,1)$) acts biholomorphically on Poincar\'e upper half plane (or disc) in $\CC$. In particular, $SL(2,\RR)$ and $SU(1,1)$ preserve corresponding Poincar\'e metric.

The higher dimensional extension of Poincar\'e upper half plane (or disc) are Hermitian symmetric spaces of non compact type. A Hermitian symmetric space is a Hermitian manifold which at every point has an inversion symmetry preserving the Hermitian structure. Hermitian symmetric space appears in the theory of automorphic forms and group representations. An example is the Siegel upper half plane and its disc model, our proof heavily depends on the fact that symplectic group can act on it isometrically.

Bergman discovered that, different from the one variable case, for a large class of domain, a holomorphic function of several variables is completely determined by its values on a proper closed subset of the topological boundary of the domain. We call the minimal one the Bergman-Shilov boundary or Shilov boundary.

In this paper, Shilov boundaries of Hermitian symmetric spaces that we are interested in are the set of unitary symmetric matrix $U_{sym}(\CC^d)$ and unitary group $U(d)$ which can be identified with real or complex Lagrangian Grassmannian.

We consider the following notations which will be used later.
\begin{definition}For a pair of complex $d\times d$ matrices $M,N$, we denote $M>N$ if $(M-N)^\ast=M-N$ and $M-N$ is positive definite.
\end{definition}
\begin{definition}
	For a square matrix $M$, we denote $Im(M):=\frac{M-M^\ast}{2i}$. For a complex number $z$, 
	$\mathfrak{R}(z):=\frac{z+\overline{z}}{2}, \mathfrak{I}(z):=\frac{z-\overline{z}}{2}$.
\end{definition}
\begin{rema}
	Later in Chapter \ref{complexification arg} we will define $\mathfrak{R}, \mathfrak{I}$ for an element in a complex Lie algebra $\mathfrak{g}^\CC$ with its real form $\mathfrak{g}$, which coincides with the definition here when $\mathfrak{g}=\RR, \mathfrak{g}^\CC=\CC$. 
\end{rema}

\begin{definition}
	We denote by $\norm{\cdot}_{HS}$ the Hilbert-Schmidt norm of matrix.
\end{definition}

\subsection{The symplectic action on the models of Siegel upper half plane}
We consider Siegel upper half plane and its disc model, which are the generalization of Poincar\'e upper half plane and Poincar\'e disc.
\begin{definition}The Siegel upper half plane $SH_d$ is defined as the following:$$SH_d:=\{X+iY\in Sym_d\CC, X,Y\in Sym_d\RR, Y>0\}$$
\end{definition}

\begin{definition}We define $SD_d$ as the set $$\{Z\in Sym_d\CC,  I_d-Z\bar Z>0 \}$$ Notice that $SD_d$ is the set of complex $d\times d$ symmetric matrices with operator norm less than $1$.
\end{definition}

Now we consider the symplectic action on $SH_d$ and $SD_d$. 
\begin{lemma}The symplectic group acts on the Siegel upper half plane transitively by the generalized M\"obius transformations:
	$$M=\begin{pmatrix}A&B\\C&D\end{pmatrix}\in Sp(2d,\RR), Z\in SH_d, M\cdot Z:=(AZ+B)(CZ+D)^{-1}$$ The stabilizer of the point $i\cdot I_d\in SH_d$ is $SO(2d, \RR)\cap Sp(2d, \RR)$.
\end{lemma}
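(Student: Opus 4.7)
The plan is to verify the three assertions separately: (i) the action is well-defined, i.e.\ $CZ+D$ is invertible and $M\cdot Z$ belongs to $SH_d$; (ii) the $Sp(2d,\RR)$ orbit of $iI_d$ is all of $SH_d$; and (iii) the stabilizer of $iI_d$ is exactly $SO(2d,\RR)\cap Sp(2d,\RR)$. The main technical ingredient throughout is the set of block identities encoded in $M^T J M = J$, namely
\begin{equation*}
A^T D - C^T B = I_d,\qquad A^T C = C^T A,\qquad B^T D = D^T B,
\end{equation*}
together with the analogous identities for $MJM^T = J$.

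For (i), I would first show that $CZ+D$ is invertible when $Z\in SH_d$. Writing $Z = X+iY$ with $Y>0$, a direct computation using the identities above gives the fundamental relation
\begin{equation*}
(CZ+D)^*(AZ+B) - (AZ+B)^*(CZ+D) = Z-\overline{Z} = 2iY.
\end{equation*}
If $(CZ+D)v=0$ for some $v\in\CC^d$, then pairing the left hand side with $v$ on both sides forces $v^*Y v = 0$, hence $v=0$. The same identity then yields invertibility and, setting $W := (AZ+B)(CZ+D)^{-1}$, it rewrites as
\begin{equation*}
\frac{W - W^*}{2i} = (CZ+D)^{-*}\, Y \,(CZ+D)^{-1} > 0,
\end{equation*}
so $\operatorname{Im}W > 0$. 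Symmetry of $W$ follows from an entirely parallel computation of $(ZC^T + D^T)(AZ+B) - (ZA^T + B^T)(CZ+D)$, which vanishes by the same identities and the hypothesis $Z^T = Z$.

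For (ii), I would exhibit an explicit element carrying $iI_d$ to a given $Z = X + iY \in SH_d$. The block-triangular matrix
\begin{equation*}
M_Z := \begin{pmatrix} Y^{1/2} & X Y^{-1/2} \\ 0 & Y^{-1/2}\end{pmatrix}
\end{equation*}
is easily checked to be symplectic (the three block identities reduce to the symmetry of $X$ and $Y^{-1/2} X Y^{-1/2}$), and $M_Z\cdot(iI_d) = (iY^{1/2} + XY^{-1/2})Y^{1/2} = X + iY$. This proves transitivity.

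For (iii), the condition $M\cdot(iI_d) = iI_d$ reads $iA + B = -C + iD$, forcing $D = A$ and $C = -B$. Substituting $M = \begin{pmatrix} A & B \\ -B & A\end{pmatrix}$ into $M^T J M = J$ reduces the symplectic conditions to $A^T A + B^T B = I_d$ and $A^T B = B^T A$, which is exactly the statement that the complex matrix $A + iB$ is unitary under the standard identification $\RR^{2d}\cong\CC^d$. The same two identities also give $M^T M = I_{2d}$, and a positivity argument on $\det_\RR M = |\det_\CC(A+iB)|^2$ shows $\det M = 1$. Hence the stabilizer is $SO(2d,\RR)\cap Sp(2d,\RR)$, naturally identified with $U(d)$. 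I expect the only mildly delicate point to be the bookkeeping in step (i), in particular keeping track of conjugate transposes versus transposes when $A,B,C,D$ are real but $Z$ is complex; once the identity for $(CZ+D)^*(AZ+B)$ is in place, everything else is forced.
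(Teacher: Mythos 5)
Your proof is correct in substance and, unlike the paper, self-contained: the paper disposes of this lemma simply by citing \cite{freitasaction} and gives no argument, so what you have written is essentially the standard proof that the cited thesis would contain. Your key identity $(CZ+D)^{*}(AZ+B)-(AZ+B)^{*}(CZ+D)=2i\,\mathrm{Im}\,Z$ is the right engine: it gives injectivity (hence invertibility) of $CZ+D$, and after conjugating by $(CZ+D)^{-1}$ it gives $\mathrm{Im}\,W>0$; the transposed analogue, which vanishes identically, gives $W^{T}=W$; and the explicit upper-triangular element $M_{Z}$ settles transitivity, since a transitive orbit through $iI_{d}$ plus the group property gives transitivity on all of $SH_d$. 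The one point you should add concerns step (iii): as written you prove only the inclusion of the stabilizer into $SO(2d,\RR)\cap Sp(2d,\RR)$ (fixing $iI_{d}$ forces $D=A$, $C=-B$, then unitarity of $A+iB$, orthogonality of $M$, and $\det M=1$), but the lemma asserts equality, so you also need that every element of $SO(2d,\RR)\cap Sp(2d,\RR)$ fixes $iI_{d}$. This is one line: if $M\in Sp(2d,\RR)\cap SO(2d,\RR)$ then $M^{T}=M^{-1}$, so $M^{T}JM=J$ becomes $JM=MJ$, which forces the block form $M=\begin{pmatrix}A&B\\-B&A\end{pmatrix}$, and then $M\cdot(iI_{d})=(iA+B)(A-iB)^{-1}=i(A-iB)(A-iB)^{-1}=iI_{d}$. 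With that addition the stabilizer statement is exactly the lemma's, and the identification of the stabilizer with $U(d)$ that you mention comes along for free.
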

\begin{proof}See \cite{freitasaction}.
\end{proof}

Consider the Cayley element 
\begin{equation}\label{Cayley ele}
C:=\frac{1}{\sqrt{2}}\begin{pmatrix}I_d& -i\cdot I_d\\ I_d& i\cdot I_d\end{pmatrix}
\end{equation} then for all $2d\times 2d$ complex matrix $A$, we denote $\A:=CAC^{-1}$. We have:

\begin{lemma}\label{caylay}(1). The map  $A\mapsto \A$ is a Lie group isomorphism from $Sp(2d,\RR)$ to $U(d,d)\cap Sp(2d, \CC)$.
	
	(2). The group $U(d,d)\cap Sp(2d, \CC)$ acts on the set $SD_d$ transitively by the generalized M\"obius transformations: suppose $M=\begin{pmatrix}A&B\\C&D\end{pmatrix}\in U(d,d)\cap Sp(2d, \CC)$ then
	$$Z\in SD_d, M\cdot Z:=(AZ+B)(CZ+D)^{-1}$$
	(3).The Cayley element induces a fractional transformation identifying $SH_d$ with $SD_d$, i.e. for $Z\in SH_d, \Phi_C(Z):=(Z-i\cdot I_d)(Z+i\cdot I_d)^{-1}$, we have the following commutative diagram:
	$$\xymatrix{
		SH_d\ar[rr]^{A}\ar[d]_{\Phi_C} & & SH_d \ar[d]^{\Phi_C} \\
		SD_d\ar[rr]^{\A} & & SD_d
	}$$
\end{lemma}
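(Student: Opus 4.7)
The plan is to prove all three parts by direct matrix computation, anchored on two identities which are immediate from the explicit form \eqref{Cayley ele}:
\[
C^{T} J C \;=\; i\, J, \qquad C^{\ast} K C \;=\; -i\, J,
\]
where $K := \begin{pmatrix} I_d & 0 \\ 0 & -I_d \end{pmatrix}$ is the indefinite Hermitian form defining $U(d,d)$. Along the way one also checks, by the same direct calculation, that $C$ is unitary, that $\bar{C} = CK$, and consequently that $P := CKC^{-1} = \begin{pmatrix} 0 & I_d \\ I_d & 0\end{pmatrix}$ is the block-swap matrix. These three formulas encode essentially the whole lemma.

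For part (1), the map $A \mapsto \tilde{A} = CAC^{-1}$ is manifestly an injective Lie group homomorphism of $GL(2d,\CC)$ into itself, so the only issue is to identify its image on $Sp(2d,\RR)$. Substituting $A = C^{-1}\tilde{A}C$ into $A^{T} J A = J$ and using $C^T J C = iJ$ (which gives the derived identity $(C^{-1})^T J C^{-1} = -iJ$) shows that $A \in Sp(2d,\CC)$ iff $\tilde{A} \in Sp(2d,\CC)$. For real $A$ one has $A^\ast = A^T$, so a short computation using $C^{\ast} K C = -iJ$ together with $A^T J A = J$ yields $\tilde{A}^\ast K \tilde{A} = K$; hence $A \in Sp(2d,\RR)$ implies $\tilde{A} \in U(d,d) \cap Sp(2d,\CC)$. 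The converse --- that the reality of $A$ is forced by $\tilde{A} \in U(d,d) \cap Sp(2d,\CC)$ --- is the only mildly delicate step: combining $\tilde{A}^{-1} = -J\tilde{A}^T J$ (from symplecticity) with $\tilde{A}^{-1} = K\tilde{A}^\ast K$ (from the $U(d,d)$ condition), and using $KJ = P$ and $JK = -P$, one obtains $\bar{\tilde{A}} = P\tilde{A}P$. Since $P = CKC^{-1}$ and $\bar{C} = CK$, this is exactly equivalent to $\bar{A} = A$.

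I would handle part (3) next, since part (2) then follows almost for free. The commutativity in (3) is a direct substitution: expanding $\Phi_C(M\cdot Z) = (M\cdot Z - iI_d)(M\cdot Z + iI_d)^{-1}$ for $M \in Sp(2d,\RR)$ in block form and simplifying, one verifies $\Phi_C(M\cdot Z) = \tilde{M}\cdot \Phi_C(Z)$; this is the standard fact that matrix conjugation by $C$ realises the scalar M\"obius change of variables $\Phi_C$ on the associated symmetric space, and it can equivalently be checked on a generating set of $Sp(2d,\RR)$. Given (3), part (2) reduces to formal transport: well-definedness of the fractional action on $SD_d$ --- invertibility of the lower block of $M\cdot Z$ and the fact that the result lies again in $SD_d$ --- follows by transferring the corresponding statements on $SH_d$ through the bijection $\Phi_C\colon SH_d \to SD_d$ and the $Sp(2d,\RR)$ action on $SH_d$ provided by the preceding lemma; transitivity on $SD_d$ reduces in the same way to transitivity of $Sp(2d,\RR)$ on $SH_d$. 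The main obstacle is organisational rather than conceptual: once the two displayed identities for $C$ and the relation $\bar{C} = CK$ are recorded, the entire lemma is bookkeeping between $J$, $K$, $P$, transpose, and complex conjugation.
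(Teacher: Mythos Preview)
Your proposal is correct and considerably more explicit than the paper, which simply defers to Freitas's thesis (the paper's proof reads, in full, ``See \cite{freitasaction}''). Your route---recording the two intertwining identities $C^{T} J C = iJ$ and $C^{\ast} K C = -iJ$ together with $\bar C = CK$, then reducing everything to bookkeeping among $J$, $K$, and $P$---is the standard hands-on verification and has the virtue of being self-contained; the paper's citation buys brevity at the cost of sending the reader elsewhere. One small point you pass over: in deducing (2) from (3) by ``formal transport through the bijection $\Phi_C$'', you are tacitly using that $\Phi_C$ is a well-defined bijection $SH_d \to SD_d$, which is part of what (3) asserts but which your discussion of (3) does not address---you only verify the commutativity of the diagram. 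This is easily repaired (check directly that $Z + iI_d$ is invertible for $Z \in SH_d$, compute $\Phi_C(iI_d) = 0$, write down the inverse Cayley map $W \mapsto i(I_d+W)(I_d-W)^{-1}$ on $SD_d$, and verify it lands in $SH_d$), but as written your arguments for (2) and for the bijection in (3) lean on each other slightly.
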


\begin{proof}See \cite{freitasaction}.
\end{proof}

Now we define the projective model for $SH_d$ and $SD_d$. Consider the complex Grassmannian $G_{2d,d}\CC$, the set of all $d-$dimensional subspaces of $\CC^{2d}$, and let $M_{2d, d}(\CC)$ be the space of all full rank $2d\times d$ complex matrices and view the columns of these matrices as a basis of a subspace of $\CC^{2d}$.

If we consider the action of $GL(d,\CC)$ by right multiplication on $M_{2d, d}(\CC)$, then the Grassmannian is $$G_{2d,d}=M_{2d,d}(\CC)/GL(d,\CC)$$
For each $\begin{pmatrix}A\\ B\end{pmatrix}$, we use $\begin{bmatrix}A\\ B \end{bmatrix}$ to represent the class of $\begin{pmatrix}A\\ B\end{pmatrix}$.
The projective model $SPH_d$ of $SH_d$ will be the set of all classes that admit a representative of the type 
$$\begin{pmatrix} Z\\ I_d \end{pmatrix} \text{ with } Z\in Sym_d\CC, Im(Z)>0$$

The group action on $SPH_d$ is the left matrix multiplication  by a representative of the class:$$\begin{pmatrix}A&B\\C&D\end{pmatrix} \cdot\begin{bmatrix}Z\\ I
_d\end{bmatrix}= \begin{bmatrix}AZ+B\\CZ+D\end{bmatrix}=\begin{bmatrix}(AZ+B)(CZ+D)^{-1}\\I_d\end{bmatrix}$$

The map connecting $SH_d$ to $SPH_d$ is
\begin{eqnarray*}
	SH_d &\to& SPH_d\\
	Z &\mapsto& \begin{bmatrix}Z\\ I_d\end{bmatrix}
\end{eqnarray*}

Similarly we can define the projective model $SPD_d$ of the disc $SD_d$ as the set of classes in $M_{2d,d}(\CC)$ that admit a representative of the type:

$$\begin{pmatrix} Z\\ I_d \end{pmatrix} \text{ with } Z\in Sym_d\CC, \norm{Z}<1$$
The symplectic action on $SPD_d$ and the identification between $SPD_d$ and $SD_d$ can be defined similarly.

\subsection{The boundaries of different models}\label{sec bdry}
\subsubsection{Stratification of finite and infinite boundaries}
All the properties in this subsection can be found in section 3 of \cite{freitasaction}.

Consider the boundary of $SD_d$ in $Sym_d\CC$. $$\partial SD_d=\{Z^T=Z, \norm{Z}=1\}$$

The M\"obius transform is well-defined on $\partial SD_d$. Moreover, it has a stratification, the strata are, for $1\leq k\leq d,$ $$\partial_kSD_d=\{Z\in \partial SD_d:\text{rank}(I-Z\overline{Z})=d-k\}$$
In particular, $\partial_dSD_d=U_{sym}(\CC^d)=U_d\cap Sym_d\CC$, which is the \textit{Shilov boundary} of $SD_d$, and it is an orbit of $U(d,d)\cap Sp(2d,\CC)-$action.

We can also take the closure of the Siegel upper half plane in $Sym_d\CC$,
$$\overline{SH_d}=\{Z\in Sym_d\CC: \text{Im}(Z)\geq 0\}$$
and then map it to $\partial SD_d$ using the extensions of the map $\Phi_C, \Phi_C^{-1}$ defined in Lemma \ref{caylay}. Notice that $\Phi_C^{-1}$ is not defined on the set
$$\{Z\in \partial{SD_d}, 1\in \text{ the spectrum of } Z\}$$
We call this set the \textit{infinite boundary} and its complement in $\partial SD_d$ the \textit{finite boundary}. 

The finite boundary contains a part of every stratum. We have the following property:
the image of the finite part of the stratum $\partial SD_d$ under the extension of $\Phi_C^{-1}$ is $$\text{fin}(\partial_kSH_d)=\{Z\in Sym_d\CC: \text{Im}(Z)\geq 0, \text{rank}(\text{Im}(Z))=d-k\}$$

\subsubsection{An atlas of Shilov boundary}
Consider $\text{fin}(\partial_dSH_d)=Sym_d\RR$, then $\Phi_C$ restricted to $Sym_d\RR$ gives a chart of $$\{Z\in \partial_dSD_d, 1\notin \text{ the spectrum of } Z\}$$ 

Similarly, for an element $g\in SL(2,R), g=\begin{pmatrix}a&b\\c&d\end{pmatrix}$, composite with the Caylay element, we get a chart of 
a dense  subset of $\partial_dSD_d$:
\begin{eqnarray*}\Phi_{Cg}: Sym_d\RR&\to& \{Z\in \partial_dSD_d, \frac{a-ic}{a+ic}\notin \text{ the spectrum of } Z\}\\
	Z&\mapsto&((a-ic)Z+(b-id))((a+ic)Z+(b+id))^{-1}
\end{eqnarray*}
As a result, if we pick a sequence of $g_k$ such that $\frac{a_k-ic_k}{a_k+ic_k}$ take more than $d$ different values, then the family $\{\Phi_{Cg_k}: Sym_d\RR\to \partial_dSD_d\}$ give an atlas for $\partial_dSD_d=U_{sym}(\CC^d)$.

\subsection{Invariant cone field and partial order}\label{invariant cone field}
\subsubsection{Invariant cone field}
We construct an invariant cone field $\mathcal{C}$ on Shilov boundary. For $\text{fin}(\partial_dSH_d)=Sym_d\RR$, we consider a cone field $\{h: h\in TSym_d\RR, h>0\}$. Then using the tangent map of $\Phi_{Cg_k}$ defined in last subsection, we get a cone field $\mathcal{C}$ on $TU_{sym}(\CC^d)$.

It is easy to check that the cone field $\{h>0\}$ is invariant under symplectic action. Therefore the cone field $\mathcal{C}$ is well-defined and invariant under $U(d,d)\cap Sp(2d,\CC)-$action.

\subsubsection{Partial order defined on the universal covering space}\label{partial order}
For $U_{sym}(\CC^d)$, consider its universal covering space $\widehat{U_{sym}(\CC^d)}$, denote the covering map by $\Pi:\widehat{U_{sym}(\CC^d)}\to U_{sym}(\CC^d)$.  Then we can lift the cone field $\mathcal{C}$ to a cone field $\widehat{\mathcal{C}}$ on $\widehat{U_{sym}(\CC^d)}$, which is invariant under the lift of the $Sp(2d,\CC)\cap U(d,d)-$action.

Using $\hat{\mathcal{C}}$, we define a partial order $"<"$ on $\widehat{U_{sym}(\CC^d)}$; we say $\hat{Z_0}< \hat{Z_1}$, if there is an $C^1$ path $p:[0,1]\to \widehat{U_{sym}(\CC^d)}$ such that
\begin{equation}p(0)=\hat{Z_0}, p(1)=\hat{Z_1}, p'(t)\in \hat{\mathcal{C}}(p(t))\label{order}
\end{equation}

For the determinant function restricted on $U_{sym}(\CC^d)$, we pick a continuous lift $\hat{\det}: \widehat{U_{sym}(\CC^d)}\to \RR$, such that  

\begin{equation}\label{def hatdet}
\pi\circ\hat{\det}=\det\circ\Pi 
\end{equation}
where  \begin{equation}\label{rtos1}\pi:\RR\to\mathbb{S}^1, \pi(x)=e^{ix}
\end{equation}
To check $"<"$ is actually a partial order and for later use, we have
\begin{lemma}\label{order on covering}
	\begin{enumerate}
		\item Suppose $\hat{Z}\in \widehat{U_{sym}(\CC^d)}$,  for a path $p: [0,1]\to \widehat{U_{sym}(\CC^d)}$ such that $p(0)=\hat{Z}, p'(0)\in \widehat{\mathcal{C}}(Z)$, we have $$\frac{d}{dt}|_{t=0}\hat{\det}(p(t))>0$$
		
		\item The order $"<"$ defined in \eqref{order} is a strict partial order, i.e. there is no $\hat{Z},\hat{Z_1},\hat{Z_2}\in \widehat{U_{sym}(\CC^d)}$ such that $\hat{Z}<\hat{Z}$ and $\hat{Z_1}<\hat{Z_2}<\hat{Z_1}$.

		\item For any $Z\in U_{sym}(\CC^d)$, any continuous lift of the path $\theta\mapsto e^{2i\theta}Z$ is monotonic with respect to the order $"<"$.
		
		\item Any lift of the $Sp(2d,\CC)\cap U(d,d)-$action preserves the order $"<"$.
	\end{enumerate}
\end{lemma}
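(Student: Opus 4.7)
\textbf{Proof plan for Lemma \ref{order on covering}.}

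The four assertions reduce to one key fact: along every $C^1$ path $p$ in $\widehat{U_{sym}(\CC^d)}$ with $p'(t) \in \widehat{\mathcal C}(p(t))$, the function $\hat{\det}$ is strictly increasing. The remaining parts follow from this by soft arguments, so I concentrate on (1).

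For (1), the plan is to pass to an intrinsic parameterization of $T_Z U_{sym}(\CC^d)$ and identify both the cone and the derivative of $\det$ in that model. Differentiating $Z \bar Z = I_d$ together with $Z = Z^T$ shows that every $V \in T_Z U_{sym}(\CC^d)$ has the form $V = iHZ$ for some $H \in Her(d)$ satisfying $HZ = Z\bar H$. Using the identity $(X+iI_d)^{-1} = -\tfrac{i}{2}(I_d - Z)$, a short calculation in the Cayley chart $\Phi_C$ gives $d\Phi_C(X)(h) = -\tfrac{i}{2}(I_d - Z)\, h\,(I_d - Z)$, so in the parameterization $V = iHZ$ this corresponds to $H = \tfrac{1}{2}(I_d - Z)\, h\,(I_d - Z)^*$, which is manifestly positive definite Hermitian whenever $h > 0$. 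Since the cone field $\mathcal C$ is invariant under the transitive $U(d,d) \cap Sp(2d,\CC)$-action (Section \ref{invariant cone field}), the characterization $\mathcal C(Z) = \{iHZ : H > 0\}$ therefore holds at every $Z \in U_{sym}(\CC^d)$. The chain rule combined with $Z^{-1} = \bar Z$ and cyclicity of the trace then yields
\[
\frac{d}{dt}\log\det Z(t)\Big|_0 = \text{tr}(\bar Z \cdot iHZ) = i\,\text{tr}(H);
\]
writing locally $\det Z(t) = e^{i\alpha(t)}$ and recalling \eqref{def hatdet}, this gives $\frac{d}{dt}\hat{\det}(p(t))|_0 = \alpha'(0) = \text{tr}(H) > 0$ for any cone direction.

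Given (1), the remaining items are short. For (2), a witness to $\hat Z < \hat Z$, or by concatenation to $\hat Z_1 < \hat Z_2 < \hat Z_1$, would be a $C^1$ loop with tangent in $\widehat{\mathcal C}$; but then $\hat{\det}$ would strictly increase along it while having equal endpoints, a contradiction. For (3), the tangent of $\theta \mapsto e^{2i\theta}Z$ at $\theta$ is $2i e^{2i\theta} Z = iH(e^{2i\theta}Z)$ with $H = 2I_d > 0$, which trivially satisfies the compatibility condition with $e^{2i\theta}Z$; thus the tangent lies in $\mathcal C$ at every point of the path, and monotonicity of any continuous lift then follows from (1) applied on every subinterval. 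For (4), any lift $\tilde g$ of $g \in U(d,d) \cap Sp(2d,\CC)$ has differential covering $dg$ under $\Pi$; since $dg$ preserves $\mathcal C$, $d\tilde g$ preserves $\widehat{\mathcal C}$, so $\tilde g$ carries paths tangent to $\widehat{\mathcal C}$ to paths of the same kind, preserving the order.

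The one nontrivial step is the cone identification $\mathcal C(Z) = \{iHZ : H > 0\}$: translating the positivity of $h \in Sym_d\RR$ across the Cayley chart into the Hermitian positivity of the corresponding $H$, via a matrix congruence by $I_d - Z$. Once this translation is in place, the existence of the strictly monotone potential $\hat{\det}$ makes the remaining items essentially formal.
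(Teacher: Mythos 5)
Your argument is essentially the paper's, in slightly more intrinsic clothing: you compute the cone through the Cayley chart, recognize the image of $h>0$ as $V=iHZ$ with $H=\tfrac12(I_d-Z)h(I_d-Z)^{\ast}>0$, and get positivity of $\tfrac{d}{dt}\hat{\det}$ from $\mathrm{tr}(Z^{-1}\dot Z)=i\,\mathrm{tr}(H)$ — this is the same computation as the paper's expansion $\det(Z)\det(1+it(1-Z^{\ast})h(1-Z)+o(t))$, and your treatment of (2), (3), (4) matches the paper's (for (3) the paper exhibits $h=-Z(1-Z)^{-2}$ with $-i(Z-1)h(Z-1)=iZ$, which is exactly your statement $H=2I_d$ read back in the chart).

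The one step whose justification is incomplete is the extension of the characterization $\mathcal{C}(Z)=\{iHZ: H>0\}$ to points with $1\in\mathrm{spec}(Z)$ (you need it there for (1) at every point and for (3) along the whole circle $e^{2i\theta}Z$). Invariance of the cone field plus transitivity only gives $\mathcal{C}(g\cdot Z)=dg_Z(\mathcal{C}(Z))$; to identify this pushforward with $\{iH'(g\cdot Z):H'>0\}$ you must additionally check that the model cone family is itself equivariant, and that is a genuine computation, not a formal consequence of the invariance you cite. It does hold: writing $g=\begin{pmatrix}A&B\\C&D\end{pmatrix}\in U(d,d)\cap Sp(2d,\CC)$, Lemma \ref{caylay} gives $\overline{C}=B$ and $\overline{D}=A$, hence $\overline{(CZ+D)}\,Z=AZ+B$, and combining this with $dg_Z(V)=(CZ+D)^{-T}V(CZ+D)^{-1}$ (the computation of Lemma \ref{idty sp}) one finds $H'=\overline{(CZ+D)}^{-\ast}\,H\,\overline{(CZ+D)}^{-1}$, a congruence of $H$, so $H'>0$. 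Alternatively you can do what the paper does and redo the chart computation in the charts $\Phi_{Cg_k}$, which cover the locus $1\in\mathrm{spec}(Z)$. Either route closes the gap; as written, the sentence ``since $\mathcal{C}$ is invariant under the transitive action, the characterization holds at every $Z$'' does not.
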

\begin{proof}
	Suppose $1\notin \text{ the spectrum of }Z=\Pi(\hat{Z})$, by computation for $D\Pi(p'(0)):=H\in \cal{C}(Z)$, there is an $h\in Sym_d\RR, h>0 $ such that $H=-i(Z-1)h(Z-1)$. Then 
	\begin{eqnarray*}\det(\Pi(p(t)))&=&\det(Z+tH+o(t))\\
		&=&\det(Z-it(Z-1)h(Z-1)+o(t))\\
		&=&\det(Z)\det(1-it(1-Z^\ast)h(Z-1)+o(t))\\
		&&(\text{ }Z\text{ is a unitary matrix.})\\
		&=&\det(Z)\det(1+it(1-Z^\ast)h(1-Z)+o(t))
	\end{eqnarray*}
	notice that $(1-Z^\ast)h(1-Z)$ is positive definite, lift to the covering space we have $\frac{d}{dt}|_{t=0}\hat{\det}(p(t))>0$. 
	
	In the case $1\in \text{ the spectrum of }Z$, we can get other expression of the tangent vectors in $\cal{C}(Z)$ by $\Phi_{Cg_k}$, and the proof is similar. In summary we get the proof of (1). 
	
	As a corollary, we have 
	\begin{equation}\text{if } \hat{Z_1}<\hat{Z_2} \text { then }\hat{\det}(\hat{Z_1})<\hat{\det}(\hat{Z_2})\label{det mono}
	\end{equation} 
	which implies (2).
	
	For (3). by taking the derivative, we need to prove $iZ\in \cal{C}(Z)$. We only prove it in the case $1\notin \text{ the spectrum of }Z$, for other cases the proof is similar. 
	
	By computation, $\cal{C}(Z)=\{-i(Z-1)h(Z-1), h>0, h\in Sym_d\RR\}$. Take $h=-Z(1-Z)^{-2}$, then it can be checked that $h\in Sym_d\RR, h>0$, and $-i(Z-1)h(Z-1)=iZ$, which implies $iZ\in \cal{C}(Z)$.
	
	(4). is the corollary of invariance of the cone field under the lift of the $Sp(2d,\CC)\cap U(d,d)-$action.
\end{proof}
\subsection{Bergman metric and the volume form on $SD_d$.}
In this section, we define the Bergman metric on $SD_d$ which is a generalization of Poincar\'e metric on the Poincar\'e disc. (see \cite{krantz1992functions} for example) In particular, the symplectic group action preserves the Bergman metric.

\begin{definition}Let $D$ be a bounded domain of $\CC^n$, $d\lambda$ be the Lebesgue measure on $\CC^n$, let $L^2(D)$ be the Hilbert space of square integrable functions on $D$, and let $L^{2,h}(D)$ denote the subspace consisting of holomorphic functions in $D$, the $L^{2,h}(D)$ is closed in $L^2(D)$. 
	
	For every $z\in D$, the evaluation $ev_z: f\mapsto f(z)$ is a continuous linear functional on $L^{2,h}(D)$. By the Riesz representation theorem, there is a function $\eta_z(\cdot)\in L^{2,h}(D)$ such that $$ev_z(f)=\int_Df(\zeta)\overline{\eta_z(\zeta)}d\lambda(\zeta)$$
	The Bergman kernel $K$ is defined by $K(z,\zeta)=\overline{\eta_z(\zeta)}$.
\end{definition}

\begin{definition}Let $D\subset \CC^n$ be a domain and let $K(z,w)$ be the Bergman kernel on $D$, consider a Hermitian metric on the tangent bundle of $T_z\CC^n$ by $$g_{ij}(z):=\frac{\partial^2}{\partial z_i\partial \bar z_j}\log K(z,z)$$for $z\in D$. Then the length of a tangent vector $\xi\in T_z\CC^n$ is given by $$\norm{\xi}_{B,z}:=\sqrt{\sum_{i,j=1}^n g_{ij}(z)\xi\bar\xi_j}$$ This metric is called Bergman metric on $D$.
\end{definition}

We denoted by $d$ the Bergman metric on $SD_d$ . We have the following lemma for $d$.
\begin{lemma}\label{schw}(1).For $A\in Sp(2d,\RR), Z_1,Z_2\in SD_d$, $$d(\A Z_1,\A Z_2)=d(Z_1,Z_2).$$
	(2).For $t\in (0,1)$, $t\cdot SD_d:=\{tZ, Z\in SD_d \}$ is a bounded precompact set under metric $d$. And we have 
	$$d(tZ_1, tZ_2)\leq td(Z_1,Z_2)$$
\end{lemma}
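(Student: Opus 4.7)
The plan is to prove the two parts separately, both relying on the explicit form of the Bergman kernel of $SD_d$.

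For part (1), I would invoke the biholomorphic invariance of the Bergman metric. Recall that under any biholomorphism $f$ the Bergman kernel transforms as $K(f(z),f(w))\,\det Jf(z)\,\overline{\det Jf(w)} = K(z,w)$, and hence the metric tensor $g_{ij}(z) = \partial^2 \log K(z,z)/\partial z_i \partial \bar z_j$ is preserved. By Lemma \ref{caylay}, the map $A \mapsto \A$ identifies $Sp(2d,\RR)$ with $U(d,d)\cap Sp(2d,\CC)$, which acts on $SD_d$ by generalized M\"obius transformations, and such transformations are restrictions to $SD_d$ of biholomorphisms of a neighborhood of $\overline{SD_d}$. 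Hence the action is an isometry of the Bergman metric, giving (1).

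For part (2), the map $\phi_t\colon Z\mapsto tZ$ sends $SD_d$ into itself: if $Z\in SD_d$ then $\norm{tZ}_{op} = t\norm{Z}_{op} < t < 1$. Its image is contained in the operator-norm closed ball of radius $t$, which is Euclidean-compact and lies at positive Euclidean distance from $\partial SD_d$. Since the Bergman metric and the Euclidean metric define the same topology on any compact subset of $SD_d$, the set $t\cdot SD_d$ is precompact with respect to $d$. For the contraction, I use the classical formula that the Bergman kernel of $SD_d$ is a positive constant multiple of $\det(I-Z\bar Z)^{-(d+1)}$. Differentiating its logarithm twice yields, up to a positive multiplicative constant independent of $Z,V$,
\begin{equation*}
\norm{V}_{B,Z}^2 \;=\; \operatorname{tr}\bigl((I-Z\bar Z)^{-1}\,V\,(I-\bar Z Z)^{-1}\,\bar V\bigr), \qquad V\in T_Z SD_d.
\end{equation*}
Since $D\phi_t$ sends $V$ to $tV$ and the base point to $tZ$, pushing forward produces an overall factor $t^2$ and replaces the two matrices by $(I-t^2 Z\bar Z)^{-1}$ and $(I-t^2\bar Z Z)^{-1}$.

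The key inequality is then operator monotonicity of inversion: from $I-t^2 Z\bar Z \geq I - Z\bar Z > 0$ we obtain $(I-t^2 Z\bar Z)^{-1} \leq (I-Z\bar Z)^{-1}$, and symmetrically on the right. Writing the trace as a Hilbert-Schmidt norm, $\operatorname{tr}(A V B \bar V) = \norm{A^{1/2} V B^{1/2}}_{HS}^2$ with $A,B$ positive, these inequalities yield the pointwise bound $\norm{D\phi_t(V)}_{B,tZ} \leq t\,\norm{V}_{B,Z}$. Integrating along any piecewise $C^1$ path joining $Z_1$ to $Z_2$ in $SD_d$ and taking the infimum produces the global estimate $d(tZ_1,tZ_2) \leq t\,d(Z_1,Z_2)$.

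The main technical point is not conceptual but notational: one must be careful that the formula for the Bergman kernel of $SD_d$ is used in the correct normalization (most cleanly by quoting Hua's classical computation for the classical domain of type III), and that the two-sided operator-monotonicity argument is written in a form that accommodates complex symmetric $V$ rather than Hermitian $V$. Everything else in the argument — the transitivity and biholomorphy of the action for (1), and the path-integration for (2) — is formal once the infinitesimal comparison is established.
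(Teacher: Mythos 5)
Your proposal is correct. For part (1) you argue exactly as the paper does: the $U(d,d)\cap Sp(2d,\CC)$ M\"obius action is by biholomorphisms of $SD_d$ (via Lemma \ref{caylay}), and the Bergman metric is invariant under biholomorphisms. For part (2) the paper gives no argument of its own but cites Lemma 6 of \cite{clerc1998compressions}, whereas you re-derive the contraction directly: Hua's formula $K(Z,Z)=c\,\det(I-Z\bar Z)^{-(d+1)}$ for the type III domain, the resulting metric $\norm{V}_{B,Z}^2=c'\operatorname{tr}\bigl((I-Z\bar Z)^{-1}V(I-\bar Z Z)^{-1}\bar V\bigr)$, the two-sided comparison $(I-t^2Z\bar Z)^{-1}\leq (I-Z\bar Z)^{-1}$ (legitimate since $Z\bar Z=ZZ^\ast\geq 0$ and, for symmetric $V$, $\bar V=V^\ast$, so each trace-monotonicity step applies to a positive factor), and integration along paths. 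This is sound, self-contained, and has the incidental benefit of producing the same exponent $d+1$ that the paper uses later in Lemma \ref{Berg vol}. What the paper's citation buys instead is generality: Clerc's lemma holds for compressions of arbitrary bounded symmetric domains, which is precisely what Chapter 7 invokes to transfer Lemma \ref{schw} to the $SU(d,d)$/$HSp(2d)$ disc $\{Z: I_d-Z^\ast Z>0\}$; with your route one would have to redo the computation there with kernel exponent $2d$ — routine, but an extra step your argument as written does not cover.
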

\begin{proof}(1) is the basic property of Bergman metric, i.e. Bergman metric is invariant under bi-holomorphic map.(2) see Lemma 6 of \cite{clerc1998compressions}.
\end{proof}
We give a explicit formula of Lebesgue density $d\lambda$ on $Sym_d\CC$.  Let $e_{ij}$ denote the matrix such that all the entries are $0$ except the one in $i-$th row and $j-$th column is $1$. Let $E_{ii}=e_{ii}$ and $E_{ij}(i\neq j)=e_{ij}+e_{ji}$, then $E_{ij}, i\leq j$ form a basis of $Sym_d\CC$. 

Then we can define the Lebesgue density on $Sym_d\CC$, i.e. 
\begin{equation}\label{leb on sym}
|dE_{11}\wedge d\bar{E}_{11}\wedge\cdots\wedge dE_{ij}\wedge d\bar{E}_{ij}\wedge\cdots \wedge dE_{dd}\wedge d\bar{E}_{dd}|, i\leq j
\end{equation}

For $Z\in SD_d$, let $V(Z)d\lambda(Z)$ be a volume form on $SD_d$ induced by the Bergman metric on point $Z$. Without loss of generality, we can assume $V(0)=1$. Then we have:

\begin{lemma}\label{Berg vol}
	If $\sigma_i(Z), 1\leq i\leq d$ are the singular values of $Z$, then $$V(Z)=\Pi_{1\leq i\leq d}(1-\sigma_i(Z)^2)^{-(d+1)}$$
\end{lemma}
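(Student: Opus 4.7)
The plan is to exploit the transitive isometric action of $Sp(2d,\CC)\cap U(d,d)$ on $SD_d$ (Lemma~\ref{caylay} together with Lemma~\ref{schw}(1)) to reduce the problem to a single explicit Jacobian computation at the origin. Since the Bergman volume form is invariant under every biholomorphism $g$ in this group, we obtain the transformation rule $V(g\cdot Z)\,|J_g(Z)|^2=V(Z)$, where $J_g$ denotes the holomorphic Jacobian of $g$ viewed as a self-map of $Sym_d\CC$; together with the normalization $V(0)=1$, this determines $V$ completely once we exhibit a group element carrying $0$ to the target point and compute its Jacobian at $0$.

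First I would apply the Autonne--Takagi factorization to write any $Z\in SD_d$ as $Z=U^T\Sigma U$ with $U\in U(d)$ and $\Sigma=\diag(\sigma_1(Z),\dots,\sigma_d(Z))$. The map $Z\mapsto U^T Z U$ is the M\"obius action of $M_U=\begin{pmatrix}U^T&0\\ 0&U^*\end{pmatrix}$, which a direct check places in $Sp(2d,\CC)\cap U(d,d)$. Its differential $H\mapsto U^T H U$ on $Sym_d\CC$ has complex-linear determinant $(\det U)^{d+1}$ by the standard symmetric-square computation, hence modulus $1$ for unitary $U$. Invariance then gives $V(Z)=V(\Sigma)$, reducing the problem to real diagonal matrices.

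Next, I would introduce the ``translation'' element $M_\Sigma=\begin{pmatrix}T&\Sigma T\\ \Sigma T&T\end{pmatrix}$ with $T=(I-\Sigma^2)^{-1/2}$. A short verification shows $M_\Sigma^T J M_\Sigma=J$ and $M_\Sigma^*\diag(I_d,-I_d)M_\Sigma=\diag(I_d,-I_d)$, so $M_\Sigma\in Sp(2d,\CC)\cap U(d,d)$, and $M_\Sigma\cdot 0=(\Sigma T)T^{-1}=\Sigma$. Applying the product rule to $g(Z)=(AZ+B)(CZ+D)^{-1}$ at $Z=0$ gives $dg|_0\cdot H=(A-BD^{-1}C)HD^{-1}$; for $M_\Sigma$ this simplifies to $H\mapsto(I-\Sigma^2)^{1/2}H(I-\Sigma^2)^{1/2}$, which on the holomorphic coordinates $(h_{ij})_{i\le j}$ of $Sym_d\CC$ scales by $\tau_i\tau_j$ with $\tau_i:=\sqrt{1-\sigma_i^2}$. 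Combining the transformation rule with $V(0)=1$ then yields $V(\Sigma)=\prod_i(1-\sigma_i^2)^{-(d+1)}$, and hence the desired formula for $V(Z)$.

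The main (albeit elementary) technical step is the symmetric-square exponent count giving the complex-linear determinant of $H\mapsto(I-\Sigma^2)^{1/2}H(I-\Sigma^2)^{1/2}$ on $Sym_d\CC$: for each index $i$ the factor $\tau_i$ appears with multiplicity $2$ from the diagonal coordinate $h_{ii}$ and multiplicity $1$ from each of the $d-1$ off-diagonal coordinates containing $i$, totalling $d+1$. Thus the holomorphic Jacobian equals $\prod_i\tau_i^{d+1}$, and the real Jacobian with respect to the Lebesgue density \eqref{leb on sym} is $\prod_i(1-\sigma_i^2)^{d+1}$, closing the computation.
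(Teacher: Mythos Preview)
Your argument is correct. The paper does not actually give a self-contained proof of this lemma: it simply cites Lemma~6 of \cite{clerc1998compressions}, where the Bergman volume density is obtained as part of a general computation for bounded symmetric domains. Your route is more elementary and tailored to $SD_d$: you use only the invariance of the Bergman volume form under $Sp(2d,\CC)\cap U(d,d)$ (Lemma~\ref{schw}(1)), the Autonne--Takagi factorisation to reduce to a real diagonal $\Sigma$, and an explicit group element $M_\Sigma$ sending $0$ to $\Sigma$ whose Jacobian at $0$ you evaluate by the symmetric-square exponent count. The verifications that $M_U,M_\Sigma\in Sp(2d,\CC)\cap U(d,d)$, that $dg|_0\cdot H=(I-\Sigma^2)^{1/2}H(I-\Sigma^2)^{1/2}$, and that the holomorphic determinant of $H\mapsto A H A^T$ on $Sym_d\CC$ equals $(\det A)^{d+1}$ are all routine and go through as you describe. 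Your approach is in the same spirit as the direct Jacobian computation the paper carries out later in Lemma~\ref{jac leb}, so it meshes well with the surrounding arguments and has the advantage of being self-contained.
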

\begin{proof}see Lemma 6 in \cite{clerc1998compressions} for a computation of Riemann tensor for Bergman metrics for general Hermitian symmetric space.
\end{proof}
\section{Fibered rotation function and complexification of Lyapunov exponents}\label{rot mon}

Let us now fix $A\in L^\infty(X, Sp(2d,\RR))$. For $\sigma \in \RR, t\geq 0$, $\sigma+it\in \CC^+\cup \RR$, we consider the following deformation of $A$:

$$A_{\sigma+it}:= \begin{pmatrix}\cos(\sigma+it)\cdot I_d& \sin(\sigma+it)\cdot I_d\\-\sin(\sigma+it)\cdot I_d&\cos(\sigma+it)\cdot I_d\end{pmatrix}\cdot A$$

Notice that $\A_{\sigma+it}=\begin{pmatrix}e^{-t}&\\&e^t\end{pmatrix}\begin{pmatrix}e^{i\sigma}&\\&e^{-i\sigma}\end{pmatrix}\A$

The main aim of this chapter is the following theorem, which gives the complexification of $L^d(A)$:

\begin{thm}\label{rot num}
	There is a function $\zeta$ defined on $\CC^+\cup \RR$ satisfying the following properties:
	\begin{eqnarray*}
		&1.&\zeta\text{ is a holomorphic on }\CC^+.\\
		&2.&\zeta\text{'s real part } \rho \text{ is continuous on }\CC^+\cup \RR, \text{ non-increasing on } \RR.\\
		&3.&-\zeta\text{'s imaginary part }=L^d(A_{\sigma+it}), \text{ which is subharmonic on }\CC^+\cup \RR.
	\end{eqnarray*}
\end{thm}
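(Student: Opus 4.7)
The plan is to mimic Avila's $SL(2,\RR)$-complexification from \cite{avila2011density}, replacing the Poincar\'e disc and its circle boundary by the Siegel disc $SD_d$ and its Shilov boundary $U_{sym}(\CC^d)$. The decisive algebraic identity is the factorization
$$\A_{\sigma+it}=\begin{pmatrix}e^{-t}I_d & 0\\ 0 & e^{t}I_d\end{pmatrix}\begin{pmatrix}e^{i\sigma}I_d & 0\\ 0 & e^{-i\sigma}I_d\end{pmatrix}\A,$$
whose first factor acts on $SD_d$ as the dilation $Z\mapsto e^{-2t}Z$, a strict holomorphic contraction for $t>0$ by Lemma \ref{schw}(2), and whose second acts as the M\"obius rotation $Z\mapsto e^{2i\sigma}Z$, preserving $\partial SD_d$. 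Thus on $\CC^+$ the complexified cocycle $(f,\A_{\sigma+it})$ uniformly contracts the Siegel disc bundle $X\times SD_d$, while on $\RR$ it merely preserves the Shilov boundary $U_{sym}(\CC^d)$ and admits a fibered rotation number in the sense of Section \ref{invariant cone field}. Accordingly I would build $\zeta$ separately on $\CC^+$ and on $\RR$, and glue the two pieces at the end.

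On $\CC^+$: the uniform Bergman contraction produces, by a standard graph-transform argument, a unique measurable section $m^+(x,\sigma+it)\in SD_d$ satisfying $\A_{\sigma+it}(x)\cdot m^+(x,\sigma+it)=m^+(f(x),\sigma+it)$, and holomorphic dependence of attracting fixed points on parameters makes $m^+$ holomorphic in $\sigma+it$. I then set
$$\zeta(\sigma+it):=\frac{1}{2d}\int_X\log\det\bigl(D(\A_{\sigma+it}(x))_{m^+(x,\sigma+it)}\bigr)\,d\mu(x),$$
where the differential is that of the M\"obius action on $SD_d$ at $m^+$, and the branch of $\log\det$ is fixed through the lift $\hat{\det}$ of Section \ref{partial order}. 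Holomorphy of $\zeta$ on $\CC^+$ is immediate from these ingredients together with the uniform $L^\infty$-bounds on $A$. The identity $-\Im\zeta(\sigma+it)=L^d(A_{\sigma+it})$ follows from a Jacobian computation: the graph of $m^+$ is an invariant Lagrangian subbundle on which $\A_{\sigma+it}$ acts as a contraction whose logarithmic volume rate is the sum of the $d$ smallest Lyapunov exponents of $A_{\sigma+it}$, and by the symplectic symmetry $L_j(A)+L_{2d+1-j}(A)=0$ this sum equals $-L^d(A_{\sigma+it})$. Subharmonicity of $L^d(A_{\sigma+it})$ on the closed half-plane is then standard, since each approximand $\tfrac1n\int\log\|\Lambda^d A_{\sigma+it}^n(x)\|\,d\mu(x)$ is subharmonic in $\sigma+it$ (entries are holomorphic in $e^{\pm i(\sigma+it)}$, uniformly $L^\infty$-bounded) and subharmonicity passes to monotone decreasing limits.

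On $\RR$: I would define $\rho(\sigma):=\Re\zeta(\sigma)$ as a fibered translation number on $\widehat{U_{sym}(\CC^d)}$. Fix a lift $\hat Z$, let $p_n(x,\sigma)$ be its image under any continuous lift of the action of $\A_\sigma^n(x)$ on $U_{sym}(\CC^d)$, and put
$$\rho(\sigma):=\frac{1}{2d}\lim_{n\to\infty}\frac{1}{n}\int_X\bigl(\hat{\det}(p_n(x,\sigma))-\hat{\det}(\hat Z)\bigr)\,d\mu(x).$$
Existence of the limit, independence of $\hat Z$, and continuity in $\sigma$ follow from the invariance and strict monotonicity properties of $\hat{\mathcal C}$ given by Lemma \ref{order on covering}; non-increase of $\rho$ in $\sigma$ is then immediate from parts (3)--(4) of that lemma, since post-composing with the rotation $Z\mapsto e^{2i\sigma}Z$ shifts $\hat{\det}$ in a fixed monotone direction while the cocycle part preserves the order. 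The gluing $\Re\zeta(\sigma+it)\to\rho(\sigma)$ as $t\downarrow 0$ would then be obtained from a Poisson-type recovery of $\Re\zeta$ from its subharmonic imaginary part $-L^d$, combined with the observation that $m^+(\cdot,\sigma+it)$ converges in measure to a Lagrangian section valued in the Shilov boundary (forced by $\hat{\mathcal C}$-monotonicity).

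The main obstacle will be the boundary transition $t\downarrow 0$: for $t>0$ the dynamics singles out a unique attracting Lagrangian $m^+$ and an honest holomorphic jacobian on $SD_d$, but at $t=0$ the action on $\partial SD_d$ may admit several invariant measurable sections and the jacobian degenerates, so the rotation must be re-read as an asymptotic translation on the universal cover $\widehat{U_{sym}(\CC^d)}$. Ensuring that the interior formula for $\zeta$ and the boundary formula for $\rho$ glue continuously into a single function satisfying all three required properties is the delicate step, and is precisely what the invariant-cone-field framework of Section \ref{invariant cone field} and Lemma \ref{order on covering} has been set up to handle.
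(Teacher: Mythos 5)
Your interior construction (the attracting section $m^+$ for $t>0$, the Jacobian/$\det\tau$ identity giving $-\Im\zeta=L^d$, subharmonicity by decreasing limits, and monotonicity on $\RR$ via the cone field of Lemma \ref{order on covering}) follows the same philosophy as the paper, but the way you assemble $\zeta$ has a genuine gap at the decisive point: the continuity of $\rho=\Re\zeta$ on $\CC^+\cup\RR$. You define $\Re\zeta$ in the interior by one formula (a branch of $\int\log\det$ of the Möbius differential at $m^+$) and $\rho$ on $\RR$ by a different one (an asymptotic translation number of $\hat{\det}$ on $\widehat{U_{sym}(\CC^d)}$), and propose to glue them by a \lq\lq Poisson-type recovery\rq\rq\ of $\Re\zeta$ from $-L^d$ together with convergence of $m^+(\cdot,\sigma+it)$ to a Shilov-boundary section as $t\downarrow 0$. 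Neither tool works: a harmonic conjugate is not recoverable continuously up to the boundary from boundary behavior of the imaginary part alone, and the boundary convergence of $m^+$ is false in general -- it is precisely the kind of statement that requires the Kotani-theoretic analysis of Theorem \ref{m+=m-}, and even there one only gets a.e.\ $\liminf$/$L^2$-type information at parameters with vanishing exponent. So as written the delicate step you yourself identify is not closed.

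The paper avoids the gluing problem altogether by giving a single definition of the real part valid on all of $\CC^+\cup\RR$: it lifts $A\mapsto\tau_A(Z)=CZ+D$ to the universal cover of the semigroup $\Upsilon$, forms the orbit-averaged increments $\mathfrak{R}\delta_z\xi_n(x,Z_0,Z_1)$ relative to the undeformed cocycle, and proves (Lemma \ref{indep z}) that $\int_X\mathfrak{R}\delta_z\xi_n\,d\mu$ converges uniformly on bounded subsets of $\CC^+\cup\RR$, with limit independent of $Z_0,Z_1$; the engine is the a priori bound $|\mathfrak{R}\hat{\tau}(\hat A,Z)-\mathfrak{R}\hat{\tau}(\hat A,Z')|<d\pi$ of Lemma \ref{dpi} fed into an almost-superadditivity estimate. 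Continuity of $\rho$ up to $\RR$ is then automatic, holomorphy on $\CC^+$ comes from replacing $Z_0$ by the holomorphic section $m^+$ and applying Montel, and the identification of the boundary values with the $\hat{\det}$-monotone quantity (your translation number) is supplied by Lemma \ref{geometric meaning of tau hat}, $\det(\A\cdot Z)=e^{-2i\mathfrak{R}\hat{\tau}(\hat A,Z)}\det(Z)$, which you would also need but do not prove. Your outline contains no substitute for the uniform $d\pi$ estimate, without which the existence, base-point independence and continuity of your boundary $\rho$ are unjustified as well. Two smaller points: your branch-fixing via $\hat{\det}$ does not make sense at interior points ($\hat{\det}$ lives on the Shilov boundary, and the branch of $\arg\det\tau$ must be chosen coherently along orbits, which is what the lift $\hat\Upsilon$ accomplishes), and your normalization is off (the complex Jacobian of $H\mapsto\tau^{-T}H\tau^{-1}$ on $Sym_d\CC$ involves the exponent $d+1$, not $2d$, and your $\zeta$ as written has real and imaginary parts interchanged relative to what property 3 requires).
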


\begin{rema}
	The function $\rho$ defined here is the \textbf{fibered rotation function } (up to multiply $2\pi$) in  \cite{avila2013monotonic}. It is a generalization of fibered rotation number for a Schr\"odinger or $SL(2,\RR)-$cocycle homotopic to identity.
	
\end{rema}

The strategy to prove Theorem \ref{rot num} is similar to the discussion in section 2 of \cite{avila2013monotonic}. But different from the $1-$dimensional case, the proof of monotonicity of the fibered rotation function is not trivial. We have to use the partial order defined in the last section.

\subsection{Definition of $\zeta$}
Denoted by $\Upsilon$ the set $$\{A\in Sp(2d,\RR), \A \cdot SPD_d\subset SPD_d\}$$where $SPD_d$ is the projective model of the disc $SD_d$.
For a matrix $A\in \Upsilon$, we can define the function $\tau_A:\overline{SD
	_d}\to GL(d,\CC)$ satisfying the following:

\begin{equation}
\A\begin{pmatrix}Z\\1\end{pmatrix}= \begin{pmatrix}\A\cdot Z\\1\end{pmatrix}\tau_{A}(Z)
\end{equation}

In fact the M\"obius transformation: $\A\cdot Z$ is well-defined for $A\in \Upsilon, Z\in \overline{SD_d}$: suppose $$\A=\begin{pmatrix}\ast&\ast\\C&D\end{pmatrix}$$ 
Since $\A\cdot \overline{SPD_d}\subset \overline{SPD_d}$, then $\begin{bmatrix}
\ast\\CZ+D
\end{bmatrix}\in \overline{SPD_d}$, therefore $CZ+D$ is invertible. As a result,  $\tau_{A}(Z)=CZ+D$.

For $\Upsilon$, denoted by $\hat{\Upsilon}$ its universal cover considered as a topological semi-group with unity $\hat{\id}$. Then there exists a unique continuous map $\hat{\tau}$:
\begin{equation}
\hat{\tau}: \hat{\Upsilon}\times \overline{SD_d}\to \CC \text{ such that }\hat{\tau}(\hat{\id},Z)=0 \text{ and }e^{i\hat{\tau}(\hat{A},Z)}=\det(\tau_A(Z))\label{tauhat}
\end{equation}
This map satisfies 
\begin{equation}
\hat{\tau}(\hat{A}_2\hat{A}_1, Z)=\hat{\tau}(\hat{A}_2, \A_1\cdot Z+\hat{\tau}(\hat{A}_1, Z))\label{gdit}
\end{equation}
and the following lemma:
\begin{lemma}\label{dpi}For any $\hat{A}\in \hat{\Upsilon}$, and any $Z,Z'\in \overline{SD_d}$,
	\begin{eqnarray}\mathfrak{I}\hat{\tau}(\hat{A},Z)=-|\ln\det(\tau_A(Z))|\label{itau}\\ 
	|\mathfrak{R}\hat{\tau}(\hat{A},Z)-\mathfrak{R}\hat{\tau}(\hat{A},Z')|<d\pi\label{arg est}
	\end{eqnarray}
\end{lemma}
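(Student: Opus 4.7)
Both parts of the lemma flow from the defining identity $e^{i\hat\tau(\hat A,Z)} = \det\tau_A(Z)$ of~\eqref{tauhat}.

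The identity~\eqref{itau} is essentially formal. Writing $\hat\tau = \mathfrak R\hat\tau + i\,\mathfrak I\hat\tau$ and taking moduli in~\eqref{tauhat} gives $e^{-\mathfrak I\hat\tau(\hat A,Z)} = |\det\tau_A(Z)|$, so $\mathfrak I\hat\tau(\hat A, Z) = -\ln|\det\tau_A(Z)|$. To recover the stated form with the outer absolute value, one must check that this quantity has a definite sign on $\hat\Upsilon \times \overline{SD_d}$, and this is where the hypothesis $A \in \Upsilon$ enters. The plan is to combine the classical formula $J_\A(Z) = \det\tau_A(Z)^{-(d+1)}$ for the complex Jacobian of the M\"obius action on $Sym_d\CC$ with the transformation law $V(\A Z)\,|J_\A(Z)|^2 \le V(Z)$ for the Bergman volume density (Lemma~\ref{schw} and Lemma~\ref{Berg vol}, with equality when $\A$ is a biholomorphism), obtaining
\[
|\det\tau_A(Z)|^{2(d+1)} \;\ge\; \prod_{i=1}^d \frac{(1-\sigma_i(Z)^2)^{d+1}}{(1-\sigma_i(\A Z)^2)^{d+1}},
\]
from which the required sign of $\ln|\det\tau_A(Z)|$ will follow from $\A$ being a self-map of $\overline{SD_d}$.

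For the real-part estimate~\eqref{arg est} the plan is to reduce the problem to a one-variable polynomial argument-variation count, exploiting that $\overline{SD_d}$ is convex (being the closed unit ball of the operator norm on $Sym_d\CC$). Fix $\hat A \in \hat\Upsilon$ and join $Z'$ to $Z$ by the affine segment $Z_t := (1-t)Z' + tZ$, $t \in [0,1]$, which convexity places entirely inside $\overline{SD_d}$. Since $\tau_A$ is affine in its argument, $p(t) := \det\tau_A(Z_t)$ is a polynomial in $t$ of degree at most $d$, and since $A \in \Upsilon$, $\tau_A(Z_t)$ is invertible for every $t \in [0,1]$, so $p(t) \ne 0$ on $[0,1]$. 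Factoring $p(t) = c\prod_k (t-t_k)$ with $t_k \in \CC \setminus [0,1]$, each factor contributes an argument change $\arg(1-t_k) - \arg(-t_k)$ equal to the signed angle subtended at the origin by the segment from $-t_k$ to $1-t_k$; because this segment avoids $0$ (as $t_k \notin [0,1]$), the absolute value of this angle is strictly less than $\pi$.

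To conclude, note that $t \mapsto \mathfrak R\hat\tau(\hat A, Z_t)$ is by~\eqref{tauhat} a continuous lift of $t \mapsto \arg p(t)$ to $\RR$, and this lift is path-independent since $\overline{SD_d}$ is simply connected. Summing the contributions from the at most $d$ linear factors then gives
\[
|\mathfrak R\hat\tau(\hat A, Z) - \mathfrak R\hat\tau(\hat A, Z')| \;=\; \Bigl|\sum_{k}\bigl(\arg(1-t_k)-\arg(-t_k)\bigr)\Bigr| \;<\; d\pi.
\]
The most delicate point will be establishing the sign needed for~\eqref{itau}: for an isometric $\A \in Sp(2d,\CC)\cap U(d,d)$ the quantity $|\det\tau_A(Z)|$ is not intrinsically bounded by~$1$, so the sign really depends on the proper self-map condition defining $\Upsilon$ combined with the Schwarz--Pick inequality for the Bergman volume density. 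The estimate~\eqref{arg est} is by comparison a clean one-variable polynomial computation, provided one checks carefully that each linear factor contributes \emph{strictly} less than $\pi$ rather than only $\le \pi$.
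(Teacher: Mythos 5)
Your treatment of \eqref{arg est} is correct and genuinely different from the paper's. The paper writes $\det(\tau_A(Z))=\det(D)\det(1+D^{-1}CZ)$, invokes $\norm{D^{-1}C}\leq 1$ (Proposition 2.3 of \cite{sadel2013herman}) to place the spectrum of $1+D^{-1}CZ$, $Z\in\overline{SD_d}$, in a closed half plane, and bounds the argument of the determinant directly; you instead use convexity of $\overline{SD_d}$, the affinity of $Z\mapsto CZ+D$, and the fact that $t\mapsto\det(\tau_A(Z_t))$ is a nonvanishing polynomial of degree at most $d$ on $[0,1]$, each root of which contributes argument variation strictly less than $\pi$. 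Both give the strict bound $d\pi$; your route is more self-contained (no external norm estimate), the paper's avoids the root-counting bookkeeping.

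For \eqref{itau}, however, your plan has a genuine flaw in its second half. The sign claim you set out to prove --- that $\ln|\det\tau_A(Z)|\geq 0$ on $\Upsilon\times\overline{SD_d}$ --- is false. Already for $d=1$: any $A\in SL(2,\RR)$ lies in $\Upsilon$ (the M\"obius action of $\A$ maps the disc onto itself), and writing $\A=\begin{pmatrix}\alpha&\beta\\ \bar\beta&\bar\alpha\end{pmatrix}$ with $|\alpha|^2-|\beta|^2=1$, one has $\tau_A(z)=\bar\beta z+\bar\alpha$, whose minimum modulus over the closed disc is $|\alpha|-|\beta|$, which is strictly less than $1$ whenever $|\alpha|>1$; the same persists for the deformations $A_{\sigma+it}$ with $t>0$ small. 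Accordingly, your Schwarz--Pick volume inequality is true but cannot yield the sign: its right-hand side $\prod_i\bigl((1-\sigma_i(Z)^2)/(1-\sigma_i(\A\cdot Z)^2)\bigr)^{d+1}$ drops below $1$ exactly when $Z$ is near $\partial SD_d$ and $\A\cdot Z$ is not. The resolution is that the outer absolute value in \eqref{itau} is a misprint: the identity being asserted (and the only one used later, e.g.\ in the proof of Lemma \ref{imindep z}, where $\mathfrak{I}\delta_z\xi_n$ takes both signs) is $\mathfrak{I}\hat{\tau}(\hat{A},Z)=-\ln|\det(\tau_A(Z))|$, which is precisely the one-line consequence of \eqref{tauhat} you derive in your first two sentences --- and that is the entirety of the paper's own proof of \eqref{itau}. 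So you should delete the volume-Jacobian argument rather than try to complete it.
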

\begin{proof}
	\eqref{itau} is  the consequence of \eqref{tauhat}. For \eqref{arg est}, suppose $\A=\begin{pmatrix}\ast&\ast\\C&D\end{pmatrix}$. 
	Notice that $\det(\tau_A(Z))=\det(D)\det(1+D^{-1}CZ)$, and by Proposition 2.3 of \cite{sadel2013herman},  $\norm{D^{-1}C}\leq 1$. Then by well-definedness of M\"obius transformation on $\overline{SD_d}$, the spectrum of matrix $1+D^{-1}CZ, Z\in \overline{SD_d}$ is contained in a half plane,  which implies \eqref{arg est}.
\end{proof}

Now if $\gamma: [0,1]\to \Upsilon$ is continuous, and $\hat{\gamma}:[0,1]\to \hat{\Upsilon}$ is a continuous lift, we define $\delta_\gamma\hat{\tau}(Z_0,Z_1)=\hat{\tau}(\hat{\gamma}(1), Z_1)-\hat{\tau}(\hat{\gamma}(0), Z_0)$; notice that it is independent of the choice of the lift.

For $x\in X$, consider a path $\gamma_x(s):=A_{l_{z_0,z_1}(s)}(x), s\in [0,1]$, where $l_{z_0,z_1}:[0,1]\to \CC^+\cup \RR$ is a continuous path such that $l_{z_0,z_1}(0)=z_0,l_{z_0,z_1}(1)=z_1$. Then we can define $\delta_{z_0,z_1}\xi: X\times \overline{SD_d}\times \overline{SD_d}\to \CC$ by $\delta_{z_0,z_1}\xi(x,Z_0,Z_1)=\delta_{\gamma_x}\hat{\tau}(Z_0,Z_1)$. Notice that $\delta_{z_0,z_1}\xi$ is  independent of the choice of $l_{z_0,z_1}$. 

Using the dynamics $f:X\to X$, we define 
\begin{eqnarray*}&&\delta_{z_0,z_1}\xi_n: X\times SD_d\times SD_d\to\CC\\
	&&\delta_{z_0,z_1}\xi_n(x,Z_0,Z_1):=\frac{1}{n}\sum_{k=0}^{n-1}\delta_{z_0,z_1}\xi(f^k(x), \A_{z_0}^k(x)\cdot Z_0,\A_{z_1}^k(x)\cdot Z_1 )
\end{eqnarray*}
where $\A^k(x):=\A(f^{k-1}(x))\cdots A(x)$.

We denote $\delta_z\xi$ short for $\delta_{0,z}\xi$. As in \cite{avila2013monotonic}, we study the limit of $\delta_z\xi_n$:
\begin{lemma}\label{imindep z}The limit of $\mathfrak{I}\delta_z\xi_n(x,Z_0,Z_1)$  exists for $\mu-$almost every $x$, all $z\in \CC^+$, and all $Z_0,Z_1\in SD_d$. Moreover it is independent of the choice of $Z_1$.
\end{lemma}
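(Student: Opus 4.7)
The plan is to first apply the cocycle identity \eqref{gdit} iteratively to telescope the Birkhoff-type average. Since $\hat{\tau}(\hat A_2 \hat A_1, Z) = \hat{\tau}(\hat A_2, \A_1 \cdot Z) + \hat{\tau}(\hat A_1, Z)$, the sum collapses and one obtains
$$n\,\delta_z\xi_n(x, Z_0, Z_1) \;=\; \hat{\tau}\bigl(\widehat{A_z^{\,n}}(x), Z_1\bigr) \;-\; \hat{\tau}\bigl(\widehat{A_0^{\,n}}(x), Z_0\bigr).$$
Taking imaginary parts, it therefore suffices to prove that $\tfrac{1}{n}\mathfrak{I}\hat{\tau}(\widehat{A_w^{\,n}}(x), Z)$ converges $\mu$-a.e.\ for every $Z \in SD_d$, in both cases $w = 0$ and $w = z \in \CC^+$, and moreover that for $w = z$ the limit does not depend on $Z$.

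Next I would reformulate the question via a skew-product. Put $F_w(x, Z) := (f(x), \A_w(x)\cdot Z)$ on $X \times \overline{SD_d}$ and $\phi_w(x, Z) := \mathfrak{I}\hat{\tau}(\hat A_w(x), Z)$. The cocycle relation \eqref{gdit} identifies $\mathfrak{I}\hat{\tau}(\widehat{A_w^{\,n}}(x), Z)$ with the Birkhoff sum $\sum_{k=0}^{n-1}\phi_w\!\circ\! F_w^k(x,Z)$. Since $\overline{SD_d}$ is compact and $\phi_w$ is bounded and continuous (by \eqref{itau}), Bogoliubov-Krylov supplies an $F_w$-invariant Borel probability measure projecting to $\mu$, and Birkhoff's theorem yields a.s.\ convergence with respect to that measure. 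The real work is then to upgrade this a.s.\ statement to one holding at $\mu$-a.e.\ $x$ for \emph{every} $Z$, and to extract $Z$-independence when $w = z \in \CC^+$. For this I invoke the strict Bergman contraction of $\A_z$: the factorization
$$\A_z \;=\; \begin{pmatrix}e^{-t}I_d & 0\\ 0 & e^{t}I_d\end{pmatrix}\begin{pmatrix}e^{i\sigma}I_d & 0\\ 0 & e^{-i\sigma}I_d\end{pmatrix}\A_0 \qquad (z = \sigma + it,\; t > 0)$$
shows that $\A_0$ is a Bergman isometry by Lemma \ref{schw}(1), the middle factor acts as the rotation $Z \mapsto e^{2i\sigma}Z$ and is also an isometry, while the leftmost factor acts as $Z \mapsto e^{-2t}Z$ and contracts the Bergman distance by the uniform factor $e^{-2t} < 1$ by Lemma \ref{schw}(2). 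Consequently $\A_z^{\,n}(x)\cdot Z$ and $\A_z^{\,n}(x)\cdot Z'$ are exponentially close in Bergman metric, and combined with uniform continuity of $\phi_z$ on compact subsets of $SD_d$ this forces both the convergence of the Birkhoff averages for every starting $Z$ and the independence of the limit from $Z$.

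The remaining piece concerns $w = 0$, where $\A_0$ acts as a Bergman \emph{isometry} rather than a contraction, so the contraction argument fails; this is the main obstacle. The statement only asks for independence of $Z_1$, so it suffices to prove $\mu$-a.e.\ existence of $\lim \tfrac{1}{n}\mathfrak{I}\hat{\tau}(\widehat{A_0^{\,n}}(x), Z_0)$ for every $Z_0 \in SD_d$, allowing $Z_0$-dependence. I plan to handle this via Kingman's subadditive ergodic theorem applied to $-\ln\lvert\det\tau_{A_0^{\,n}(x)}(Z_0)\rvert$, exploiting that by \eqref{itau} this quantity controls $\mathfrak{I}\hat{\tau}$, and that the multiplicative cocycle identity $\det\tau_{A^{n}}(Z) = \prod_{k=0}^{n-1}\det\tau_{A}(\A^k Z)$, together with the bound \eqref{arg est} and an analogous Lipschitz bound for $\ln\lvert\det\tau_{A}(\cdot)\rvert$ on compact subsets of $SD_d$, yields the required (almost) subadditive structure. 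Combining the analyses for $w=z$ and $w=0$ then completes the proof.
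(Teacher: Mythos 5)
Your telescoping step and your treatment of the half $w=z\in\CC^+$ are sound, and the latter is close in spirit to the paper: the paper also exploits the uniform Bergman contraction of $\A_z$ (Lemma \ref{schw}), but uses it to show that $\A_z^n(x)\cdot Z_1$ converges exponentially to the invariant section $m^+(z,\cdot)$, hence that $\begin{bmatrix}Z_1\\ I_d\end{bmatrix}$ is transverse to the Oseledec stable subspaces of $\Lambda^d(\A_z)$, so the limit is $-L^d(A_z,x)$, independent of $Z_1$. The genuine problems are in your $w=0$ half. The comparison you need for (almost) subadditivity is between $\ln|\det\tau_{A^m(f^n x)}(\A^n(x)\cdot Z_0)|$ and $\ln|\det\tau_{A^m(f^n x)}(Z_0)|$, and the point $\A^n(x)\cdot Z_0$ does \emph{not} stay in any compact subset of $SD_d$: for $w=0$ the cocycle acts by Bergman isometries, so the fiber orbit typically drifts to $\partial SD_d$ (it must whenever $L^d(A)>0$). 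Hence a Lipschitz bound \lq\lq on compact subsets of $SD_d$\rq\rq\ says nothing about these terms, and \eqref{arg est} controls only $\mathfrak{R}\hat{\tau}$ (the argument of $\det\tau$), not its modulus, which is what $\mathfrak{I}\hat{\tau}$ records by \eqref{itau}. A usable estimate does exist: writing $\det\tau_{A^m}(Z)=\det D_m\cdot\det(I_d+D_m^{-1}C_mZ)$ with $\norm{D_m^{-1}C_m}\leq 1$, one has $|\det(I_d+D_m^{-1}C_mW)|\leq 2^d$ for every $W\in\overline{SD_d}$ and $\geq(1-\norm{Z_0})^d$ at the fixed interior point $Z_0$, so that $\ln|\det\tau_{A^n(x)}(Z_0)|+d\ln\frac{2}{1-\norm{Z_0}}$ is genuinely subadditive; but this one-sided determinant bound is a different argument from the one you invoke, and you would have to supply it.

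Even with subadditivity repaired, Kingman gives an exceptional null set of $x$ depending on the fixed $Z_0$ (and on $z$), whereas the lemma asserts a single full-measure set of $x$ on which the limit exists for \emph{all} $Z_0,Z_1\in SD_d$. Since $Z_0\mapsto\lim\frac1n\ln|\det\tau_{A^n(x)}(Z_0)|$ is not continuous (it jumps across the Oseledec flag of $\Lambda^d(\A)$), you cannot pass from a countable dense family of $Z_0$ to all of $SD_d$ by approximation, so this quantifier gap is not cosmetic. This is precisely what the paper's route buys: it identifies $Z_0$ with $\Lambda\begin{pmatrix}Z_0\\ I_d\end{pmatrix}\in\Lambda^d(\CC^{2d})$ and applies the Oseledec theorem to the single cocycle $\Lambda^d(\A)$, which yields, on one full-measure set of $x$, existence of $\lim\frac1n\ln\norm{\Lambda^d(\A^n(x))\cdot v}$ for every vector $v$ simultaneously, and then proves $\lim_{n}\mathfrak{I}\delta_z\xi_n(x,Z_0,Z_1)=L^d(A,x,Z_0)-L^d(A_z,x)$. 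You should either replace the Kingman step by this exterior-power/Oseledec argument, or add an argument that handles all $Z_0$ at once; as written, the $w=0$ part does not prove the statement.
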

\begin{proof}
	The proof of $d=1$ can be found in Lemma 2.3 of \cite{avila2013monotonic}.  For general $d$, for any $Z\in SD_d$we identify $Z$ with a vector $v_1(Z)\wedge\cdots\wedge v_d(Z)\in \Lambda^d(\CC^{2d})$, where $v_i(Z)$ are the column vectors of the matrix $\begin{pmatrix}
	Z\\I_d
	\end{pmatrix}$. Therefore we define 
	\begin{eqnarray}L^d(A,x)&:=&\lim_{n\to\infty}\frac{1}{n}\ln||\Lambda^d(A)(x)||\\
	L^d(A,x, Z)&:=&\lim_{n\to\infty}\frac{1}{n}\ln||\Lambda^d(A)(x)\cdot Z||
	\end{eqnarray}
	By Oseledec theorem, the limit exists for $\mu-$almost every $x\in X$ and all $Z_0\in SD_d$. 
	
	We claim that  for $\mu-$almost every $x\in X$, for all $z\in \CC^+,Z_0,Z_1\in SD_d$,
	\begin{equation}\lim_{n\to \infty}\mathfrak{I}\delta_z\xi_n(x,Z_0,Z_1)=L^d(A, x,Z_0)-L^d(A_z, x)\label{im zeta}
	\end{equation}
	The proof is basically the same as Lemma 2.3 of \cite{avila2013monotonic}, we only need to check the following: for $z\in \CC^+ ,Z\in SD_d$, $\begin{bmatrix}Z\\I_d\end{bmatrix}$ transverses to all the Oseledec stable subspace of $\A_z(x)$.

	Consider $A_{\sigma+it}, t>0$. By Lemma \ref{schw}, $\A_{\sigma+it}$ uniformly contracts the Bergman metric of $SD_d$, and there exists a measurable function $m^+(\sigma+it, \cdot): X\to SD_d$ which is bounded from $\partial SD_d$ and holomorphically depends on $\sigma+it$, such that
	\begin{equation}
	m^+(\sigma+it, f(x))=\A_{\sigma+it}(x)\cdot m^+(\sigma+it, x)
	\end{equation}
	
	Moreover $\begin{bmatrix}m^+(\sigma+it, x)\\I_d\end{bmatrix}$ in the Grassmannian represents the unstable direction of the cocycle $\Lambda^d(\A_{\sigma+it})$ (see remark of \cite{sadel2013herman}, or section 3 and section 6 of \cite{avila2014complex}).
	
	In particular, for all $Z\in SD_d, z\in \CC^+$, the distance $$d(\A^n_z(x)\cdot Z, m^+(z, f^n(x)))$$ goes to $0$ exponentially fast, by Oseledec theorem,  $Z$ must transversal to all the Oseledec stable subspace.
\end{proof}

\begin{lemma}\label{indep z}
	For all $z\in \CC^+\cup \RR$,  $\lim_{n\to\infty}\int_X\mathfrak{R}\delta_z\xi_n(x,Z_0,Z_1)d\mu(x)$ exists for all $Z_0,Z_1\in\overline{SD_d}$. Moreover, it is continuous on $\CC^+\cup \RR$ and  independent of the choice of $Z_0,Z_1$.
\end{lemma}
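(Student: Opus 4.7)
The plan is to first rewrite $\delta_z\xi_n$ in a telescoped form using the cocycle identity \eqref{gdit}, and then apply the bound \eqref{arg est} on the real part of $\hat\tau$ both to handle the $Z_0,Z_1$-dependence and to derive an almost-subadditivity that yields existence together with uniform convergence. Iterating \eqref{gdit} on $\hat A_z^n(x)=\hat A_z(f^{n-1}x)\cdots\hat A_z(x)$ (and analogously on $\hat A^n(x)$) gives
\begin{equation*}
\delta_z\xi_n(x, Z_0, Z_1) = \frac{1}{n}\bigl[\hat\tau(\hat A_z^n(x), Z_1) - \hat\tau(\hat A^n(x), Z_0)\bigr].
\end{equation*}
Applying \eqref{arg est} to both terms gives $|\mathfrak R\delta_z\xi_n(x,Z_0,Z_1)-\mathfrak R\delta_z\xi_n(x,0,0)|<2d\pi/n$ uniformly in $x$, so any limit is automatically independent of the choice of $(Z_0,Z_1)$; it therefore suffices to work with $Z_0=Z_1=0$.

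For existence, for $w\in\CC^+\cup\RR$ set $a_n(w):=\int_X \mathfrak R\hat\tau(\hat A_w^n(x),0)\,d\mu(x)$. Applying \eqref{gdit} to the decomposition $\hat A_w^{n+m}(x)=\hat A_w^m(f^n x)\cdot\hat A_w^n(x)$, using \eqref{arg est} to replace the base point $\A_w^n(x)\cdot 0$ by $0$, and invoking the $f$-invariance of $\mu$, one obtains $|a_{n+m}(w)-a_n(w)-a_m(w)|\le d\pi$. A Fekete-type argument applied to $a_n(w)+d\pi\, n$ then produces a limit $\rho(w):=\lim_n a_n(w)/n$ together with the quantitative estimate $|a_n(w)/n-\rho(w)|\le d\pi/n$ (obtained by comparing $a_{kn}/kn$ to $a_n/n$ and letting $k\to\infty$). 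Taking the difference of the cases $w=z$ and $w=0$ yields the limit claimed in the statement.

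For continuity on $\CC^+\cup\RR$, each $w\mapsto a_n(w)/n$ is itself continuous: the Cayley conjugate $\A_w$, its lift $\hat A_w^n(x)$, and $\hat\tau$ all depend continuously on $w$, while on any compact $K\subset\CC^+\cup\RR$ the integrand $|\mathfrak R\hat\tau(\hat A_w^n(x),0)|$ is bounded by a constant $M(n,K)$ (obtained by summing, via \eqref{gdit} and \eqref{arg est}, the $n$ one-step contributions), so dominated convergence applies. Since the constant $d\pi$ appearing in $|a_n(w)/n-\rho(w)|\le d\pi/n$ is independent of $w$, the convergence $a_n/n\to\rho$ is uniform on $\CC^+\cup\RR$, and $\rho$ is continuous.

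The main obstacle is ensuring that the almost-subadditivity constant in the second step is genuinely independent of the parameter $w$; this is exactly what \eqref{arg est} provides, and it is what transfers continuity of each prelimit to continuity of the limit. No use of the holomorphic dependence of $A_w$ on $w$ is needed for this lemma — that information will be invoked separately when proving the holomorphic part of Theorem \ref{rot num}.
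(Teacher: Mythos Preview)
Your proof is correct and follows essentially the same route as the paper: both use \eqref{arg est} to get the $2d\pi/n$ bound controlling the $(Z_0,Z_1)$-dependence, then combine \eqref{gdit} with $f$-invariance of $\mu$ to obtain a uniform-in-$w$ almost-additivity estimate (the paper compares $a_n$ to $a_{nl}$ directly, you phrase it as a Fekete-type argument), yielding existence of the limit with a uniform $O(1/n)$ rate and hence continuity. One cosmetic point: the sequence $a_n(w)+d\pi\, n$ is not literally subadditive, but the parenthetical argument you give (comparing $a_{kn}/(kn)$ to $a_n/n$ and letting $k\to\infty$) is exactly the right one and delivers the claimed bound.
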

\begin{proof}For $z_0,z_1\in \CC^+\cup \RR, Z_0,Z_1\in SD_d$, let $$a_n(z_0,z_1,Z_0,Z_1):=\int_X\mathfrak{R}\delta_{z_0,z_1}\xi_n(x,Z_0,Z_1)d\mu(x)$$

	As in section 2 of \cite{avila2013monotonic}, by \eqref{arg est} and \eqref{gdit}, we have 
	\begin{equation}\label{rdelta var small}
	|\mathfrak{R}\delta_{z_0,z_1}\xi_n(x,Z_0,Z_1)-\mathfrak{R}\delta_{z_0,z_1}\xi_n(x,Z_0',Z_1')|<\frac{2d\pi}{n}
	\end{equation}
	
	Then for any $n,l>0$, by $f-$invariance of $\mu$ and last equation,
	\begin{eqnarray}\label{equi an}
	&&|a_n(z_0,z_1,Z_0,Z_1)-a_l(z_0,z_1,Z_0,Z_1)|\\\nonumber
	&\leq&|a_n(z_0,z_1,Z_0,Z_1)-a_{nl}(z_0,z_1,Z_0,Z_1)|\\\nonumber
	&+&|a_l(z_0,z_1,Z_0,Z_1)-a_{nl}(z_0,z_1,Z_0,Z_1)|\\\nonumber
	&\leq&\frac{1}{l}\int_X\sum_{j=0}^{n-1}|\mathfrak{R}\delta_{z_0,z_1}\xi_n(x,Z_0,Z_1)\\\nonumber
	&-&\mathfrak{R}\delta_{z_0,z_1}\xi_n(x,A_{z_0}^{nj}(f^{-nj}(x))\cdot Z_0, A_{z_1}^{nj}(f^{-nj}(x)) \cdot Z_1)|d\mu(x)+\nonumber\\
	&+& \frac{1}{n}\int_X\sum_{j=0}^{l-1}|\mathfrak{R}\delta_{z_0,z_1}\xi_l(x,Z_0,Z_1)\nonumber\\
	&-&\mathfrak{R}\delta_{z_0,z_1}\xi_l(x,A_{z_0}^{lj}(f^{-lj}(x))\cdot Z_0, A_{z_1}^{lj}(f^{-lj}(x)) \cdot Z_1)|d\mu(x)\nonumber\\
	&\leq & \frac{2d\pi}{n}+\frac{2d\pi}{l}\nonumber
	\end{eqnarray}
	
	Since $A\in L^\infty(X,Sp(2d,\RR))$, for any $Z_0,Z_1$, $a_n(\cdot, \cdot, Z_0,Z_1)$ is continuous. By \eqref{equi an}, $a_n(\cdot, \cdot, Z_0,Z_1)$ converges ( uniformly on any bounded subset ) to a continuous function on $(\CC^+\cup \RR)^2$. By \eqref{rdelta var small} we know this limit function does not depend on the choice of $Z_0,Z_1$.
\end{proof}

\begin{rema}\label{rema mea section}By the same proof, we can prove for any measurable sections $$Z_0, Z_1: X\to \overline{SD_d}$$  $\lim_{n\to\infty}\int_X\mathfrak{R}\delta_z\xi_n(x,Z_0(x),Z_1(x))d\mu (x)$ exists and does not depend on the choice of $Z_0,Z_1$.
\end{rema}
Now we can define function $\zeta$: $$\zeta(z):=\lim_{n\to\infty}\int_X\mathfrak{R}\delta_z\xi_n(x,0,0)d\mu(x)-iL^d(A_z)$$
then we claim $\zeta$ satisfies all conditions of Theorem \ref{rot num}. 
\subsection{subharmonicity and holomorphicity} We have the following lemma for the  subharmonicity of Lyapunov exponents.

\begin{lemma}
	The map $z \mapsto L^k(A_z)$ is a subharmonic function for $A\in L^\infty(X, GL(d,\CC))$.
\end{lemma}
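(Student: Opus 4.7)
My plan is to reduce to the top Lyapunov exponent via the exterior-power construction and then apply the classical Herman--Ruelle argument that subharmonicity follows from plurisubharmonicity of the log-norm combined with Kingman subadditivity.

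First, using the Remark after \eqref{equation} in the introduction, I would write $L^k(A_z) = L_1(\Lambda^k A_z)$, and observe that $z \mapsto \Lambda^k A_z$ is again a holomorphic family of bounded measurable $GL$-valued cocycles (of larger dimension). This reduces the claim to showing that $z \mapsto L_1(B_z)$ is subharmonic for any holomorphic family $z \mapsto B_z \in L^\infty(X, GL(m,\CC))$.

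Next, I would consider the finite-time averages
\[ u_n(z) \;:=\; \frac{1}{n}\int_X \ln\|B_z^n(x)\|\,d\mu(x). \]
For each fixed $x \in X$, the product $B_z^n(x)$ depends holomorphically on $z$, and for any holomorphic matrix-valued function $M(z)$ the function $z \mapsto \ln\|M(z)\|$ is subharmonic: one writes $\|M(z)\| = \sup_{\|v\|=1}\|M(z)v\|$, each $\ln\|M(z)v\|$ is subharmonic as the log-modulus of a holomorphic vector, and the supremum is continuous, hence upper semicontinuous. Integrating against the probability measure $\mu$ preserves subharmonicity (Fubini applied to the sub-mean-value inequality, combined with the uniform integrability provided by $A \in L^\infty$), so each $u_n$ is subharmonic.

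Finally, I would pass to the limit. Submultiplicativity of the operator norm together with $f$-invariance of $\mu$ gives $(m+n)\,u_{m+n}(z) \le m\,u_m(z) + n\,u_n(z)$, so along the dyadic subsequence $u_{2^k}(z)$ decreases monotonically to $L_1(B_z)$. A decreasing limit of subharmonic functions is either subharmonic or identically $-\infty$; since $B_z \in GL$ with locally bounded inverse, $L_1(B_z)$ is locally bounded below, so the limit is genuinely subharmonic. I do not expect a real obstacle here; the only technical point worth checking carefully is the passage from pointwise subharmonicity of $\ln\|B_z^n(x)\|$ in $z$ to subharmonicity of the $\mu$-average, which is a standard application of Fubini to the disc-mean inequality.
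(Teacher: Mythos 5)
Your argument is correct and is essentially the paper's proof written out in full: the paper simply observes that $L^k(A_z)$ is the decreasing limit of the subharmonic functions $z\mapsto \frac{1}{2^n}\int_X \ln\norm{\Lambda^k(A_z)^{2^n}}_{HS}\,d\mu$ and cites Lemma 2.3 of \cite{avila2011density}, which is exactly your reduction to $\Lambda^k$, subharmonicity of the finite-time averages, and monotonicity along the dyadic subsequence. The only point to note is that your final remark about the limit not being $-\infty$ uses boundedness of the inverse, which holds in the paper's actual setting (symplectic cocycles composed with rotations) though not literally for arbitrary $A\in L^\infty(X,GL(d,\CC))$; this does not affect the result as used.
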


\begin{proof}\label{le subharmonic}
	Notice that $z\mapsto L^k(A_z)$ is the limit of the decreasing sequence of subharmonic functions $z\mapsto \frac{1}{2^n}\int_X \ln\norm{\Lambda^k(A_z)^{2^n}}_{HS}d\mu$ (see \cite{avila2011density} Lemma 2.3 for example).
\end{proof}
By \eqref{im zeta} and the subharmonicity of Lyapunov exponents we get $\zeta$ satisfies (3). of Theorem \ref{rot num}. By Lemma \ref{indep z}, $\rho$ is a continuous function on $\CC^+\cup \RR$. 

Now we prove $\zeta$ is a holomorphic function on $\CC^+$. For $z\in \CC^+$, since $m^+(z,x)$  depends on $z$ holomorphically, $$\int_X\delta_z\xi_n(x, 0, m^+(z, x))d\mu(x)$$ is a sequence of uniformly bounded holomorphic functions of $z$. By the proof of Lemma \ref{imindep z} and Remark \ref{rema mea section}, $$\lim_{n\to\infty}\int_X\delta_z\xi_n(x, 0, m^+(z, x))d\mu(x)= \lim_{n\to\infty}\int_X\delta_z\xi_n(x, 0, 0)d\mu(x).$$ 

Then by Montel theorem, $\lim_{n\to\infty}\int_X\delta_z\xi_n(x, 0, m^+(z, x))d\mu(x)$ depends on $z$ holomorphically. By the definition of $\zeta$ we get $\zeta$ is a holomorphic function on $\CC^+$.

\subsection{Fibered rotation function is non-increasing}

To prove Theorem \ref{rot num}, we only need to prove $\rho$ is non-increasing on $\RR$. At first, we give a proof for  $d=1$, which gives us the basic idea for the general case.

For all $z\in \mathbb{S}^1$, and any lift of $A\in SL(2,\RR)$, we have the following equation: 
\begin{equation}
\A\cdot z=e^{-2i\mathfrak{R}(\hat{\tau}(\hat{A}, z))}z\label{st1d}
\end{equation}

Notice that $\lim\int_X\mathfrak{R}\delta_\theta\xi_n(x,z_1,z_2)\mu(x)$ does not depend on the choice of $z_1,z_2$, we can assume $z_1,z_2\in \mathbb{S}^1$. 

Then to prove $\rho$ is non-increasing on $\RR$, by definition of $\zeta$ and \eqref{st1d} we only need to prove for any $x\in X, z\in \mathbb{S}^1, n\in \N$ and any continuous lift of the path $\A_\theta(f^n(x))\cdots \A_\theta(x)\cdot z, \theta\in\RR$, denoted as 
$$\widehat{\A_\theta(f^n(x))\cdots \A_\theta(x)\cdot z}$$ is monotonic with respect to $\theta$. Here the lift $\hat{\gamma} $ for a curve $\gamma:\RR\to \mathbb{S}^1$ is a continuous function on $\RR$ such that $\pi\circ\hat{\gamma}=\gamma$, where $\pi(x)=e^{ix}$.

In fact, for $\theta>0$ we have 
\begin{eqnarray*}&&\widehat{\A_\theta(f^n(x))\cdots \A_\theta(x)\cdot z}\\
	&=&\widehat{e^{2i\theta}\A(f^n(x))e^{2i\theta}\A(f^{n-1}(x))\cdots e^{2i\theta}\A(x)\cdot z}\\
	&>&\widehat{\A(f^n(x))e^{2i\theta}\A(f^{n-1}(x))\cdots e^{2i\theta}\A(x)\cdot z}\\
	&&\text{ (the lift of the rotation is a translation)}\\
	&>&\widehat{\A(f^n(x))\A(f^{n-1}(x))\cdots e^{2i\theta}\A(x)\cdot z}\\
	&&\text{ (the lift of the }\A\text{ action preserves the order)}\\
	&>&\cdots\\
	&>&\widehat{\A(f^n(x))\A(f^{n-1}(x))\cdots \A(x)\cdot z}
\end{eqnarray*}
Then $\rho$ is non-increasing when $d=1$.

For $d>1$, we have the following lemma to replace \eqref{st1d}, 

\begin{lemma}\label{geometric meaning of tau hat}
	For all $Z\in U_{sym}(\CC^d)$, and any lift of $A\in Sp(2d,\RR)$, 
	\begin{equation}\det(\A\cdot Z)=e^{-2i\mathfrak{R}(\hat{\tau}(\hat{A}, Z))}\det(Z)\label{stdd}
	\end{equation}
\end{lemma}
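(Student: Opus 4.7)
My approach is direct verification, exploiting the very special block structure that $\A$ inherits from lying in the intersection $Sp(2d,\CC)\cap U(d,d)$. Write $\A=\begin{pmatrix}A' & B' \\ C' & D'\end{pmatrix}$, so by definition of the Möbius action one has $\A\cdot Z=(A'Z+B')(C'Z+D')^{-1}$ and, by \eqref{tauhat}, $e^{i\hat\tau(\hat A,Z)}=\det\tau_A(Z)=\det(C'Z+D')$. Any two lifts of $A$ differ by an element of $\pi_1(\Upsilon)$, which shifts $\hat\tau$ by a real integer multiple of $2\pi$, so $e^{-2i\mathfrak R(\hat\tau)}$ is independent of the lift and it suffices to prove the formula once.

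The first step is to extract the block relations. Equating the two expressions for $\A^{-1}$, namely $\A^{-1}=-J\A^T J$ (from $\A^T J\A=J$) and $\A^{-1}=K\A^* K$ with $K=\diag(I_d,-I_d)$ (from $\A^* K\A=K$), a block-by-block comparison yields $D'=\bar{A'}$ and $C'=\bar{B'}$. Thus $\A$ has the higher-dimensional $SU(1,1)$ shape $\begin{pmatrix}A' & B'\\ \bar{B'} & \bar{A'}\end{pmatrix}$.

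Next I unpack the right-hand side. Since $\overline{e^{i\hat\tau}}=e^{-i\overline{\hat\tau}}$, a short calculation gives
\[
e^{-2i\mathfrak R(\hat\tau)}=e^{-i\hat\tau}\cdot e^{-i\overline{\hat\tau}}=\frac{\overline{\det(C'Z+D')}}{\det(C'Z+D')}.
\]
Multiplying the claimed identity through by $\det(C'Z+D')$ and using the obvious simplification $(\A\cdot Z)\tau_A(Z)=A'Z+B'$, the statement reduces to
\[
\det(A'Z+B')=\det(Z)\cdot\overline{\det(C'Z+D')}.
\]

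Finally, the hypothesis $Z\in U_{sym}(\CC^d)$ gives $\bar Z=Z^*=Z^{-1}$. Therefore
\[
\det(Z)\,\overline{\det(C'Z+D')}=\det(Z)\det(\bar{C'}\bar Z+\bar{D'})=\det\bigl((\bar{C'}\bar Z+\bar{D'})Z\bigr)=\det(\bar{D'}Z+\bar{C'}),
\]
and substituting $\bar{D'}=A'$, $\bar{C'}=B'$ from the first step produces exactly $\det(A'Z+B')$, closing the identity. The only substantive ingredient is the block identification; the rest is a short symbolic computation, and the main thing to watch is careful handling of conjugates (in particular the distinction between $\overline{e^{i\hat\tau}}$ and $e^{i\overline{\hat\tau}}$). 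No genuine obstacle is anticipated, since the combination of symplectic and pseudo-unitary constraints forces the block entries into exactly the conjugate pairing that the identity requires.
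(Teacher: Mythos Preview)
Your proof is correct and takes a genuinely different route from the paper. The paper argues via the Cartan decomposition of $Sp(2d,\RR)$: it observes that \eqref{gdit} makes the identity multiplicative in $\hat A$, and then verifies \eqref{stdd} separately on the two Cartan factors $\A=\begin{pmatrix}U&0\\0&(U^{-1})^T\end{pmatrix}$ and $\A=\begin{pmatrix}\tfrac12(S+S^{-1})&\tfrac12(S-S^{-1})\\\tfrac12(S-S^{-1})&\tfrac12(S+S^{-1})\end{pmatrix}$ with $U$ unitary and $S$ real diagonal. Your argument instead exploits directly that $\A\in Sp(2d,\CC)\cap U(d,d)$ forces the block structure $\A=\begin{pmatrix}A'&B'\\\overline{B'}&\overline{A'}\end{pmatrix}$, after which the identity collapses to $\det(A'Z+B')=\det(Z)\,\overline{\det(\overline{B'}Z+\overline{A'})}$ and follows in one line from $\bar Z=Z^{-1}$ on the Shilov boundary. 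Your approach is more uniform and avoids case analysis; the paper's approach has the minor advantage of making the cocycle property \eqref{gdit} do visible work and of displaying explicitly how each Cartan factor acts, which is occasionally useful elsewhere.
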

\begin{proof}
	By $\eqref{gdit}$, $\hat{\tau}$ behaves well under the iteration, so by Cartan decomposition of $Sp(2d,\RR)$, we only need to prove \eqref{stdd} for $$\A=\begin{pmatrix}U&\\&(U^{-1})^T\end{pmatrix} \text{ or }\begin{pmatrix}\frac12(S+S^{-1})&\frac12(S-S^{-1})\\\frac12(S-S^{-1})&\frac12(S+S^{-1})\end{pmatrix}$$ where $U$ is an arbitrary unitary matrix, $S$ is an arbitrary real non-singular diagonal $d\times d$ matrix.
	
	For the first case,
	\begin{equation*}\det(\A\cdot Z)=\det(UZU^T)=\det(U)^2\det(Z)=e^{-2i\mathfrak{R}(\hat{\tau}(\hat{A}, Z))}\det(Z)
	\end{equation*}
	
	For the second case,
	\begin{eqnarray*}
		&&\det(\A\cdot Z)\\
		&=&\det((S+S^{-1})Z+(S-S^{-1}))\det((S-S^{-1})Z+(S+S^{-1}))^{-1}\\
		&=&\det((S+S^{-1})+(S-S^{-1})\overline{Z})\det((S-S^{-1})Z+(S+S^{-1}))^{-1}\\
		&&\cdot \det(Z) (\text{ since }Z\in U_{sym}(\CC^d), Z^{-1}=\overline{Z})\\
		&=&e^{-2i\text{Arg}(\det((S-S^{-1})Z+(S+S^{-1})))}\det(Z)(\text{ since }S \text{ is a real matrix})\\
		&=&e^{-2i\mathfrak{R}(\hat{\tau}(\hat{A}, Z))}\det(Z)
	\end{eqnarray*}
\end{proof}

Come back to the proof of the non-increasing property of $\rho$. As in the case $d=1$, by \eqref{stdd}, we have to prove for all $x\in X, Z\in U_{sym}(\CC^d)$, any continuous lift of the path $\det(\A_\theta(f^n(x))\cdots \A_\theta(x)\cdot Z), \theta\in\RR$ is monotonic with respect to $\theta$.

In other words, we need to prove for any continuous lift of the path $\A_\theta(f^n(x))\cdots \A_\theta(x)\cdot Z$, denoted as $\widehat{\A_\theta(f^n(x))\cdots \A_\theta(x)\cdot Z}$, we have that $\hat{\det}(\widehat{\A_\theta(f^n(x))\cdots \A_\theta(x)\cdot Z})$ is monotonic with respect to $\theta$, where $\hat{\det}$ is defined in \eqref{def hatdet} .

By (3), (4) of Lemma \ref{order on covering}, using the order defined in the subsection \ref{partial order}, as the $1-$dimensional case, we have for $\theta>0$,
\begin{equation}\widehat{\A_\theta(f^n(x))\cdots \A_\theta(x)\cdot Z}>\widehat{\A(f^n(x))\cdots \A(x)\cdot Z}
\end{equation}

But by (1). of Lemma \ref{order on covering}, $\hat{\det}$ is monotonic with respect to the order $"<"$. Combining with last equation, we have that the function $\hat{\det}(\widehat{\A_\theta(f^n(x))\cdots \A_\theta(x)\cdot Z})$ is monotonic with respect to $\theta$, which completes the proof of Theorem \ref{rot num}.

\section{A Kotani theoretic estimate}\label{key chapter }
The main aim of this chapter is Theorem \ref{m+=m-}, which is a higher dimensional generalization of Lemma 2.6 in \cite{avila2013monotonic}. We introduce the concept of $m^--$function firstly.  

\subsection{The $m^-$-function}
By Lemma \ref{schw}, $\A_{\sigma-it}^{-1}, t>0$ contracts the Bergman metric uniformly on $SD_d$.  we can define $m^-(\sigma-it, \cdot)\in SD_d, t>0$ which depends on $\sigma-it$ holomorphically, such that 
\begin{equation}\label{m-def}
m^-(\sigma-it, f(x))=\A_{\sigma-it}(x)\cdot m^-(\sigma-it, x)
\end{equation}

For later use, we consider the following property of $m^-$: for $t>0$, by \eqref{m-def} and the definition of function $\tau_{(\cdot)}(\cdot)$, there exists a function $\tau_{A_{\sigma-it}(x)}(m^-(\sigma-it,x))\in GL(d,\CC)$ such that 
\begin{equation}\label{tau_-def}
\A_{\sigma-it} 
\begin{pmatrix}m^-(\sigma-it, x)\\I_d
\end{pmatrix}=
\begin{pmatrix}m^-(\sigma-it,f(x))\\ I_d 
\end{pmatrix} \tau_{A_{\sigma-it}(x)}(m^-(\sigma-it,x))
\end{equation}
Moreover we have:
\begin{lemma}\label{conj m-}
	\begin{equation}
	\A_{\sigma+it}\begin{pmatrix}I_d\\\overline{m^-(\sigma-it,x)}
	\end{pmatrix}=
	\begin{pmatrix}I_d\\
	\overline{m^-(\sigma-it, f(x))} 
	\end{pmatrix}\overline{\tau_{A_{\sigma-it}(x)}(m^-(\sigma-it,x))}
	\end{equation}
\end{lemma}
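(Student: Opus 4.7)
The plan is to derive the identity by complex-conjugating the defining equation \eqref{tau_-def} for $m^-$ and exploiting the reality of $A(x)\in Sp(2d,\RR)$. The key algebraic observation is about the Cayley element $C$ defined in \eqref{Cayley ele}: a direct computation gives $\bar C = P\,C$, where $P := \begin{pmatrix}0 & I_d\\ I_d & 0\end{pmatrix}$ is the block-swap matrix; equivalently $\bar C^{-1}=C^{-1}P$, and $P^2=I$.

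First, conjugating both sides of \eqref{tau_-def} produces
$$\overline{\A_{\sigma-it}}\begin{pmatrix}\overline{m^-(\sigma-it,x)}\\ I_d\end{pmatrix}=\begin{pmatrix}\overline{m^-(\sigma-it,f(x))}\\ I_d\end{pmatrix}\overline{\tau_{A_{\sigma-it}(x)}(m^-(\sigma-it,x))}.$$
Since $\overline{\cos(\sigma-it)}=\cos(\sigma+it)$, $\overline{\sin(\sigma-it)}=\sin(\sigma+it)$, and $A(x)$ is real, one has $\overline{A_{\sigma-it}(x)}=A_{\sigma+it}(x)$. Combining this with $\bar C = PC$ gives
$$\overline{\A_{\sigma-it}}=\bar C\,\overline{A_{\sigma-it}}\,\bar C^{-1}= P\,C\, A_{\sigma+it}\, C^{-1}\, P = P\,\A_{\sigma+it}\,P.$$

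Finally, substituting this into the conjugated equation and left-multiplying by $P$, while using $P\begin{pmatrix}v_1\\ v_2\end{pmatrix}=\begin{pmatrix}v_2\\ v_1\end{pmatrix}$, swaps the two blocks on each side simultaneously and delivers exactly the identity claimed by the lemma. I do not anticipate any real obstacle here: the argument compresses to the single algebraic identity $\bar C = P C$ plus routine substitution, with the reality of $A$ providing the bridge between the $\sigma-it$ and $\sigma+it$ deformations.
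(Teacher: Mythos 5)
Your argument is correct and is essentially the paper's proof: the paper also conjugates the defining relation \eqref{tau_-def}, uses the reality of $A$ to get $\overline{A_{\sigma-it}}=A_{\sigma+it}$, and exploits the same Cayley-element identity (the paper computes $C\overline{C^{-1}}=\begin{pmatrix}0&I_d\\ I_d&0\end{pmatrix}$, which is exactly your $\bar C=PC$) to swap the blocks. The only difference is bookkeeping: the paper peels $C$ off before conjugating and reinserts it afterwards, whereas you conjugate the $\A$-equation directly, but the computation is the same.
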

\begin{proof}We denote $A$ for $A_{\sigma+it}$, $A_-$ for $A_{\sigma-it}$, $m^{-}$ for $m^{-}(\sigma- it, x)$, $\tm^{-}$ for $m^{-}(\sigma- it, f(x))$, $\tau_-$ for $\tau_{A_{\sigma-it}(x)}(m^-(\sigma-it,x))$. Recall that $C$ is the Cayley element defined in \eqref{Cayley ele}, then by \eqref{tau_-def} we have 
	\begin{eqnarray*}
		\A_-\begin{pmatrix}m_-\\I_d\end{pmatrix}&=&\begin{pmatrix}\tm_-\\I_d\end{pmatrix}\tau_-\\
		CA_-C^{-1}\begin{pmatrix}m_-\\I_d\end{pmatrix}&=&\begin{pmatrix}\tm_-\\I_d\end{pmatrix}\tau_-(\text{ by definition of }\A)\\
		A_-C^{-1}\begin{pmatrix}m_-\\I_d\end{pmatrix}&=&C^{-1}\begin{pmatrix}\tm_-\\I_d\end{pmatrix}\tau_-\\
	\end{eqnarray*}
	Take complex conjugate for both sides of last equation, we have 
	\begin{eqnarray*}
		A\overline{C^{-1}}\begin{pmatrix}\overline{m_-}\\I_d\end{pmatrix}&=&\overline{C^{-1}}\begin{pmatrix}\overline{\tm_-}\\I_d\end{pmatrix}\overline{\tau_-}\\
		AC^{-1}(C\overline{C^{-1}}\begin{pmatrix}\overline{m_-}\\I_d\end{pmatrix})&=&\overline{C^{-1}}\begin{pmatrix}\overline{\tm_-}\\I_d\end{pmatrix}\overline{\tau_-}\\
		CAC^{-1}(C\overline{C^{-1}}\begin{pmatrix}\overline{m_-}\\I_d\end{pmatrix})&=&C\overline{C^{-1}}\begin{pmatrix}\overline{\tm_-}\\I_d\end{pmatrix}\overline{\tau_-}\\
		\A(C\overline{C^{-1}}\begin{pmatrix}\overline{m_-}\\I_d\end{pmatrix})&=&C\overline{C^{-1}}\begin{pmatrix}\overline{\tm_-}\\I_d\end{pmatrix}\overline{\tau_-}\\
	\end{eqnarray*}
	Notice that $C\overline{C^{-1}}=\begin{pmatrix}0&I_d\\I_d&0\end{pmatrix}$, we have 
	\begin{equation*}
	\A\begin{pmatrix}I_d\\ \overline{m^-}\end{pmatrix}=\begin{pmatrix}I_d\\ \overline{\tm^-}\end{pmatrix}\overline{\tau_-}
	\end{equation*}
\end{proof}

Now we can state Theorem \ref{m+=m-}. 
\begin{thm}\label{m+=m-}
	For almost every $\sigma_0\in \RR$ such that $L(A_{\sigma_0})=0$, we have that:
	
	(1).	\begin{eqnarray*}
		\limsup_{t\to 0^+}&&\int_X\frac{1}{1-\norm{m^+(\sigma_0+it, x)}^2} d\mu(x)\\+&&\int_X \frac{1}{1-\norm{m^-(\sigma_0-it, x)}^2}d\mu(x)<\infty
	\end{eqnarray*}
	
	(2).	\begin{eqnarray*}
		\liminf_{t\to 0^+}\int_X \norm{m^+(\sigma_0+it, x))-m^-(\sigma_0-it,x)}^2d\mu(x)=0
	\end{eqnarray*}
	
\end{thm}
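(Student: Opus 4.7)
The plan is to generalize Avila's proof of Lemma 2.6 in \cite{avila2013monotonic} by exploiting that $\zeta$ from Theorem \ref{rot num} plays the role of a Weyl-type $m$-function on the Siegel upper half plane. Write $z=\sigma+it$ and $\A(x)=\begin{pmatrix}\alpha(x)&\beta(x)\\ \gamma(x)&\delta(x)\end{pmatrix}$ in $d\times d$ block form. First I would derive a compact closed expression for $\zeta$. Since $m^+(z,\cdot)$ is an $\A_z$-invariant section of $SD_d$ on $\CC^+$, the cocycle identity \eqref{gdit} telescopes the iterated sum defining $\delta_z\xi_n$; combined with Remark \ref{rema mea section} (which allows substituting the measurable section $m^+(z,\cdot)$ for the constant $0$ inside the limit), this yields
$$\zeta(z) \;=\; z\cdot d \;-\; i\int_X \log\det\bigl(\gamma(x) m^+(z,x)+\delta(x)\bigr)\,d\mu(x) \;+\; \text{const},$$
with the logarithm chosen via the continuous lift $\hat{\tau}$. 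An identical formula on $\CC^-$ involves $m^-$, and Lemma \ref{conj m-} provides the conjugate identification of $\overline{m^-}$ with the slow-stable direction of $\A_{\sigma+it}$.

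Next I would differentiate in $z$. This produces a bare contribution $d$ and a term of the form $-i\int_X \mathrm{tr}((\gamma m^+ + \delta)^{-1}\gamma\,\partial_z m^+)\,d\mu$. Differentiating the invariance $m^+(z,f(x))=\A_z(x)\cdot m^+(z,x)$ in $z$ exhibits the latter as a coboundary for $f$, so it integrates to zero. Taking imaginary parts and converting the resulting $\det$-quantity into a Bergman-distance-to-Shilov-boundary via Lemma \ref{Berg vol} (the Bergman density is $\prod_i(1-\sigma_i(Z)^2)^{-(d+1)}$, which dominates $(1-\norm{Z}^2)^{-1}$) yields the key identity
$$\partial_t L^d(A_{\sigma+it}) \;=\; -\,\partial_\sigma\rho(\sigma+it) \;=\; -\int_X \Phi^+(\sigma+it,x)\,d\mu(x),$$
with $\Phi^+\ge 0$ comparable to $(1-\norm{m^+}^2)^{-1}$. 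Because $\rho$ is non-increasing on $\RR$ (Theorem \ref{rot num}), it is of locally bounded variation and $\rho'(\sigma_0)$ exists and is locally integrable for a.e.\ $\sigma_0$. For a.e.\ such $\sigma_0$ with $L^d(A_{\sigma_0})=0$: since $L^d\ge 0$ attains its minimum at $\sigma_0$ one has $\partial_\sigma L^d(\sigma_0)=0$; Cauchy-Riemann on $\zeta$ then gives $\partial_t L^d(\sigma_0+i0^+)=-\rho'(\sigma_0)$ finite. Plugging into the identity above produces (1) for $m^+$; the analogous computation on $\CC^-$ (or via $z\mapsto \bar z$ using Lemma \ref{conj m-}) gives the bound for $m^-$.

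For (2), bound (1) makes $\{m^\pm(\sigma_0\pm it_n,\cdot)\}$ precompact as sections staying $L^2$-integrably away from the Shilov boundary; extract weak $L^2$ subsequential limits $m_0^\pm:X\to SD_d$. Since $L^d(A_{\sigma_0})=0$ and $A_{\sigma_0}$ is symplectic, every Lyapunov exponent of $A_{\sigma_0}$ vanishes, so the Oseledec fast-unstable and fast-stable Lagrangian subspaces of $\A_{\sigma_0}$ must coincide. Using that $m^+$ represents the unstable direction (proof of Lemma \ref{imindep z}) and that Lemma \ref{conj m-} identifies $\overline{m^-}$ with the stable direction, the limits satisfy $m_0^+(x)=m_0^-(x)$ for $\mu$-a.e.\ $x$, and the uniform distance-to-boundary control from (1) upgrades weak to strong $L^2$ convergence, giving (2).

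The main obstacle is the Bergman identification at the heart of the derivative computation: converting $\mathfrak{I}\zeta'$ into the explicit reciprocal-distance integral $\int_X(1-\norm{m^+}^2)^{-1}\,d\mu$. In dimension $d=1$ this is an elementary Poincar\'e-disc identity; in higher rank the density is a product over $d$ singular values, and the matrix-valued M\"obius denominator $\gamma Z+\delta$ must be reorganized using the biholomorphic, Bergman-isometric action of $Sp(2d,\RR)$ on $SD_d$ developed in Section \ref{hsspace} and the Cartan-type decomposition of Lemma \ref{geometric meaning of tau hat}. A secondary technical difficulty is Step 3 of part (2): the scalar relation $m^-(\bar z)=\overline{m^+(z)}$ is replaced by the off-antidiagonal conjugation relation of Lemma \ref{conj m-}, and the identification $m_0^+=m_0^-$ must be extracted from the geometry of the Oseledec Lagrangians rather than from a simple pointwise conjugation.
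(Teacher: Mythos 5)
Your overall scaffolding (Cauchy--Riemann, monotonicity of $\rho$ giving a.e.\ finiteness of $\lim_{t\to0^+}\partial_t L^d(A_{\sigma_0+it})$) matches the paper's Lemma \ref{kotanimono}, but the two analytic inputs you build on top of it are not correct as stated, and they are exactly where the real work lies. For (1), you claim an exact identity $\partial_t L^d(A_{\sigma+it})=-\int_X\Phi^+\,d\mu$ with $\Phi^+\ge0$ comparable to $(1-\norm{m^+}^2)^{-1}$. This has the wrong structure and the wrong sign: since $L^d\ge0$ vanishes at $\sigma_0+i0^+$ and grows like $dt$ for large $t$, the boundary derivative is nonnegative, and the true identity for $\partial_t L^d$ (obtained in the paper from the ``key equation'') is $\partial_t L^d=\int_X\mathfrak{R}\,\mathrm{tr}\bigl((I_d-\overline{\tm^-}\tm^+)^{-1}(I_d+\overline{\tm^-}\tm^+)\bigr)d\mu$, a quantity coupling $m^+$ and $m^-$ that is not comparable to $(1-\norm{m^+}^2)^{-1}$ alone. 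What controls each of $m^\pm$ separately is not $\partial_t L^d$ but the quotient $L^d/t$, for which the paper proves the \emph{inequality} $L^d/t\ge\frac12\int_X\sum_i\bigl(\frac{1+\sigma_i(\tm^+)^2}{1-\sigma_i(\tm^+)^2}+\frac{1+\sigma_i(\tm^-)^2}{1-\sigma_i(\tm^-)^2}\bigr)d\mu$, via the Jacobian ($q$-function) of the M\"obius action with respect to the Bergman volume and the fact that $\A_{\sigma+it}$ is the composition of a Bergman isometry with the dilation $\mathrm{diag}(e^{-t},e^{t})$ (Lemmas \ref{qcomp}, \ref{jac leb}, \ref{q compu}). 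Your ``coboundary integrates to zero'' step and the conversion of $\mathfrak{I}\zeta'$ into a reciprocal-distance integral are precisely the computations you defer to the ``main obstacle,'' so part (1) is not established by the proposal.

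For (2) the gap is more fundamental. From $L(A_{\sigma_0})=0$ and symplecticity all Lyapunov exponents of $A_{\sigma_0}$ vanish, but then the Oseledec filtration at $t=0$ is trivial: there are no ``fast-unstable and fast-stable Lagrangian subspaces'' to coincide, so nothing in your argument forces any subsequential limits of $m^+(\sigma_0+it,\cdot)$ and $m^-(\sigma_0-it,\cdot)$ to agree. That the two $m$-functions merge on the set where the exponent vanishes is the essential content of Kotani theory, and it is obtained quantitatively, not qualitatively: the paper expands the key equation in the basis $\mathfrak{B}\bigl(\begin{pmatrix}\tm^+\\ I_d\end{pmatrix},\begin{pmatrix}I_d\\ \overline{\tm^-}\end{pmatrix}\bigr)$ (using Lemma \ref{conj m-}), takes the coefficient identity for $\partial_t L^d$, and combines it with the $L^d/t$ lower bound and the matrix trace inequality of Lemma \ref{finalstep}, whose difference dominates $\norm{\tm^+-\tm^-}^2_{HS}$; Lemma \ref{kotanimono} then forces the $\liminf$ in (2) to vanish. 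Your route assumes the conclusion (coincidence of boundary objects) rather than deriving it, and the auxiliary claims---existence of limits $m_0^\pm$, and ``weak $L^2$ plus distance-to-boundary control implies strong convergence''---are also unjustified. So both halves of the theorem still require the Bergman--Jacobian computation and the coupled trace identity that the proposal leaves open.
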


To prove Theorem \ref{m+=m-}, we introduce the following concepts.

\subsection{$q-$function and Lyapunov exponents}
\begin{definition}\label{q def}
	Consider the derivative of the holomophic map $Z\mapsto \A_{\sigma+it}(x)\cdot Z$ at point $m^+(\sigma+it,x)$, denote $q_{\sigma+it}(x )$ as the Jacobian of the derivative with respect to the volume form induced by the Bergman metric.
\end{definition}
By the discussion before Lemma \ref{Berg vol}, we have the following expression of $q-$function:
\begin{lemma}\label{qcomp}
	\begin{equation}q_{\sigma+it}(x)=|\frac{dm^+(\sigma+it, f(x))}{dm^+(\sigma+it, x)}|\frac{V(m^+(\sigma+it, f(x))}{V(m^+(\sigma+it, x))}
	\end{equation} 
	where $|\frac{dm^+(\sigma+it, f(x))}{dm^+(\sigma+it, x)}|$ is the Jacobian of the map $Z\mapsto \A_{\sigma+it}(x)\cdot Z$ at point $m^+(\sigma+it,x)$ with respect to the Lebesgue measure $d\lambda$ defined in \eqref{leb on sym}.
\end{lemma}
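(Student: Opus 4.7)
The plan is to view the claim as a direct instance of the change-of-variables formula for Jacobians when one switches between two reference volume forms on the same space, combined with the equivariance property of $m^+$.

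First I would recall that on $SD_d$ the Bergman volume form is, by construction and Lemma \ref{Berg vol}, equal to $d\mathrm{vol}_B(Z)=V(Z)\,d\lambda(Z)$, where $d\lambda$ is the Lebesgue density on $Sym_d\CC$ defined in \eqref{leb on sym} and $V(Z)=\prod_i(1-\sigma_i(Z)^2)^{-(d+1)}$. Thus for any holomorphic diffeomorphism $\phi:U\to U'$ between open subsets of $SD_d$, the pullback satisfies
\begin{equation*}
\phi^{*}\bigl(V\,d\lambda\bigr)(Z)\;=\;V(\phi(Z))\,\Bigl|\tfrac{d\phi(Z)}{dZ}\Bigr|\,d\lambda(Z),
\end{equation*}
where $\bigl|\tfrac{d\phi(Z)}{dZ}\bigr|$ is the Lebesgue Jacobian of $\phi$. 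Dividing by $V(Z)\,d\lambda(Z)$, the Jacobian of $\phi$ computed with respect to the Bergman volume form at $Z$ equals $\tfrac{V(\phi(Z))}{V(Z)}\bigl|\tfrac{d\phi(Z)}{dZ}\bigr|$.

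Next I would specialize to $\phi(Z):=\A_{\sigma+it}(x)\cdot Z$, which is holomorphic on $SD_d$ since $A_{\sigma+it}\in\Upsilon$ (Lemma \ref{schw} shows $\A_{\sigma+it}$ even contracts the Bergman metric). Evaluating at $Z=m^+(\sigma+it,x)$ and invoking the defining invariance relation
\begin{equation*}
m^+(\sigma+it,f(x))=\A_{\sigma+it}(x)\cdot m^+(\sigma+it,x)
\end{equation*}
from Section 3, the general formula above yields
\begin{equation*}
q_{\sigma+it}(x)\;=\;\Bigl|\tfrac{dm^+(\sigma+it,f(x))}{dm^+(\sigma+it,x)}\Bigr|\,\frac{V(m^+(\sigma+it,f(x)))}{V(m^+(\sigma+it,x))},
\end{equation*}
which is exactly the claimed identity by Definition \ref{q def}.

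There is essentially no genuine obstacle here: the content is entirely bookkeeping between two volume forms. The only mild point worth verifying is that the Lebesgue Jacobian of a holomorphic map between open subsets of $Sym_d\CC$ is well-defined and positive (so that absolute values behave correctly), and that $m^+(\sigma+it,x)$ lies in $SD_d$ (not its boundary), so $V$ is finite and nonzero there; both are guaranteed because $\A_{\sigma+it}$ uniformly contracts the Bergman metric and $m^+$ is bounded away from $\partial SD_d$, as noted in the proof of Lemma \ref{imindep z}.
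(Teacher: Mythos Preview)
Your proof is correct and matches the paper's approach: the lemma is stated there as an immediate consequence of the fact that the Bergman volume form is $V(Z)\,d\lambda(Z)$ (the ``discussion before Lemma \ref{Berg vol}''), which is exactly the change-of-variables bookkeeping you carry out.
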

The following lemma gives a explicit formula of $|\frac{dm^+(\sigma+it, f(x))}{dm^+(\sigma+it, x)}|$.
\begin{lemma}\label{jac leb} $|\frac{dm^+(\sigma+it, f(x))}{dm^+(\sigma+it, x)}|=|\det(\tau_{A_{\sigma+it}}(x))|^{-2(d+1)}$
\end{lemma}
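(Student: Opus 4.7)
The plan is to differentiate the Möbius action $Z\mapsto \A_{\sigma+it}(x)\cdot Z$ explicitly at $Z=m^+(\sigma+it,x)$, simplify the derivative using the symplectic relations on $\A_{\sigma+it}(x)$, and then invoke a standard volume-distortion formula on $Sym_d\CC$. Writing $\A_{\sigma+it}(x)=\begin{pmatrix}\alpha&\beta\\ \gamma&\delta\end{pmatrix}$, we have $\tau_{A_{\sigma+it}}(x)=\gamma Z+\delta$ and the image $W=(\alpha Z+\beta)(\gamma Z+\delta)^{-1}$. Differentiating the relation $W(\gamma Z+\delta)=\alpha Z+\beta$ along tangent vectors in $Sym_d\CC$ yields the classical identity
$$dW=(\alpha-W\gamma)\, dZ\, (\gamma Z+\delta)^{-1}.$$

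The key algebraic step is to rewrite $\alpha-W\gamma$ cleanly. Because $\A_{\sigma+it}(x)\in Sp(2d,\CC)$, the identity $\A^T J\A=J$ gives $\alpha^T\gamma=\gamma^T\alpha$ and $\alpha^T\delta-\gamma^T\beta=I_d$. Exploiting $W^T=W$ and right-multiplying $(\alpha-W\gamma)^T=\alpha^T-\gamma^T W$ by $\gamma Z+\delta$ produces
$$(\alpha^T-\gamma^T W)(\gamma Z+\delta)=(\alpha^T\gamma-\gamma^T\alpha)Z+(\alpha^T\delta-\gamma^T\beta)=I_d,$$
so $\alpha-W\gamma=(\gamma Z+\delta)^{-T}$, and hence
$$dW=\tau_{A_{\sigma+it}}(x)^{-T}\, dZ\, \tau_{A_{\sigma+it}}(x)^{-1}.$$

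All that is left is the classical fact that, for $M\in GL(d,\CC)$, the complex-linear map $T_M:Sym_d\CC\to Sym_d\CC$, $H\mapsto M^THM$, has complex determinant $(\det M)^{d+1}$. This is verified by diagonalizing $M=\diag(\lambda_1,\dots,\lambda_d)$: in the basis $\{E_{ij}\}_{i\le j}$ of $Sym_d\CC$ the operator $T_M$ is diagonal with eigenvalue $\lambda_i\lambda_j$ on $E_{ij}$, and each $\lambda_k$ contributes with total exponent $2+(k-1)+(d-k)=d+1$. Passing from the complex determinant to the real Jacobian with respect to the Lebesgue density \eqref{leb on sym} squares the modulus; applying this with $M=\tau_{A_{\sigma+it}}(x)^{-1}$ produces $|\det\tau_{A_{\sigma+it}}(x)|^{-2(d+1)}$, as claimed. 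The only genuinely non-routine step is the transpose manoeuvre that unlocks the symplectic identities; once $dW$ takes the form $M^T\,dZ\,M$ everything else is bookkeeping.
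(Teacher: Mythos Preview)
Your proof is correct and follows essentially the same route as the paper: compute the derivative of the M\"obius action, use the symplectic relations to rewrite it as $H\mapsto (\gamma Z+\delta)^{-T}H(\gamma Z+\delta)^{-1}$, and then evaluate the Jacobian of $H\mapsto M^THM$ on $Sym_d\CC$. Your transpose manoeuvre for the identity $\alpha-W\gamma=(\gamma Z+\delta)^{-T}$ is in fact a little cleaner than the paper's direct verification; the only point to tighten is that ``diagonalizing $M$'' is not available for every $M\in GL(d,\CC)$, so you should either invoke density of diagonalizable matrices together with continuity of $M\mapsto\det T_M$, or (as the paper does) use that $M\mapsto\det T_M$ is multiplicative and reduce via polar decomposition.
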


\begin{proof}
	We only need to prove the following: for an arbitrary $Z\in Sym_d\CC, \begin{pmatrix}A&B\\C&D
	\end{pmatrix}\in Sp(2d,\CC)$ such that $CZ+D$ is invertible, the map 
	\begin{eqnarray*}
		Sym_d\CC&\to& Sym_d\CC\\
		X&\mapsto& (AX+B)(CX+D)^{-1}
	\end{eqnarray*} at point $Z$ has Jacobian (with respect to $d\lambda$) $|\det(CZ+D)|^{-2(d+1)}$.
	
	At first, by computation the tangent map of $X\to (AX+B)(CX+D)^{-1}$ at point $Z$ is: 
	\begin{eqnarray*}
		Sym_d\CC&\mapsto& Sym_d\CC\\
		H&\to&  (A-(AZ+B)(CZ+D)^{-1}C)\cdot H\cdot (CZ+D)^{-1}
	\end{eqnarray*}
	We need the following equation for symplectic group:
	\begin{lemma}\label{idty sp}
		For an arbitrary $Z\in Sym_d\CC, \begin{pmatrix}A&B\\C&D
		\end{pmatrix}\in Sp(2d,\CC)$ such that $CZ+D$ is invertible: we have that 
		\begin{equation}
		(A-(AZ+B)(CZ+D)^{-1}C)=(CZ+D)^{{-1}^{T}}
		\end{equation}
	\end{lemma}
	\begin{proof}
		Since $\begin{pmatrix}A&B\\C&D\end{pmatrix}\in Sp(2d,\CC)$, we have that
		\begin{equation}\label{prop sp}
		A^TC,B^TD\text{ are symmetric. } A^TD-C^TB=1
		\end{equation}
		Moreover, for all $Z\in Sym_d\CC$ such that $CZ+D$ is invertible, we have that 
		\begin{equation}\label{prop sp2}
		(AZ+B)(CZ+D)^{-1}\text{ is symmetric.}
		\end{equation}
		then
		\begin{equation}
		(AZ+B)(CZ+D)^{-1}=(D^T+ZC^T)^{-1}(B^T+ZA^T)
		\end{equation}
		By \eqref{prop sp2}, to prove Lemma \ref{idty sp}, we have to prove:
		\begin{equation}
		(A-(D^T+ZC^T)^{-1}(B^T+ZA^T)C)(CZ+D)^T=I_d
		\end{equation}
		Multiply by $D^T+ZC^T$ from the left to both sides, we need to prove
		\begin{equation}
		(D^T+ZC^T)A(CZ+D)^T=(B^T+ZA^T)C(CZ+D)^T+(D^T+ZC^T)
		\end{equation}which is the consequence of \eqref{prop sp}.
	\end{proof}
	
	Come back to the proof of Lemma \ref{jac leb}, by last lemma the tangent map of $X\to (AX+B)(CX+D)^{-1}$ at point $Z$ is 
	\begin{eqnarray*}
		Sym_d\CC&\mapsto& Sym_d\CC\\
		H&\to& (CZ+D)^{{-1}^{T}}\cdot H\cdot (CZ+D)^{-1}
	\end{eqnarray*}
	So Lemma \ref{jac leb} is the consequence of the following lemma:
	\begin{lemma}
		Suppose $g\in GL(d,\CC)$, the linear map 
		\begin{eqnarray*}
			Sym_d\CC&\to& Sym_d\CC\\
			H&\mapsto& g^TH g
		\end{eqnarray*}
		has jacobian $|\det g|^{2(d+1)}$ with respect to the density $d\lambda$ on $Sym_d\CC$.
	\end{lemma}
	\begin{proof}
		The Jacobian behaves well under the multiplication on $GL(d,\CC)$. By the polar decomposition of $GL(d,\CC)$, we only need to prove the lemma in the case $g$ is diagonal or $g$ is contained in the unitary group.
		When $g$ is diagonal, the lemma can be verified by computation directly. Notice the Jacobian of the map gives a homomorphism from $GL(d,\CC)$ to $(\RR^+,\times)$. So it maps the unitary group to the unique compact subgroup of $(\RR^+,\times)$: the identity. \end{proof}
	
\end{proof}

By our construction of $m^+$, for $t>0$, $\begin{bmatrix}m^+\\I_d\end{bmatrix}$ represents the unstable direction of the cocycle $\Lambda^d(\A)$. As a result, we have

\begin{equation}\label{ldtau}
L^d (A_{\sigma+it})=\int_X \ln|\det\tau_{A_{\sigma+it}(x)}(m^+(\sigma+it,x))|d\mu(x)
\end{equation}

Combining \eqref{ldtau} and Lemma \ref{qcomp}, \ref{jac leb}, we get 
\begin{equation}\label{ldq}
L^d(A_{\sigma+it})=\frac{1}{2(d+1)}\int_X-\ln q_{\sigma+it}(x)d\mu(x)
\end{equation}

\subsection{Boundary behavior of Lyapunov exponents}
As in Kotani theory and \cite{avila2013monotonic}, using the results of Theorem \ref{rot num}, we get the following lemma for boundary behavior of Lyapunov exponents.

\begin{lemma}\label{kotanimono}
	For almost every $\sigma_0\in \RR$ such  that $L(A_{\sigma_0})=0$, 
	\begin{equation}\label{l/t=l'}
	\lim_{t\to 0^+}\frac{L^d(A_{\sigma_0+it})}{t}-\frac{\partial L^d(A_{\sigma_0+it})}{\partial t}=0
	\end{equation}
\end{lemma}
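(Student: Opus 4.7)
The plan is to combine the Cauchy--Riemann equations on $\CC^+$ applied to $\zeta = \rho - iL^d$ (from Theorem \ref{rot num}) with the classical boundary behavior of Poisson extensions of monotone functions. A first preliminary observation: since the Lyapunov spectrum of a symplectic cocycle is symmetric about $0$, the hypothesis $L(A_{\sigma_0}) = 0$ forces $L^d(A_{\sigma_0}) = 0$; combined with the fact that $L^d$ is subharmonic on $\CC^+\cup\RR$ and nonnegative on $\CC^+$, upper semicontinuity gives $\lim_{t \to 0^+} L^d(A_{\sigma_0+it}) = 0$ for every such $\sigma_0$.

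The Cauchy--Riemann equations for $\zeta = \rho - iL^d$ on $\CC^+$ yield $\partial_t L^d(A_{\sigma+it}) = -\partial_\sigma \rho(\sigma, t)$, so the identity \eqref{l/t=l'} becomes
\begin{equation*}
\lim_{t \to 0^+}\left[\frac{L^d(A_{\sigma_0+it})}{t} + \frac{\partial \rho}{\partial \sigma}(\sigma_0, t)\right] = 0.
\end{equation*}
By Theorem \ref{rot num}, $\rho|_\RR$ is non-increasing, so Lebesgue's theorem for monotone functions supplies a finite derivative $\rho'(\sigma_0)$ at a.e. $\sigma_0 \in \RR$. Locally near any fixed $\sigma_0$ (after subtracting an affine term to control possible linear growth of $\rho$ at infinity), $\rho$ is bounded and therefore coincides with the Poisson extension of its boundary data on $\RR$. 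Integration by parts in the Poisson integral then gives
\begin{equation*}
\frac{\partial \rho}{\partial \sigma}(\sigma_0, t) = \int_\RR P_t(\sigma_0 - s)\, d\rho(s),
\end{equation*}
where $P_t$ denotes the Poisson kernel and $d\rho$ is the locally finite, nonpositive Lebesgue--Stieltjes measure of $\rho|_\RR$. By the Lebesgue differentiation theorem for Poisson integrals of measures, the right-hand side converges to the Radon--Nikodym derivative $\rho'(\sigma_0)$ as $t \to 0^+$ at a.e. $\sigma_0 \in \RR$.

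Combining the ingredients: at a.e. $\sigma_0 \in \{L(A_{\sigma_0}) = 0\}$ one has $\partial_t L^d(A_{\sigma_0+it}) \to -\rho'(\sigma_0)$ as $t \to 0^+$; since $L^d(A_{\sigma_0+it}) \to 0$ with a finite $t$-derivative limit, L'H\^{o}pital's rule (via the mean value theorem) forces $L^d(A_{\sigma_0+it})/t \to -\rho'(\sigma_0)$ as well, and the difference vanishes, proving \eqref{l/t=l'}. The main technical obstacle is the identification of $\rho$ with a Poisson extension near $\sigma_0$, since $\rho$ need not be bounded on all of $\CC^+$; this is overcome by a local cutoff argument (or, equivalently, by explicitly isolating the linear term in the Herglotz--Nevanlinna representation of $-\zeta$), after which the harmonic correction is smooth up to $\RR$ near $\sigma_0$ and contributes nothing in the limit, so that standard Poisson boundary theory for monotone boundary data applies.
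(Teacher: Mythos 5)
Your proof is correct and takes essentially the same route as the paper: upper semicontinuity gives $L^d(A_{\sigma_0+it})\to 0$, the Cauchy--Riemann equations convert $\partial_t L^d$ into $-\partial_\sigma\rho$, monotonicity of $\rho$ on $\RR$ yields a.e.\ existence of the boundary limit of $\partial_\sigma\rho$, and averaging the derivative (your L'H\^opital step, the paper's $\frac1t\int_{0^+}^t\partial_t L^d$) identifies $L^d(A_{\sigma_0+it})/t$ with that same limit. The only difference is that where the paper simply cites Theorem 2.5 of \cite{avila2013monotonic} for the a.e.\ convergence $\partial_\sigma\rho(\sigma_0+it)\to\rho'(\sigma_0)$, you re-derive that fact via the Poisson--Stieltjes representation of $\partial_\sigma\rho$ and Fatou-type differentiation, which is the standard proof of the cited result.
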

\begin{proof}
	We follow the proof of Theorem 2.5 in \cite{avila2013monotonic}. By upper semi-continuity of $L^d$, for every $\sigma_0\in \RR$ such  that $L(A_{\sigma_0})=0$, we have 
	\begin{equation}
	\lim_{t\to 0^+}L^d(A_{\sigma_0+it})=0
	\end{equation}
	Then
	\begin{eqnarray*}
		\lim_{t\to 0^+}\frac{L^d(A_{\sigma_0+it})}{t}&=&\lim_{t\to 0^+}\frac{L^d(A_{\sigma_0+it})-L^d(A_{\sigma_0+i0^+})}{t}\\
		&=&\lim_{t\to 0^+}\frac{\int_{0+}^t\frac{\partial L^d(A_{\sigma_0+it})}{\partial t}dt}{t}
	\end{eqnarray*}
	To prove Lemma \ref{kotanimono}, we only need to prove the following limit exists for almost every $\sigma_0\in \RR$.
	\begin{equation}\label{partialld}
	\lim_{t\to 0^+}\frac{\partial L^d(A_{\sigma_0+it})}{\partial t}
	\end{equation}
	
	By Cauchy-Riemann equations,
	\begin{equation}
	\frac{\partial L^d(A_{\sigma_0+it})}{\partial t}=-\frac{\partial \rho}{\partial \sigma}(\sigma_0+it)
	\end{equation}
	By Theorem \ref{rot num}, since the map $\rho$ is harmonic on $\CC^+$, continuous on $\CC^+\cup \RR$, non-increasing on $\RR$, one can say that for Lebesgue almost every $\sigma_0\in \RR$, (see Theorem 2.5 of \cite{avila2013monotonic})
	\begin{equation}
	\lim_{t\to 0}\frac{\partial \rho}{\partial \sigma}(\sigma_0+it)=\frac{d}{d\sigma}\rho(\sigma_0)
	\end{equation}
	Since $\rho$ is non-increasing, the derivative of $\rho$ on $\RR$ exists almost every where, which implies the limit in \eqref{partialld} exists for almost every $\sigma_0$.
\end{proof}
\subsection{Proof of Theorem \ref{m+=m-}}
Now we come back to the proof of theorem. By Lemma \ref{kotanimono}, for almost every $\sigma_0\in \RR$ such that $L(A_{\sigma_0})=0$, we have \eqref{l/t=l'} holds, $\lim_{t\to 0^+}\frac{\partial L^d(A_{\sigma_0+it})}{\partial t}$ exists and is finite. 

We claim that for these $\sigma_0$, equations (1).(2).of Theorem \ref{m+=m-} hold. From now to the end of the proof of Theorem \ref{m+=m-}, we denote for simplicity $m^\pm$ for $m^\pm(\sigma_0\pm it, x)$, $\tilde{m}^\pm$ for $m^\pm(\sigma_0\pm it,f(x))$, $\tau$ for $\tau_{A_{\sigma_0+it}(x)}(m^+(\sigma_0+it,x))$, $\tau_-$ for $\tau_{A_{\sigma_0-it}(x)}(m^-(\sigma-it,x))$, $A$ for $A_{\sigma_0+it}(x)$, $A_-$ for $A_{\sigma_0-it}(x)$, $L^d$ for $L^d(A_{\sigma_0+it})$, $q$ for $q_{\sigma_0+it}(x)$ .

\subsubsection{proof of (1). of Theorem \ref{m+=m-}}
Notice that $\A_{\sigma_0+it}=\begin{pmatrix}e^{-t}&\\&e^{t}\end{pmatrix}\A_{\sigma_0}$, we have an expression of $q$ by the singular values of $\tilde{m}$.
\begin{lemma}\label{q compu}
	\begin{equation}q^{-1}=e^{-2t(d^2+d)}\cdot\Pi_ {i=1}^d(\frac{e^{4t}(1-\sigma_i(\tilde{m}^+)^2)}{1-e^{4t}\sigma_i(\tilde{m}^+)^2})^{d+1}
	\end{equation}
\end{lemma}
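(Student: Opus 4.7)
The plan is to combine Lemma \ref{qcomp}, Lemma \ref{jac leb}, and the Bergman volume formula of Lemma \ref{Berg vol} into a preliminary expression for $q^{-1}$, and then to simplify it using both the factorization $\A_{\sigma_0+it} = \begin{pmatrix}e^{-t}I_d & 0\\ 0 & e^{t}I_d\end{pmatrix}\A_{\sigma_0}$ and the $U(d,d)\cap Sp(2d,\CC)$ invariance of $\A_{\sigma_0}$. Substituting Lemma \ref{jac leb} and Lemma \ref{Berg vol} into Lemma \ref{qcomp} gives at once
\begin{equation*}
q^{-1} = |\det\tau|^{2(d+1)}\cdot\prod_{i=1}^d\left(\frac{1-\sigma_i(\tm^+)^2}{1-\sigma_i(m^+)^2}\right)^{d+1},
\end{equation*}
with $m^+ = m^+(\sigma_0+it,x)$, $\tm^+ = m^+(\sigma_0+it, f(x))$, $\tau = \tau_{A_{\sigma_0+it}(x)}(m^+)$. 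The task reduces to expressing the right-hand side entirely in terms of $\sigma_i(\tm^+)$.

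For this, decompose $\A_{\sigma_0} = \begin{pmatrix}a&b\\c&d_0\end{pmatrix}$ in $d\times d$ blocks and set $P=am^++b$, $Q=cm^++d_0$. Applying $\A_{\sigma_0+it}$ to the column $\begin{pmatrix}m^+\\ I_d\end{pmatrix}$ produces $\begin{pmatrix}e^{-t}P\\ e^{t}Q\end{pmatrix}$, which after normalising the bottom block to $I_d$ yields $\tm^+ = e^{-2t}PQ^{-1}$ and $\tau = e^{t}Q$. Consequently $\sigma_i(PQ^{-1})^2 = e^{4t}\sigma_i(\tm^+)^2$ and $|\det\tau|^{2(d+1)} = e^{2td(d+1)}|\det Q|^{2(d+1)}$, so what remains is to eliminate $|\det Q|^2$ and $\prod_i(1-\sigma_i(m^+)^2)$ in favour of $\prod_i(1-e^{4t}\sigma_i(\tm^+)^2)$.

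This elimination comes from a higher-dimensional Schwarz-Pick identity associated with the $U(d,d)\cap Sp(2d,\CC)$ action on $SD_d$. The defining condition $\A_{\sigma_0}^\ast\bigl(\begin{smallmatrix}I_d&0\\ 0&-I_d\end{smallmatrix}\bigr)\A_{\sigma_0}=\bigl(\begin{smallmatrix}I_d&0\\ 0&-I_d\end{smallmatrix}\bigr)$, evaluated on the column $\bigl(\begin{smallmatrix}m^+\\ I_d\end{smallmatrix}\bigr)$, yields $Q^\ast Q - P^\ast P = I_d - (m^+)^\ast m^+$. Combining this with $\prod_i(1-\sigma_i(PQ^{-1})^2) = \det(I_d - P(Q^\ast Q)^{-1}P^\ast)$ and the Sylvester identity $\det(I-AB)=\det(I-BA)$,
\begin{equation*}
\prod_{i=1}^d(1-\sigma_i(PQ^{-1})^2) = \frac{\det(Q^\ast Q - P^\ast P)}{\det(Q^\ast Q)} = \frac{\det(I_d - (m^+)^\ast m^+)}{|\det Q|^2} = \frac{\prod_{i=1}^d(1-\sigma_i(m^+)^2)}{|\det Q|^2},
\end{equation*}
that is, $|\det Q|^2\prod_i(1-e^{4t}\sigma_i(\tm^+)^2) = \prod_i(1-\sigma_i(m^+)^2)$.

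Substituting this back cancels the $|\det Q|^{2(d+1)}$ and $\prod(1-\sigma_i(m^+)^2)^{d+1}$ factors, leaving $q^{-1} = e^{2td(d+1)}\prod_i(1-\sigma_i(\tm^+)^2)^{d+1}(1-e^{4t}\sigma_i(\tm^+)^2)^{-(d+1)}$. Distributing a uniform factor $e^{4t(d+1)}$ into each of the $d$ terms of the product and rewriting $e^{2td(d+1)} = e^{-2t(d^2+d)}\cdot e^{4td(d+1)}$ yields exactly the stated formula. I expect the only slightly delicate step to be the $U(d,d)$ identity: the block computation must be done carefully to keep track of the conjugation and of the symmetry $m^+ = (m^+)^T$; the rest is algebraic bookkeeping.
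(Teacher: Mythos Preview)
Your proof is correct. Both your argument and the paper's use the factorization $\A_{\sigma_0+it}=\bigl(\begin{smallmatrix}e^{-t}I_d&0\\0&e^tI_d\end{smallmatrix}\bigr)\A_{\sigma_0}$, but they exploit it differently. The paper inserts the intermediate point $e^{2t}\tm^+=\A_{\sigma_0}\cdot m^+$ and splits the Bergman Jacobian accordingly: since $\A_{\sigma_0}$ is an isometry for the Bergman metric (Lemma~\ref{schw}), the factor $\dfrac{V(m^+)}{V(e^{2t}\tm^+)}\Bigl|\dfrac{dm^+}{d(e^{2t}\tm^+)}\Bigr|$ equals $1$, and what remains is just the contribution of the scalar dilation $Z\mapsto e^{-2t}Z$, computed via Lemma~\ref{Berg vol}. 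You instead keep $|\det\tau|^{2(d+1)}$ and the full ratio $V(\tm^+)/V(m^+)$, and then eliminate $|\det Q|^2$ and $\prod_i(1-\sigma_i(m^+)^2)$ through the $U(d,d)$ identity $Q^\ast Q-P^\ast P=I_d-(m^+)^\ast m^+$. That identity is precisely the infinitesimal content of the Bergman isometry, so the two arguments are equivalent in spirit; the paper's packaging is shorter because it invokes the isometry as a black box, while yours is more self-contained and makes the underlying Schwarz--Pick mechanism explicit. Incidentally, the symmetry $m^+=(m^+)^T$ you flag as delicate is not actually needed in your computation---only the $U(d,d)$ relation is used.
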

\begin{proof}
	By Lemma \ref{qcomp} and the definition of $q$,
	\begin{eqnarray*}
		q^{-1}&=&\frac{V(m^+)}{V(\tm^+)}\cdot |\frac{dm^+}{d\tm^+}|\\
		&=&\frac{V(e^{2t}\tm^+)}{V(\tm^+)}\cdot \frac{V(m^+)}{V(e^{2t}\tm^+)}|\frac{dm^+}{de^{2t}\tm^+}|\cdot  e^{2t(d^2+d)}\\&&(\text{ since }SD_d\text{ has }d^2+d \text{ real dimension})\\
		&=&\frac{V(e^{2t}\tm^+)}{V(\tm^+)}\cdot e^{2t(d^2+d)}\\&&(\text{ since }m\mapsto e^{2t}\tm^+\text{ is an isometry for Bergman metric })\\
		&=&e^{-2t(d^2+d)}\cdot\Pi_{i=1}^d(\frac{e^{4t}(1-\sigma_i(\tilde{m})^2)}{1-e^{4t}\sigma_i(\tilde{m})^2})^{d+1}(\text{ by Lemma }\ref{Berg vol})
	\end{eqnarray*}
\end{proof}
Using that for $r>0, 0\leq s<e^{-r}$ we have 
\begin{equation}\label{ele ln ineq}\ln(\frac{e^r(1-s)}{1-e^rs})\geq \frac{r}{1-s}
\end{equation}
by last lemma, we get
\begin{eqnarray*}\ln q^{-1}&\geq& -2t(d^2+d)+\sum_{i=1}^d(d+1)\cdot\frac{4t}{1-\sigma_i(\tm^+)^2}\\&=&2(d+1)t\sum_{i=1}^d\frac{1+\sigma_i(\tm^+)^2}{1-\sigma_i(\tm^+)^2}
\end{eqnarray*}
By \eqref{ldq}, since $L^d=\frac{1}{2(d+1)}\int_X\ln q^{-1}d\mu$, we have 
\begin{equation}
L^d\geq t\int_X\sum_{i=1}^d\frac{1+\sigma_i(\tm^+)^2}{1-\sigma_i(\tm^+)^2}d\mu
\end{equation}
An analogous argument yields
\begin{equation}
L^d\geq t\int_X\sum_{i=1}^d\frac{1+\sigma_i(\tm^-)^2}{1-\sigma_i(\tm^-)^2}d\mu
\end{equation}

Then we conclude that
\begin{equation}\label{l/t}
\frac{L^d}{t}\geq \frac{1}{2}\int_X\sum_{i=1}^d(\frac{1+\sigma_i(\tm^+)^2}{1-\sigma_i(\tm^+)^2}+\frac{1+\sigma_i(\tm^-)^2}{1-\sigma_i(\tm^-)^2})d\mu
\end{equation}
By our assumption of $\sigma_0$, we get the proof of (1).

\subsubsection{map $\Lambda$ and basis $\mathfrak{B}(\cdot, \cdot)$ }
To prove (2)., we consider the following map:
\begin{definition}
	Let $Mat_{2d, d}(\CC)$ be the space of all $2d\times d$ complex matrices, we can define the map:
	\begin{eqnarray*}
		\Lambda: Mat_{2d, d}(\CC)&\to& \Lambda^d(\CC^{2d})\\
		X&\mapsto& x_1\wedge\cdots\cdots\wedge x_d
	\end{eqnarray*}
	where $\{x_i, 1\leq i\leq d\}$ are the column vectors of $X$.
\end{definition}
The following lemma lists some properties of $\Lambda$ we will use later. Recall that for $A\in GL(2d,\CC)$,  $\Lambda^k(A)$ is the natural action induced by $A$ on $\Lambda^k(\CC^{2d})$. For arbitrary two $2d\times d$ matrices $X,Y$, denote 
\begin{eqnarray}
D\Lambda(X)(Y):=\lim_{t\to 0}\frac{\Lambda(X+tY)-\Lambda(X)}{t}
\end{eqnarray}
\begin{lemma}\label{Lambdaprop}
	For $A\in GL(2d,\CC),B\in GL(d,\CC), X,Y\in Mat_{2d, d}(\CC)$, supppose that $X=(x_1,\dots, x_d), Y=(y_1,\dots,y_d)$, where $\{x_i, 1\leq i\leq d\}, \{y_i, 1\leq i\leq d\}$ are the column vectors of $X,Y$ respectively, then we have the following equations:
	\begin{eqnarray}
	\Lambda^d(A)\cdot\Lambda(X)&=&\Lambda(A\cdot X)\\
	D\Lambda(X)(Y)&=&\sum_{i=1}^d x_1\wedge\cdots\wedge x_{i-1}\wedge y_i\wedge x_{i+1}\wedge\cdots\wedge x_d\label{DLambdaXY}\\
	\text{           }D\Lambda(AX)(AY)&=&\Lambda^d(A)\cdot D\Lambda(X)(Y)\label{DLambdachangebase}\\
	\Lambda(X\cdot B)&=&\det(B)\Lambda(X)\label{rightmult}
	\end{eqnarray}
	
\end{lemma}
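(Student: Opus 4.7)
The plan is to verify each of the four equations by direct computation from the definition of $\Lambda$ and the standard properties of the exterior algebra. None of these should require any deep argument; they are essentially unfolding the relevant definitions, with the only subtlety being bookkeeping of indices and signs.

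First, for the identity $\Lambda^d(A)\cdot \Lambda(X)=\Lambda(AX)$, I would simply note that if $x_1,\dots,x_d$ are the columns of $X$ then the columns of $AX$ are $Ax_1,\dots,Ax_d$. The natural action of $\Lambda^d(A)$ on a decomposable vector is $\Lambda^d(A)(x_1\wedge\cdots\wedge x_d)=Ax_1\wedge\cdots\wedge Ax_d$, which gives $\Lambda(AX)$ immediately.

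Second, for the formula for $D\Lambda(X)(Y)$, the plan is to expand $\Lambda(X+tY)=(x_1+ty_1)\wedge\cdots\wedge(x_d+ty_d)$ by multilinearity. The constant term (in $t$) is $\Lambda(X)$, the term linear in $t$ is exactly $\sum_i x_1\wedge\cdots\wedge x_{i-1}\wedge y_i\wedge x_{i+1}\wedge\cdots\wedge x_d$, and all other terms are $O(t^2)$. Dividing by $t$ and passing to the limit yields \eqref{DLambdaXY}. Equation \eqref{DLambdachangebase} is then a corollary: because $\Lambda(A(X+tY))=\Lambda^d(A)\Lambda(X+tY)$ by the first identity, and because $\Lambda^d(A)$ is a linear map, we may differentiate under $\Lambda^d(A)$ at $t=0$ to get $D\Lambda(AX)(AY)=\Lambda^d(A)\cdot D\Lambda(X)(Y)$.

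Third, for $\Lambda(XB)=\det(B)\Lambda(X)$, I would write $B=(b_{ij})$, so that the $j$-th column of $XB$ is $\sum_{i}b_{ij}x_i$. Expanding the wedge product by multilinearity,
\begin{equation*}
\Lambda(XB)=\sum_{i_1,\dots,i_d}b_{i_11}\cdots b_{i_dd}\,x_{i_1}\wedge\cdots\wedge x_{i_d}.
\end{equation*}
By antisymmetry only the permutations $i_k=\sigma(k)$ survive, and each contributes $\mathrm{sgn}(\sigma)b_{\sigma(1)1}\cdots b_{\sigma(d)d}$ times $x_1\wedge\cdots\wedge x_d$. Summing over $\sigma\in S_d$ yields the Leibniz formula $\det(B)\Lambda(X)$.

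Since every step is a direct unwinding of definitions, there is no real obstacle here; the lemma is a routine collection of facts about the exterior algebra that will be used repeatedly in the sequel, and the only thing to be careful about is the sign conventions in \eqref{DLambdaXY} and the use of the Leibniz determinant formula for \eqref{rightmult}.
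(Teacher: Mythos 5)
Your proposal is correct and matches the paper's approach: the paper simply states ``By computation directly,'' and your direct verification by multilinearity, the induced action of $\Lambda^d(A)$ on decomposable vectors, and the Leibniz formula for $\det(B)$ is exactly the intended computation.
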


\begin{proof}By computation directly.
\end{proof}

From now on we identify $\Lambda^{2d}(\CC^{2d})$ with $\CC$ as the following:\\
\textbf{Identification}
If $\varpi\in \Lambda^{2d}(\CC^{2d})=c(\varpi)\cdot e_1\wedge\dots\wedge e_{2d}$, then we identify $\varpi$ with $c(\varpi)$. Here $e_i$ are standard basis of $\CC^{2d}$.

Now we define a collection of basis of $\Lambda^d(\CC^{2d})$ for later use.
\begin{definition}
	Suppose $X,Y\in Mat_{2d,d}(\CC)$ are with rank $d$, and the column vectors $\{x_i, 1\leq i\leq d\}, \{y_i, 1\leq i\leq d\}$ of $X,Y$ are linearly independent, then the following subset in $\Lambda^d(\CC^{2d})$ forms a basis of $\Lambda^d(\CC^{2d})$:
	\begin{eqnarray*}\{x_{i_1}\wedge\cdots\wedge x_{i_{|I|}}\wedge y_{j_1}\wedge\cdots\wedge y_{j_{|J|}}: I,J\subset \{1,\dots, d\},\\
		|I|+|J|=d,i_1<i_2<\dots, j_1<j_2<\dots\}
	\end{eqnarray*}
	denoted by $\mathfrak{B}(X,Y)$.
	
	For any element $\omega\in\Lambda^d(\CC^{2d})$, the coefficient of $x_1\wedge\cdots\wedge x_d$ for the expansion of $\omega$ with respect to the basis $\mathfrak{B}(X,Y)$ is 
	\begin{equation}\label{wedgecoef}
	\frac{\omega\wedge(y_1\wedge\cdots\wedge y_d)}{x_1\wedge\cdots\wedge x_d\wedge y_1\wedge\cdots\wedge y_d}
	\end{equation}
	Here we use the identification above.
\end{definition}

We will use the following lemma later.
\begin{lemma}\label{d lambda expansion on B}Suppose $X,Z\in Mat_{2d,d}(\CC)$ are with rank $d$, and the column vectors $\{x_i, 1\leq i\leq d\}, \{z_i, 1\leq i\leq d\}$ of $X,Z$ are linearly independent, then the coefficient of $\Lambda(X)$ for the expansion of $D\Lambda(X)(Y)$ with respect to the basis $\mathfrak{B}(X,Z)$ is the trace of the matrix $\begin{pmatrix}X,Z
	\end{pmatrix}^{-1}\cdot Y$. (the trace of a $2d\times d$ matrix is the sum of the diagonal entries)
\end{lemma}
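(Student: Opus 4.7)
The plan is to directly compute $D\Lambda(X)(Y)$ using the explicit formula \eqref{DLambdaXY} and then read off the coefficient of $\Lambda(X) = x_1 \wedge \cdots \wedge x_d$ in the expansion with respect to $\mathfrak{B}(X,Z)$. The main point is that because $\{x_1,\dots,x_d,z_1,\dots,z_d\}$ forms a basis of $\CC^{2d}$, the matrix $(X,Z)^{-1}Y$ encodes precisely the coordinates of the columns of $Y$ in this basis, and wedge antisymmetry then collapses most of the contributions.

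In detail, I would write $(X,Z)^{-1}Y = \begin{pmatrix} A \\ B \end{pmatrix}$ with $A,B\in\mathrm{Mat}_{d,d}(\CC)$, so that $y_i = \sum_j A_{ji}\,x_j + \sum_j B_{ji}\,z_j$ for each $i$. By the convention in the lemma, $\operatorname{tr}((X,Z)^{-1}Y) = \sum_{i=1}^d A_{ii}$. Substituting this decomposition of $y_i$ into the $i$-th summand
\[
x_1 \wedge \cdots \wedge x_{i-1} \wedge y_i \wedge x_{i+1} \wedge \cdots \wedge x_d
\]
of \eqref{DLambdaXY}, every term $A_{ji}\,x_j$ with $j\ne i$ vanishes by antisymmetry, while every term $B_{ji}\,z_j$ produces a basis element of $\mathfrak{B}(X,Z)$ with a single $z_j$ in the $i$-th slot, hence distinct from $\Lambda(X)$. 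Only the contribution $A_{ii}\,x_1\wedge\cdots\wedge x_d$ survives on $\Lambda(X)$. Summing over $i=1,\dots,d$ gives the claimed coefficient $\sum_{i=1}^d A_{ii} = \operatorname{tr}((X,Z)^{-1}Y)$.

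There is no real obstacle: the statement is a purely linear-algebraic consequence of the antisymmetry of the wedge product combined with the identification of $(X,Z)^{-1}Y$ as the coordinate matrix of $Y$ in the basis $\{x_j\}\cup\{z_j\}$. Alternatively, one could bypass the case analysis by applying formula \eqref{wedgecoef} to $\omega = D\Lambda(X)(Y)$: expanding $D\Lambda(X)(Y)\wedge z_1\wedge\cdots\wedge z_d$ via \eqref{DLambdaXY} and using $\det$-expansion along the columns of $(X,Z)$ yields $\operatorname{tr}((X,Z)^{-1}Y)\cdot(x_1\wedge\cdots\wedge x_d\wedge z_1\wedge\cdots\wedge z_d)$, giving the same conclusion.
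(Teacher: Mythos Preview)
Your proof is correct and follows essentially the same approach as the paper: write $(X,Z)^{-1}Y=\begin{pmatrix}A\\B\end{pmatrix}$ to express each $y_i$ in the basis $\{x_j\}\cup\{z_j\}$, substitute into \eqref{DLambdaXY}, and use antisymmetry to see that only the diagonal contributions $A_{ii}$ land on $\Lambda(X)$. The alternative route via \eqref{wedgecoef} you mention at the end is a nice observation but is not used in the paper.
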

\begin{proof}Suppose $\begin{pmatrix}X,Z
	\end{pmatrix}^{-1}\cdot Y=\begin{pmatrix}a_{ij}\\b_{ij}
	\end{pmatrix}_{1\leq i\leq d, 1\leq j\leq d}$, then 
	\begin{equation}
	Y=\begin{pmatrix}\dots&\sum_{k=1}^d (a_{ki}x_k+b_{ki}z_k)&\dots
	\end{pmatrix}_{1\leq i\leq d}
	\end{equation}
	By \eqref{DLambdaXY} we get that 
	\begin{eqnarray*}
		D\Lambda(X)(Y)&=&\sum_{i=1}^d x_1\wedge\cdots\wedge x_{i-1}\wedge y_i\wedge x_{i+1}\wedge\cdots\wedge x_d\\
		&=&\sum_{i=1}^d x_1\wedge\cdots\wedge x_{i-1}\wedge (\sum_{k=1}^d(a_{ki}x_k+b_{ki}z_k))\wedge\cdots\wedge x_d\\
		&=&\sum_{i=1}^d a_{ii}x_1\wedge\cdots\wedge x_d+\text{ other term in }\mathfrak{B}(X,Z)\\
		&=&(\text{trace of }\begin{pmatrix}X,Z
		\end{pmatrix}^{-1}\cdot Y) \Lambda(X)+\\&&\text{ other terms in }\mathfrak{B}(X,Z)
	\end{eqnarray*}which gives the proof.\end{proof}
\subsubsection{proof of (2). of Theorem \ref{m+=m-}}
Come back to the proof of (2). At first, 
\begin{equation}\label{beftauinv}\A\begin{pmatrix}m^+\\ I_d\end{pmatrix}=\begin{pmatrix}\tm^+\\I_d\end{pmatrix}\tau
\end{equation}
Take the inverse,

\begin{equation}\label{tauinv}
\A^{-1}\begin{pmatrix}\tm^+\\I_d\end{pmatrix}=\begin{pmatrix}m^+\\ I_d\end{pmatrix}\tau^{-1}
\end{equation}

Let the operator $\Lambda$ acting on both sides of \eqref{tauinv}, we get:
\begin{eqnarray}
\Lambda(\A^{-1}\begin{pmatrix}\tm^+\\I_d\end{pmatrix})&=&\Lambda(\begin{pmatrix}m^+\\ I_d\end{pmatrix}\tau^{-1})
\end{eqnarray}
Then differentiate with respect to $t$, 
\begin{eqnarray}\label{tauinv'}
\frac{\partial}{\partial t}\Lambda(\A^{-1}\begin{pmatrix}\tm^+\\I_d\end{pmatrix})&=&\frac{\partial}{\partial t}(\frac{1}{\det\tau}\Lambda(\begin{pmatrix}m^+\\ I_d\end{pmatrix})(\text{ by }\eqref{rightmult})
\end{eqnarray}
Using Lemma \ref{Lambdaprop} to compute the derivative,  we get
\begin{eqnarray*}
	\text{left of }\eqref{tauinv'} &=& D\Lambda(\A^{-1}\begin{pmatrix}\tm^+\\I_d
	\end{pmatrix})(\frac{\partial}{\partial t} (\A^{-1}\begin{pmatrix}\tm^+\\I_d
\end{pmatrix}))\\
&=&D\Lambda(\A^{-1}\begin{pmatrix}\tm^+\\I_d
\end{pmatrix})(-\A^{-1}(\frac{\partial}{\partial t}\A)\A^{-1}\begin{pmatrix}\tm^+\\I_d
\end{pmatrix}\\
&&+\A^{-1}\begin{pmatrix}I_d\\0
\end{pmatrix}\frac{\partial \tm^+}{\partial t}
)\\
&=&-\Lambda^d(\A^{-1})\cdot D\Lambda(\begin{pmatrix}\tm^+\\I_d\end{pmatrix})(\begin{pmatrix}-\tm^+\\I_d\end{pmatrix}-\begin{pmatrix}I_d\\0
\end{pmatrix}\frac{\partial \tm^+}{\partial t}
)
\end{eqnarray*}
where we use \eqref{DLambdachangebase} and $\frac{\partial}{\partial t}\A=\begin{pmatrix}-I_d&\\&I_d
\end{pmatrix}\A$ in the last equality.
\begin{eqnarray*}\text{right of }\eqref{tauinv'}&=&-\frac{1}{(\det\tau)^2}\frac{\partial \det\tau}{\partial t}\Lambda(\begin{pmatrix}m^+\\ I_d\end{pmatrix})\\
	&&+\frac{1}{\det\tau}D\Lambda(\begin{pmatrix}m^+\\I_d\end{pmatrix})(\begin{pmatrix}I_d\\0\end{pmatrix}\frac{\partial m^+}{\partial t})
\end{eqnarray*}
Notice that
\begin{eqnarray*}\Lambda^d(\A)\cdot \Lambda(\begin{pmatrix}m^+\\ I_d\end{pmatrix})&=&\Lambda(\A\begin{pmatrix}m^+\\ I_d\end{pmatrix})\\
	&=&\Lambda(\begin{pmatrix}\tm^+\\ I_d\end{pmatrix}\tau)\\
	&=&\det\tau\cdot \Lambda(\begin{pmatrix}\tm^+\\ I_d\end{pmatrix})
\end{eqnarray*}
Applying $-\Lambda^d(\A)$ to both sides of \eqref{tauinv'}, by previous discussion we have\\
\textbf{the key equation}
\begin{eqnarray*}
	D\Lambda(\begin{pmatrix}\tm^+\\I_d\end{pmatrix})(\begin{pmatrix}-\tm^+\\I_d\end{pmatrix})&-&D\Lambda(\begin{pmatrix}\tm^+\\I_d\end{pmatrix})(\begin{pmatrix}I_d\\0
	\end{pmatrix}\frac{\partial \tm^+}{\partial t}
	)=\\
	\frac{1}{\det\tau}\frac{\partial \det\tau}{\partial t}\Lambda(\begin{pmatrix}\tm^+\\ I_d\end{pmatrix})&-&
	\frac{1}{\det\tau}\Lambda^d(\A)\cdot D\Lambda(\begin{pmatrix}m^+\\I_d\end{pmatrix})(\begin{pmatrix}I_d\\0\end{pmatrix}\frac{\partial m^+}{\partial t})
\end{eqnarray*}

\subsubsection{the key equation}
To analyse each term of the key equation, for $\begin{pmatrix}\tm^+\\I_d
\end{pmatrix}, \begin{pmatrix}I_d\\\overline{\tm^-}
\end{pmatrix}\in Mat_{2d,d}(\CC)$, we consider the basis 
$\mathfrak{B}(\begin{pmatrix}\tm^+\\I_d \end{pmatrix}, 
\begin{pmatrix} 
I_d \\ \overline{\tm^-}\end{pmatrix})$. This is actually a basis since $\norm{\tm^+},\norm{\overline{\tm^-}}<1$, 
$\det\begin{pmatrix}\tm^+& I_d\\I_d&\overline{\tm^-}
\end{pmatrix}\neq 0$.

For the key equation, the following lemmas give the coefficients of $\Lambda(\begin{pmatrix}\tm^+\\I_d \end{pmatrix} )
$ for the expansion of each term with respect to the basis $\mathfrak{B}(\begin{pmatrix}\tm^+\\I_d \end{pmatrix}, 
\begin{pmatrix} 
I_d \\ \overline{\tm^-}\end{pmatrix})$.
\begin{lemma}\label{coef1} The coeffcient of $\Lambda(\begin{pmatrix}\tm^+\\I_d \end{pmatrix})$ for the expansion of $D\Lambda(\begin{pmatrix}\tm^+\\I_d\end{pmatrix})(\begin{pmatrix}-\tm^+\\I_d\end{pmatrix})$ with respect to the basis $\mathfrak{B}(\begin{pmatrix}\tm^+\\I_d \end{pmatrix}, 
	\begin{pmatrix} 
	I_d \\ \overline{\tm^-}\end{pmatrix})$ is the trace of $(I_d-\overline{\tm^-}\tm^+)^{-1}(I_d+\overline{\tm^-}\tm^+)$.
\end{lemma}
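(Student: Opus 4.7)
The plan is to apply Lemma \ref{d lambda expansion on B} directly with the assignments
$X=\begin{pmatrix}\tm^+\\I_d\end{pmatrix}$,
$Z=\begin{pmatrix}I_d\\\overline{\tm^-}\end{pmatrix}$,
$Y=\begin{pmatrix}-\tm^+\\I_d\end{pmatrix}$.
First I would check that the hypothesis of the lemma is satisfied, namely that the columns of $X$ and of $Z$ together form a linearly independent family; equivalently, that the $2d\times 2d$ block matrix $(X,Z)=\begin{pmatrix}\tm^+ & I_d\\ I_d & \overline{\tm^-}\end{pmatrix}$ is invertible. Since $\|\tm^+\|<1$ and $\|\overline{\tm^-}\|<1$, the product $\overline{\tm^-}\tm^+$ has spectral radius strictly less than $1$, so $I_d-\overline{\tm^-}\tm^+$ is invertible; by a standard Schur-complement argument this also gives invertibility of $(X,Z)$.

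By Lemma \ref{d lambda expansion on B} the coefficient we want is the trace (in the sense of summing the first $d$ diagonal entries of a $2d\times d$ matrix) of the product $(X,Z)^{-1}Y$. So the computation reduces to finding the top $d\times d$ block $a$ of $(X,Z)^{-1}Y$, i.e., the top block of the solution of
\begin{equation*}
\begin{pmatrix}\tm^+ & I_d\\ I_d & \overline{\tm^-}\end{pmatrix}\begin{pmatrix}a\\ b\end{pmatrix}=\begin{pmatrix}-\tm^+\\ I_d\end{pmatrix}.
\end{equation*}
The second equation gives $b=I_d-a$ after using the first to substitute; more directly, from $\tm^+ a+b=-\tm^+$ we get $b=-\tm^+(I_d+a)$, and plugging this into $a+\overline{\tm^-}b=I_d$ yields $(I_d-\overline{\tm^-}\tm^+)a=I_d+\overline{\tm^-}\tm^+$, hence
\begin{equation*}
a=(I_d-\overline{\tm^-}\tm^+)^{-1}(I_d+\overline{\tm^-}\tm^+).
\end{equation*}
Taking the trace of $a$ yields exactly the expression claimed in the statement.

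Since everything reduces to solving a $2\times 2$ block linear system and reading off a trace, there is no real obstacle in this lemma; the only point requiring minimal care is the invertibility of $I_d-\overline{\tm^-}\tm^+$, which is immediate from the bounds $\|\tm^\pm\|<1$ coming from the fact that $m^\pm(\sigma_0\pm it,\cdot)\in SD_d$ for $t>0$.
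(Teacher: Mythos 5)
Your proposal is correct and takes essentially the same route as the paper: both reduce the claim to Lemma \ref{d lambda expansion on B} and identify the coefficient with the trace of the top $d\times d$ block of $\begin{pmatrix}\tm^+&I_d\\I_d&\overline{\tm^-}\end{pmatrix}^{-1}\begin{pmatrix}-\tm^+\\I_d\end{pmatrix}$, the paper by writing out the full block inverse and you by solving the block linear system, both arriving at $(I_d-\overline{\tm^-}\tm^+)^{-1}(I_d+\overline{\tm^-}\tm^+)$. (The passing remark that the second equation gives $b=I_d-a$ is a slip, but your subsequent direct derivation $b=-\tm^+(I_d+a)$, leading to $(I_d-\overline{\tm^-}\tm^+)a=I_d+\overline{\tm^-}\tm^+$, is correct and is what carries the argument.)
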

\begin{proof}
	
	By Lemma \ref{d lambda expansion on B},  to prove lemma \ref{coef1}, we only need to compute \begin{equation}\begin{pmatrix}\tm^+&I_d\\I_d&\overline{\tm^-}
	\end{pmatrix}^{-1} \cdot \begin{pmatrix}-\tm^+\\I_d
	\end{pmatrix}
	\end{equation}
	In fact $\begin{pmatrix}\tm^+&I_d\\I_d&\overline{\tm^-}
	\end{pmatrix}^{-1}=\begin{pmatrix}(\overline{\tm^-}\tm^+ -I_d)^{-1}
	\overline{\tm^-}& (I_d-\overline{\tm^-}\tm^+)^{-1}
	\\I_d+\tm^+(I_d-\overline{\tm^-}\tm^+)^{-1}
	\overline{\tm^-}&-\tm^+(I_d-\overline{\tm^-}\tm^+)^{-1}
	\end{pmatrix}
	$
	Then we get 
	\begin{equation}\begin{pmatrix}\tm^+&I_d\\I_d&\overline{\tm^-}
	\end{pmatrix}^{-1} \cdot \begin{pmatrix}-\tm^+\\I_d
	\end{pmatrix}=\begin{pmatrix}(I_d-\overline{\tm^-}\tm^+)^{-1}(I_d+\overline{\tm^-}\tm^+)\\ \ast
	\end{pmatrix}
	\end{equation}
	By Lemma \ref{d lambda expansion on B}, we get the proof of Lemma \ref{coef1}.
\end{proof}
\begin{lemma}\label{coef2}
	The coeffcient of $\Lambda(\begin{pmatrix}\tm^+\\I_d \end{pmatrix})$ for the expansion of $D\Lambda(\begin{pmatrix}\tm^+\\I_d\end{pmatrix})(\begin{pmatrix}I_d\\0
	\end{pmatrix}\frac{\partial \tm^+}{\partial t}
	)$ with respect to the basis $\mathfrak{B}(\begin{pmatrix}\tm^+\\I_d \end{pmatrix}, 
	\begin{pmatrix} 
	I_d \\ \overline{\tm^-}\end{pmatrix})$ is $$\det\begin{pmatrix}\tm^+&I_d\\I_d&\overline{\tm^-}
	\end{pmatrix}^{-1}\cdot D\Lambda(\begin{pmatrix}\tm^+\\I_d\end{pmatrix})(\begin{pmatrix}I_d\\0
	\end{pmatrix}\frac{\partial \tm^+}{\partial t}
	)\wedge \Lambda(\begin{pmatrix} 
	I_d \\ \overline{\tm^-}\end{pmatrix})$$
\end{lemma}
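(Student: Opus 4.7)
The plan is to apply equation \eqref{wedgecoef} directly. Set $X = \begin{pmatrix}\tm^+\\I_d\end{pmatrix}$ and $Y = \begin{pmatrix}I_d\\\overline{\tm^-}\end{pmatrix}$, and let $\omega = D\Lambda(X)(\begin{pmatrix}I_d\\0\end{pmatrix}\frac{\partial \tm^+}{\partial t})$. First one needs to verify that the family $\mathfrak{B}(X,Y)$ is indeed a basis of $\Lambda^d(\CC^{2d})$, i.e.\ that the $2d$ columns are linearly independent; this amounts to checking that $\det\begin{pmatrix}\tm^+&I_d\\I_d&\overline{\tm^-}\end{pmatrix}\neq 0$. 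Since $\norm{\tm^+}<1$ and $\norm{\overline{\tm^-}}<1$ (both $\tm^+$ and $\tm^-$ lie in $SD_d$), the matrix $I_d-\overline{\tm^-}\tm^+$ is invertible, and a short Schur-complement calculation shows this is exactly what is required.

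With the basis property in hand, formula \eqref{wedgecoef} says that the coefficient of $\Lambda(X)$ in the expansion of $\omega$ with respect to $\mathfrak{B}(X,Y)$ equals
\begin{equation*}
\frac{\omega\wedge\Lambda(Y)}{\Lambda(X)\wedge\Lambda(Y)}.
\end{equation*}
The remaining step is to identify the denominator: under the identification $\Lambda^{2d}(\CC^{2d})\cong\CC$ fixed in the paper, the wedge of the $d$ columns of $X$ with the $d$ columns of $Y$ (in that order) is precisely the determinant of the $2d\times 2d$ block matrix $(X\mid Y) = \begin{pmatrix}\tm^+&I_d\\I_d&\overline{\tm^-}\end{pmatrix}$. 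Substituting this into the denominator and rewriting gives exactly the claimed formula, namely
\begin{equation*}
\det\begin{pmatrix}\tm^+&I_d\\I_d&\overline{\tm^-}\end{pmatrix}^{-1}\cdot D\Lambda(X)\bigl(\begin{pmatrix}I_d\\0\end{pmatrix}\tfrac{\partial \tm^+}{\partial t}\bigr)\wedge\Lambda(Y).
\end{equation*}

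In short, there is no genuine obstacle here: the lemma is a direct specialization of \eqref{wedgecoef} once the block-matrix interpretation of the wedge-product denominator is made explicit. Unlike Lemma \ref{coef1}, where one needs the further algebraic input of Lemma \ref{d lambda expansion on B} to simplify the coefficient to a trace, here the coefficient itself is left in its raw wedge-product form for later use in analyzing the key equation, so no additional simplification is needed in the proof.
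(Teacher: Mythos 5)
Your proof is correct and follows the same route as the paper, which simply invokes \eqref{wedgecoef}; your extra remarks (checking that $\mathfrak{B}$ is a basis via invertibility of the block matrix, and identifying the denominator $\Lambda(X)\wedge\Lambda(Y)$ with $\det\begin{pmatrix}\tm^+&I_d\\I_d&\overline{\tm^-}\end{pmatrix}$ under the fixed identification of $\Lambda^{2d}(\CC^{2d})$ with $\CC$) only make explicit what the paper states just before the lemma. No gap.
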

\begin{proof}Using \eqref{wedgecoef}.
\end{proof}
\begin{lemma}\label{coef3}
	The coeffcient of $\Lambda(\begin{pmatrix}\tm^+\\I_d \end{pmatrix})$ for the expansion of $$\frac{1}{\det\tau}\Lambda^d(\A)\cdot D\Lambda(\begin{pmatrix}m^+\\I_d\end{pmatrix})(\begin{pmatrix}I_d\\0\end{pmatrix}\frac{\partial m^+}{\partial t})$$ with respect to the basis $\mathfrak{B}(\begin{pmatrix}\tm^+\\I_d \end{pmatrix}, 
	\begin{pmatrix} 
	I_d \\ \overline{\tm^-}\end{pmatrix})$ is 
	$$\det\begin{pmatrix}m^+&I_d\\I_d&\overline{m^-}
	\end{pmatrix}^{-1}\cdot D\Lambda(\begin{pmatrix}m^+\\I_d\end{pmatrix})(\begin{pmatrix}I_d\\0
	\end{pmatrix}\frac{\partial m^+}{\partial t}
	)\wedge \Lambda(\begin{pmatrix} 
	I_d \\ \overline{m^-}\end{pmatrix})$$
\end{lemma}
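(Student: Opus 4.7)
The plan is to proceed in the same spirit as Lemma \ref{coef2}, applying the wedge-product formula \eqref{wedgecoef} to compute the coefficient of $\Lambda(\begin{pmatrix}\tm^+\\I_d\end{pmatrix})$ in the expansion with respect to the basis $\mathfrak{B}(\begin{pmatrix}\tm^+\\I_d\end{pmatrix}, \begin{pmatrix}I_d\\\overline{\tm^-}\end{pmatrix})$. The coefficient we seek equals
\[
\frac{1}{\det\begin{pmatrix}\tm^+&I_d\\I_d&\overline{\tm^-}\end{pmatrix}}\cdot\frac{1}{\det\tau}\,\Lambda^d(\A)\Bigl(D\Lambda(\begin{pmatrix}m^+\\I_d\end{pmatrix})(\begin{pmatrix}I_d\\0\end{pmatrix}\tfrac{\partial m^+}{\partial t})\Bigr)\wedge \Lambda(\begin{pmatrix}I_d\\\overline{\tm^-}\end{pmatrix}),
\]
so the task reduces to transporting the action of $\Lambda^d(\A)$ onto the second factor of the wedge.

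The mechanism for this transport is the identification $\Lambda^{2d}(\CC^{2d})\cong\CC$ and the relation $\Lambda^d(\A)(\omega)\wedge\eta = \det(\A)\,\omega\wedge\Lambda^d(\A^{-1})(\eta)$, which follows from $\Lambda^{2d}(\A)=\det(\A)\cdot\mathrm{id}$. Since $\A\in Sp(2d,\CC)\cap U(d,d)$ by Lemma \ref{caylay}(1), we have $\det(\A)=1$. Next, inverting the two invariance relations \eqref{beftauinv} and Lemma \ref{conj m-} yields $\A^{-1}\begin{pmatrix}I_d\\\overline{\tm^-}\end{pmatrix}=\begin{pmatrix}I_d\\\overline{m^-}\end{pmatrix}\overline{\tau_-}^{-1}$, and \eqref{rightmult} then gives $\Lambda^d(\A^{-1})\Lambda(\begin{pmatrix}I_d\\\overline{\tm^-}\end{pmatrix})=\frac{1}{\det\overline{\tau_-}}\Lambda(\begin{pmatrix}I_d\\\overline{m^-}\end{pmatrix})$. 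Substituting this in yields the wedge $D\Lambda(\begin{pmatrix}m^+\\I_d\end{pmatrix})(\begin{pmatrix}I_d\\0\end{pmatrix}\frac{\partial m^+}{\partial t})\wedge\Lambda(\begin{pmatrix}I_d\\\overline{m^-}\end{pmatrix})$ multiplied by a scalar factor $\frac{1}{\det\tau\cdot\det\overline{\tau_-}\cdot\det\begin{pmatrix}\tm^+&I_d\\I_d&\overline{\tm^-}\end{pmatrix}}$.

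It remains to identify this scalar factor with $\det\begin{pmatrix}m^+&I_d\\I_d&\overline{m^-}\end{pmatrix}^{-1}$. This is the key algebraic identity, and it is obtained by stacking the two invariance relations into a single block matrix equation
\[
\A\begin{pmatrix}m^+&I_d\\I_d&\overline{m^-}\end{pmatrix}=\begin{pmatrix}\tm^+&I_d\\I_d&\overline{\tm^-}\end{pmatrix}\begin{pmatrix}\tau&0\\0&\overline{\tau_-}\end{pmatrix}
\]
and taking determinants. Using $\det(\A)=1$, this gives precisely
\[
\det\begin{pmatrix}m^+&I_d\\I_d&\overline{m^-}\end{pmatrix}=\det\tau\cdot\det\overline{\tau_-}\cdot\det\begin{pmatrix}\tm^+&I_d\\I_d&\overline{\tm^-}\end{pmatrix},
\]
which is the desired cancellation, completing the proof.

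The one point requiring care is the sign/normalization when identifying $\Lambda^{2d}(\CC^{2d})$ with $\CC$ and when moving $\Lambda^d(\A)$ across the wedge; but since $\det(\A)=1$ on the symplectic group, no extra constants appear, and the formula follows cleanly. This is the main (and essentially only) obstacle, and it is dispatched by the block-matrix determinant identity above.
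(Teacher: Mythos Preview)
Your proof is correct and follows essentially the same approach as the paper: apply \eqref{wedgecoef}, transport $\Lambda^d(\A)$ across the wedge via $\Lambda^{2d}(\A)=\det(\A)\cdot\id$ together with Lemma \ref{conj m-}, and reduce to the determinant identity $\det(\A)\det\begin{pmatrix}m^+&I_d\\I_d&\overline{m^-}\end{pmatrix}=\det\tau\,\det\overline{\tau_-}\,\det\begin{pmatrix}\tm^+&I_d\\I_d&\overline{\tm^-}\end{pmatrix}$, which you prove exactly as the paper does by stacking \eqref{beftauinv} and Lemma \ref{conj m-} into a single block equation. The only cosmetic difference is that you move $\Lambda^d(\A)$ to the right factor via $\A^{-1}$ whereas the paper rewrites the right factor as $\Lambda^d(\A)$ applied to the un-tilded vector and then combines both into $\Lambda^{2d}(\A)$; these are the same manipulation.
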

\begin{proof}Let $X=\begin{pmatrix}m^+\\I_d\end{pmatrix}, W_1=\begin{pmatrix}I_d\\0\end{pmatrix}\frac{\partial m^+}{\partial t},W_2=\begin{pmatrix}I_d\\\overline{m^-}\end{pmatrix}$
	By \eqref{wedgecoef} we get
	\begin{eqnarray*}&&\text{the coefficient }\\
		&=&\frac{1}{\det\tau}(\Lambda^d(\A)\cdot D\Lambda(X)(W_1))\wedge \Lambda(\begin{pmatrix} 
			I_d \\ \overline{\tm^-}\end{pmatrix})\cdot \det\begin{pmatrix}\tm^+&I_d\\I_d&\overline{\tm^-}
		\end{pmatrix}^{-1}\\
		&=&\frac{1}{\det\tau\cdot\det\overline{\tau_-}}(\Lambda^d(\A)\cdot D\Lambda(X)(W_1))\wedge (\Lambda^d(\A)\cdot \Lambda(W_2))\\
		&&\cdot \det\begin{pmatrix}\tm^+&I_d\\I_d&\overline{\tm^-}
		\end{pmatrix}^{-1}
		\text{ }(\text{use Lemma }\ref{conj m-})\\
		&=&\frac{1}{\det\tau\cdot\det\overline{\tau_-}}\Lambda^{2d}(\A)\cdot (D\Lambda(X)(W_1)\wedge \Lambda(W_2))\cdot \det\begin{pmatrix}\tm^+&I_d\\I_d&\overline{\tm^-}
		\end{pmatrix}^{-1}
	\end{eqnarray*}
	To prove Lemma \ref{coef3}, we only need to prove the following equation:
	\begin{equation}
	\det(\A)\det\begin{pmatrix}m^+&I_d\\I_d&\overline{m^-}\end{pmatrix}=\det\tau\det\overline{\tau_-}\det\begin{pmatrix}\tm^+&I_d\\I_d&\overline{\tm^-}
	\end{pmatrix}
	\end{equation}
	which is just a corollary of \eqref{beftauinv} and Lemma \ref{conj m-}.
\end{proof}
Now come back to the key equation.  By Lemma \ref{coef1},\ref{coef2},\ref{coef3}, taking the coefficient of $\Lambda(\begin{pmatrix}\tm^+\\I_d \end{pmatrix})$ in the key equation and integrating with respect to the measure $\mu$, we have
\begin{equation}
\int_X\text{tr}((I_d-\overline{\tm^-}\tm^+)^{-1}(I_d+\overline{\tm^-}\tm^+))d\mu=\int_X\frac{1}{\det\tau}\frac{\partial \det\tau}{\partial t}d\mu
\end{equation}
Consider the real part, which gives
\begin{equation}\int_X \mathfrak{R}(\text{tr}((I_d-\overline{\tm^-}\tm^+)^{-1}(I_d+\overline{\tm^-}\tm^+)))d\mu=\frac{\partial L^d}{\partial t}
\end{equation}

\subsubsection{A trace inequality and the rest of the proof}
By \eqref{l/t} and Lemma \ref{kotanimono}, we have that 
\begin{eqnarray*}
	&&\liminf_{t\to 0^+}\int_X\frac{1}{2}\sum_{i=1}^d(\frac{1+\sigma_i(\tm^+)^2}{1-\sigma_i(\tm^+)^2}+\frac{1+\sigma_i(\tm^-)^2}{1-\sigma_i(\tm^-)^2})\\
	&&-\mathfrak{R}(\text{tr}((I_d-\overline{\tm^-}\tm^+)^{-1}(I_d+\overline{\tm^-}\tm^+)))d\mu\\
	&\leq&\lim_{t\to 0^+}\frac{L^d(A_{\sigma_0+it})}{t}-\frac{\partial L^d(A_{\sigma_0+it})}{\partial t}\\&=&0
\end{eqnarray*}
Compare with (2). of Theorem \ref{m+=m-}, to finish the proof, we only need to prove the following inequality:
\begin{lemma}\label{finalstep}
	\begin{eqnarray*}&&
		\frac{1}{2}\sum_{i=1}^d(\frac{1+\sigma_i(\tm^+)^2}{1-\sigma_i(\tm^+)^2}+\frac{1+\sigma_i(\tm^-)^2}{1-\sigma_i(\tm^-)^2})\\&-&\mathfrak{R}(\text{tr}((I_d-\overline{\tm^-}\tm^+)^{-1}(I_d+\overline{\tm^-}\tm^+)))\\
		&\geq& \norm{\tm^+-\tm^-}^2_{HS}
	\end{eqnarray*}
\end{lemma}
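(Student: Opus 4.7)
\medskip

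My plan is to treat the inequality as a purely algebraic statement about two matrices $A,B \in SD_d$, writing $A := \tm^+$ and $B := \tm^-$ for brevity (both are complex symmetric of operator norm strictly less than $1$, so $A\bar A = AA^\ast$ and $B\bar B = BB^\ast$ are positive definite with eigenvalues $\sigma_i(A)^2$ and $\sigma_i(B)^2$). Using $\frac{1+x^2}{1-x^2} = \frac{2}{1-x^2}-1$ and $(I-X)^{-1}(I+X)=2(I-X)^{-1}-I$, the left-hand side collapses into
\[
\text{tr}\bigl((I_d-A\bar A)^{-1}\bigr)+\text{tr}\bigl((I_d-B\bar B)^{-1}\bigr)-2\mathfrak R\,\text{tr}\bigl((I_d-\bar B A)^{-1}\bigr),
\]
the three constant terms cancelling exactly.

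Next I would expand each resolvent as a geometric series (absolutely convergent since $\|A\|,\|B\|<1$, hence $\|\bar B A\|<1$), interchange the trace with the sum, and group terms of the same degree. This gives
\[
\text{LHS} \;=\; \sum_{n\geq 0} T_n, \qquad T_n := \text{tr}((A\bar A)^n)+\text{tr}((B\bar B)^n)-2\mathfrak R\,\text{tr}((\bar B A)^n).
\]
A direct check gives $T_0 = 0$, and $T_1 = \|A\|_{HS}^2+\|B\|_{HS}^2-2\mathfrak R\,\text{tr}(\bar B A) = \|A-B\|_{HS}^2$, which matches the right-hand side exactly. So the problem reduces to proving $T_n \geq 0$ for every $n\geq 1$, because then $\text{LHS}\geq T_1=\|A-B\|_{HS}^2$.

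The key estimate is the non-negativity of $T_n$. Since $A,B$ are symmetric, $\text{tr}((A\bar A)^n)=\sum_i \sigma_i(A)^{2n}=\|A\|_{2n}^{2n}$, and likewise for $B$, where $\|\cdot\|_{2n}$ denotes the Schatten $(2n)$-norm. On the other hand, combining the standard Weyl inequality $|\text{tr}(M^n)|\leq \sum_i |\lambda_i(M)|^n \leq \sum_i \sigma_i(M)^n = \|M\|_n^n$ with Hölder for Schatten norms gives
\[
\bigl|\text{tr}((\bar B A)^n)\bigr|\;\leq\; \|\bar B A\|_n^n \;\leq\; \|\bar B\|_{2n}^n\,\|A\|_{2n}^n \;=\;\|B\|_{2n}^n\,\|A\|_{2n}^n,
\]
and AM--GM yields $\|A\|_{2n}^{2n}+\|B\|_{2n}^{2n}\geq 2\|A\|_{2n}^n\|B\|_{2n}^n\geq 2|\text{tr}((\bar B A)^n)|\geq 2\mathfrak R\,\text{tr}((\bar B A)^n)$, hence $T_n\geq 0$.

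The main technical point is isolating the $n=1$ contribution as exactly the target $\|A-B\|_{HS}^2$; that this is the only non-trivial source of the right-hand side, while all higher-order terms are independently non-negative, is what makes the bound tight. The only interchange-of-limits issue (sum versus trace) is harmless by absolute convergence in trace norm, and the Weyl/Schatten-Hölder inequalities used at the end are standard.
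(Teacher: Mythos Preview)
Your proof is correct and follows essentially the same route as the paper: reduce the left-hand side to $\sum_{n\geq 0}T_n$ via geometric series, identify $T_1=\|\tm^+-\tm^-\|_{HS}^2$, and discard the higher-order terms using the trace inequality $\text{tr}((X^\ast X)^n)+\text{tr}((Y^\ast Y)^n)\geq 2\mathfrak R\,\text{tr}((X^\ast Y)^n)$. The paper simply states this last inequality without justification, whereas you supply a clean proof via Weyl's inequality, Schatten--H\"older, and AM--GM, so your write-up is in fact more complete on this point.
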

\begin{proof}
	Notice that for $\norm{x}<1, \frac{1+x}{1-x}=2(1-x)^{-1}-1=-1+2\sum_{k=0}^\infty x^k$, and $\overline{\tm^-}=(\tm^-)^\ast$ We have that:
	\begin{eqnarray*}
		&&\text{left of Lemma } \ref{finalstep}\\
		&=&\sum_{i=1}^d\frac{1}{1-\sigma_i(\tm^+)^2}+\frac{1}{1-\sigma_i(\tm^-)^2}-2\mathfrak{R}\text{tr}((I_d-\overline{\tm^-}\tm^+)^{-1}\\
		&=&\sum_{k=0}^\infty(\sum_{i=1}^d\sigma_i(\tm^+)^{2k}+\sigma_i(\tm^-)^{2k})-2\mathfrak{R}\text{tr}(((\tm^-)^\ast\tm^+)^k)\\
		&=&\sum_{k=0}^\infty\text{tr}(((\tm^+)^\ast\tm^+)^k)+\text{tr}(((\tm^-)^\ast\tm^-)^k)-2\mathfrak{R}\text{tr}(((\tm^-)^\ast\tm^+)^k)
	\end{eqnarray*}
	Then the proof of Lemma \ref{finalstep} is the consequence of the following matrix inequalities: for arbitrary $d\times d$ complex matrices $X,Y, k>1$, 
	\begin{eqnarray*}
		\text{tr}((X^\ast X)^k+(Y^\ast Y)^k)&\geq& 2\mathfrak{R}\text{tr}((X^\ast Y)^k)\\
		\text{tr}(X^\ast X)+\text{tr}(Y^\ast Y)-2\mathfrak{R}\text{tr}(X^\ast Y)&=&\norm{X-Y}^2_{HS}
	\end{eqnarray*} \end{proof}

	\section{Density of positive Lyapunov exponents for continuous symplectic cocycle}\label{density cont}
	\subsection{Herglotz function}
	We recall that  $m$ is called a Herglotz (matrix valued) function if $m$ is an analytic matrix valued function defined on $\CC^+$ and $Im(m(z))$ is a positive definite Hermitian matrix for all $z\in \CC^+$, we list some basic properties we will use (see \cite{gesztesy2000matrix}).
	\begin{lemma}\label{herglotz}
		The function $m(\cdot)$ has a finite normal limit $m(\sigma+i0^+)=\lim_{t\to 0^+}m(\sigma+it)$ for a.e. $\sigma\in \RR$. Moreover if two Herglotz function $m_1,m_2$ have the same limit on a positive measure set on $\RR$, then $m_1=m_2$.
		
	\end{lemma}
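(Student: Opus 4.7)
The plan is to reduce to the classical scalar theory of Nevanlinna class functions, using the structure of matrix valued Herglotz functions. First, I would apply the matrix Herglotz representation theorem (see, e.g., Gesztesy--Tsekanovskii), which writes
\[
m(z) = A + Bz + \int_\RR\!\left(\frac{1}{t-z}-\frac{t}{1+t^2}\right)\!d\Sigma(t),
\]
with $A$ Hermitian, $B$ positive semi-definite Hermitian, and $\Sigma$ a $d\times d$ matrix valued positive Borel measure satisfying $\int\|d\Sigma(t)\|/(1+t^2)<\infty$. This expresses each entry of $m$ as a (complex) scalar Herglotz/Nevanlinna function after an affine adjustment.

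For the existence of boundary values, I would compose each scalar entry with a Cayley transform sending $\CC^+$ to the unit disk, exhibiting each entry as (a constant plus) a function of bounded characteristic, i.e.\ a ratio of bounded holomorphic functions. Fatou's theorem for the Nevanlinna class in the disk then supplies non-tangential boundary values a.e.\ for each entry, and thus for $m(\sigma+i0^+)$. (Finiteness a.e.\ follows because a Nevanlinna class function can have only a zero measure set of $\infty$-boundary values, which handles the possible pole from $Bz$; one can equivalently argue that $\mathrm{Im}\,m(z)\ge 0$ forces $\langle m(z)v,v\rangle$ to be scalar Herglotz for each $v\in\CC^d$, reducing the matter to the standard scalar Fatou statement for Herglotz functions.)

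For uniqueness, suppose $m_1$ and $m_2$ are Herglotz and $m_1(\sigma+i0^+) = m_2(\sigma+i0^+)$ on a set $E\subset\RR$ of positive Lebesgue measure. Writing $h := m_1 - m_2$, each entry $h_{ij}$ is the difference of two Nevanlinna class functions, hence is itself in the Nevanlinna class on $\CC^+$. By the Luzin--Privalov uniqueness theorem (a Nevanlinna class function on $\CC^+$ whose non-tangential boundary values vanish on a positive measure subset of $\RR$ is identically zero), each $h_{ij}\equiv 0$, so $m_1 \equiv m_2$.

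The routine obstacle to watch is verifying that every entry of a matrix Herglotz function really lies in the Nevanlinna class on $\CC^+$. This is where the representation formula is essential: the diagonal entries are scalar Herglotz (hence Nevanlinna) by the polarization $\langle m(z)e_i,e_i\rangle$, and the off-diagonal entries can be recovered from the diagonal of $\langle m(z)(e_i\pm e_j),(e_i\pm e_j)\rangle$ and $\langle m(z)(e_i\pm ie_j),(e_i\pm ie_j)\rangle$, all of which are scalar Herglotz. Once entries lie in the Nevanlinna class, both Fatou (existence a.e.) and Luzin--Privalov (uniqueness from positive measure) apply directly, and no further work is required.
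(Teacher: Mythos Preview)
Your proposal is correct and follows the standard route for this classical result. The paper itself does not supply a proof of this lemma: it states the result as a basic property of matrix-valued Herglotz functions and refers the reader to Gesztesy--Tsekanovskii \cite{gesztesy2000matrix}. Your argument---the integral representation, reduction to scalar Herglotz functions via polarization, Fatou's theorem for existence of a.e.\ boundary values, and Luzin--Privalov for uniqueness from a positive measure set---is exactly the content one finds in that reference, so you have essentially reconstructed the cited proof.
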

	Notice that $\Phi_C^{-1}\cdot m^+(\cdot, x),\Phi_C^{-1}\cdot m^-(-\cdot, x)$ are Herglotz functions.
	\subsection{$M-$function}
	Consider the following definition of $M-$function, which is introduced in \cite{avila2005generic}. 
	\begin{definition}For $A\in L^{\infty}(X,Sp(2d,\RR))$, we denote 
		\begin{equation}
		M(A):=\text{ the Lebesgue measure of }\{\theta\subset [0,2\pi], L(A_\theta)=0\}
		\end{equation}
	\end{definition}
	We hope to prove for generic  $A$, $M(A)=0$. At first, we prove it for a family of symplectic cocycles taking finitely many values. 
	\subsection{Symplectic cocycles taking finitely many values}
	We introduce the following definition of \textit{deterministic}, which is similar to the definition for Sch\"odinger operator in \cite{simon1983kotani} and \cite{kotani1989jacobi}.
	\begin{definition}
		For $A\in L^{\infty}(X,Sp(2d,\RR))$, we say $A$ is deterministic if $A(f^n(x)),n\geq 0$ is a.e., a measurable function of $\{A(f^n(x)), n<0\}$.
	\end{definition}
	
	As \cite{kotani1989jacobi}, we have the following theorem for the $M-$function for symplectic cocycles. 
	\begin{thm}\label{finitekotanitheory}Suppose $A\in L^{\infty}(X,Sp(2d,\RR))$ such that
		
		(1).$A(x), x\in X$ only takes finitely many values.
		
		(2).$A(f^n(x)), n\in \ZZ$, is not periodic for almost every $x\in X$.
		
		(3).If $A(x)\neq A(y), x,y\in X$, then $\A(x)^{-1}(0)\neq \A(y)^{-1}(0)$.
		
		We have  $M(A)=0$.
	\end{thm}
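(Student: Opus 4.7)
The plan is to argue by contradiction. Assume $M(A)>0$, so $\Sigma:=\{\theta\in[0,2\pi]:L(A_\theta)=0\}$ has positive Lebesgue measure. First I apply Theorem \ref{m+=m-} to almost every $\sigma_0\in\Sigma$: part (2) gives $L^2$-convergence $m^+(\sigma_0+it,\cdot)-m^-(\sigma_0-it,\cdot)\to 0$ as $t\to 0^+$, and together with the uniform non-tangential boundedness from part (1), I extract along a subsequence and for $\mu$-a.e.\ $x$ the boundary identity $m^+(\sigma_0+i0^+,x)=m^-(\sigma_0-i0^+,x)$ valid on a positive-measure subset of $\Sigma$.

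For $\mu$-a.e.\ fixed $x$, the map $z\mapsto \Phi_C^{-1}(m^+(z,x))$ is a matrix-valued Herglotz function on $\CC^+$, and a suitable time-reversed transform built from $m^-(\bar z,x)$ is also Herglotz on $\CC^+$. Since these two Herglotz functions have equal non-tangential boundary values on a positive-measure set, Lemma \ref{herglotz} identifies them on all of $\CC^+$. Consequently $m^+(z,x)$ is simultaneously measurable with respect to the strict past $\sigma$-algebra $\mathcal{P}:=\sigma(A(f^{-k}):k\geq 1)$, which comes from its backward-contraction construction, and the future $\sigma$-algebra $\mathcal{F}:=\sigma(A(f^k):k\geq 0)$, which comes from the analogous construction of $m^-$.

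Next I extract determinism of $A$ itself. From the explicit form $\A_{\sigma+it}=\mathrm{diag}(e^{-t+i\sigma}I_d,e^{t-i\sigma}I_d)\A$, a short computation gives $\A_{\sigma+it}(y)\cdot Z=e^{2(i\sigma-t)}\A(y)\cdot Z$ on $SD_d$, so iterating $m^-(z,x)=\A_z(x)^{-1}m^-(z,f(x))$ and letting $t\to\infty$ yields $\lim_{t\to\infty}m^-(\sigma-it,x)=\A(x)^{-1}(0)$. By condition (3) this limit determines $A(x)$, and because the Herglotz identification of Step 2 makes $m^-(z,x)$ coincide with a $\mathcal{P}$-measurable object, $A(x)$ becomes $\mathcal{P}$-measurable; that is, $A$ is deterministic in the sense of the definition preceding the theorem.

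To conclude, I pass to the symbolic factor $\Psi:X\to\{A_1,\dots,A_k\}^{\ZZ}$, $\Psi(x)=(A(f^n(x)))_n$, whose push-forward is shift-invariant on a subshift $\Lambda$ on which determinism descends. Following the Kotani--Simon strategy for finite-valued reflectionless processes, I argue that determinism at every site combined with the injectivity provided by condition (3) and the rigid symplectic structure forces $\Lambda$ to be a finite union of periodic shift orbits, which transported back to $X$ contradicts the aperiodicity hypothesis (2). The main obstacle is Step 4, where I must justify the $t\to\infty$ limit $m^-(\sigma-it,x)\to\A(x)^{-1}(0)$ rigorously and leverage condition (3) to pass from the $\sigma$-algebra containing $\A(x)^{-1}(0)$ to $A$ itself; a secondary difficulty is the last symbolic reduction, which requires adapting the Kotani--Simon finite-valued argument from $SL(2,\RR)$ to the present $Sp(2d,\RR)$ setting, using condition (3) to exclude aperiodic bi-deterministic subshifts (such as Sturmian analogues).
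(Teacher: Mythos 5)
Your proposal follows essentially the same route as the paper's proof: Theorem \ref{m+=m-}(2) together with the Herglotz uniqueness of Lemma \ref{herglotz} shows that the past-determined $m^+(\cdot,x)$ determines $m^-(\cdot,x)$ on all of $\CC^+$, then the limit $m^-(z,x)\to\A(x)^{-1}(0)$ as $\mathfrak{I}(z)\to-\infty$ combined with hypothesis (3) (this is exactly the paper's Lemma \ref{inverseproblem}) recovers $\{A(f^n(x))\}_{n\geq 0}$, so $A$ is deterministic, and periodicity then contradicts (2). The final step, that a finitely-valued deterministic cocycle of this kind must be periodic, is handled in the paper exactly as in your sketch, namely by invoking Kotani's argument from \cite{kotani1989jacobi} rather than reproving it.
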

	
	\begin{proof}We know that for almost every $x\in X, A(f^n(x)), n\geq 0$ can determine the function $m^-(x)$. In fact, for $z$ such that $\mathfrak{I}(z)<0, \A_z(x)^{-1}$ uniformly contracts the Bergman metric on $SD_d$, so like the property of $m-$function in Kotani theory, we have that 
		\begin{equation}\label{mfunctioncomp}
		m^-(z, x)=\lim_{n\to \infty}\A_z(x)^{-1}\cdots\A_z(f^n(x))^{-1}\cdot 0
		\end{equation}
		But we also have the following lemma for the inverse problem:
		\begin{lemma}\label{inverseproblem}
			If a cocycle $A\in L^{\infty}(X,Sp(2d,\RR))$ satisfies (1),
			(3) of Theorem \ref{finitekotanitheory}, then the function $m^-(z, \cdot), z\in \CC^+$ determines $\{A(f^n(\cdot)),n\geq 0\}$ in the sense that if $x,y\in X$ such that $A(f^n(x)), A(f^n(y)), n\geq 0$ are bounded, and $m^-(\cdot,x)=m^-(\cdot, y)$, then $A(f^n(x))=A(f^n(y)), n\geq 0$.
		\end{lemma}
		\begin{proof}
			Let $z$ tends to $\infty$ along the line $\{\mathfrak{R}(z)=0,\mathfrak{I}(z)<0\}$ in \eqref{mfunctioncomp}, we get 
			\begin{equation}\lim_{\mathfrak{R}(z)=0, \mathfrak{I}(z)\to -\infty}m^-(z,x)=\A(x)^{-1}(0)
			\end{equation}
			By (3) of Theorem \ref{finitekotanitheory}, we know that $m^-(\cdot, x)$ can determine $\A(x)$, by 
			\begin{equation}\A_z(x)\cdot m^-(z,x)=m^-(z,f(x))
			\end{equation}
			it implies $m^-(\cdot, x)$ can determine $m^-(\cdot, f(x))$, using the same method again, we can determine $\A(f(x))$. Repeat this process, we determine all $A(f^n(x)),n\geq 0$.
		\end{proof}
		Come back to the proof of Theorem \ref{finitekotanitheory}. Suppose $M(A)>0$, we claim that under the assumptions (1),(3), $A$ must be deterministic. Then by Kotani's argument in \cite{kotani1989jacobi}, $A$ must be periodic, which contradicts the assumption (2).
		
		In fact, the set $\{A(f^n(x)), n<0\}$ determines  $m^+(\cdot, x)$. If $M(A)>0$, by (2). of Theorem \ref{m+=m-},  $m^+(\cdot, x)$ determines $m^-(\cdot, x)$ on a full mesure subset of $\{\theta: L(A_\theta)=0\}$. 
		
		By Lemma \ref{herglotz}, since $\Phi_C^{-1}\cdot m^+(\cdot, x), \Phi_C^{-1}\cdot m^-(-\cdot, x)$ are Herglotz functions,
		$m^+(\cdot,x)$ determines $m^-(-\cdot, x)$ on all of $\CC^+$. By Lemma \ref{inverseproblem}, $\{A(f^n(x)), n\geq 0\}$ is determined by $\{A(f^n(x)), n<0\}$. That means $A$ is deterministic.
	\end{proof}

	\subsection{Continuous symplectic cocycles}\label{semicontinuity arg}
	\begin{thm}\label{generic+}
		Suppose $f$ is not periodic on $supp(\mu)$, then the set of $A$ such that $L(A)>0$ is dense in $C(X, Sp(2d,\RR))$. 
	\end{thm}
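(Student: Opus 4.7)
My goal is, given a continuous $A\in C(X,Sp(2d,\RR))$ with $L(A)=0$ and $\varepsilon>0$, to produce a continuous $A'$ with $\|A'-A\|_{C^0}<\varepsilon$ and $L(A')>0$. The natural perturbation to try first is the rotation family $A_\theta$. If I can arrange $M(A)<2\pi$ in a dense subset of $C(X,Sp(2d,\RR))$ (ideally even $M(A)=0$), then for such $A$ the set $\{\theta:L(A_\theta)>0\}$ has positive measure and by the continuity of $\theta\mapsto A_\theta$ accumulates at $\theta=0$, producing the desired perturbation $A_{\theta_n}\to A$. So the theorem reduces to the \emph{density} of $\{A\in C(X,Sp(2d,\RR)):M(A)=0\}$.

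\textbf{Constructing a good finite-valued approximation.} I invoke Theorem \ref{finitekotanitheory} via the following approximation scheme. Given $A$ and $\varepsilon>0$, cover the compact image $A(X)\subset Sp(2d,\RR)$ by finitely many small balls and use a measurable partition of $X$ subordinate to the preimages of these balls to produce a finite-valued $B\in L^\infty(X,Sp(2d,\RR))$ that is $\varepsilon$-close to $A$ in sup norm at $\mu$-almost every point. Using that $f$ is non-periodic on $\mathrm{supp}(\mu)$, a Rohlin-tower refinement of the partition guarantees condition (2) of Theorem \ref{finitekotanitheory} (the symbolic itinerary is $\mu$-a.e.\ aperiodic). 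Finally, a generic perturbation of the finitely many values $B_i\in Sp(2d,\RR)$ achieves condition (3), since $\{(B_i)_{i}: \widetilde{B_i}^{-1}\!\cdot 0\text{ pairwise distinct}\}$ is open and dense. Theorem \ref{finitekotanitheory} then yields $M(B)=0$.

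\textbf{Transferring from $B$ to continuous cocycles.} This is the main obstacle: $B$ lies in $L^\infty$ but not $C^0$, and the Lyapunov exponent is only upper semi-continuous, so $M(B)=0$ does not directly imply that nearby continuous cocycles have $M=0$ or even $L>0$. I plan to follow the strategy of \cite{avila2005generic}: argue by contradiction, assuming there is a nonempty open $U\subset C(X,Sp(2d,\RR))$ on which $L\equiv 0$. For $A\in U$, because $U$ is open and $\theta\mapsto A_\theta$ is continuous, one gets $L(A_\theta)=0$ for $\theta$ in an open interval around $0$; in particular $M(A)$ has positive Lebesgue measure. Apply Theorem \ref{m+=m-}(2) along a sequence $\sigma_0$ of density points to obtain $m^+(\sigma_0+i0^+,x)=m^-(\sigma_0-i0^+,x)$ in an $L^2$ sense; by Lemma \ref{herglotz} (Herglotz rigidity) this forces the full analytic identification $m^+(\,\cdot\,,x)=m^-(-\,\cdot\,,x)$ on $\CC^+$ for $\mu$-a.e. $x$. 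Combined with the inverse-problem strategy of Lemma \ref{inverseproblem} applied to a finite-valued approximation $B$ of $A$ with $\widetilde{B_i}^{-1}\!\cdot 0$ pairwise distinct, this would force $B$ (and hence $A$ on a large measure set) to be deterministic, which is incompatible with $A$ ranging over an open set in $C^0$ over a non-periodic base.

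\textbf{The hard step.} The delicate point is the last implication: Lemma \ref{inverseproblem} is stated only for finite-valued cocycles, so one must carefully approximate $A\in U$ by finite-valued $B$ while preserving the essential information ($m^+,m^-$, together with $M(B)>0$ inherited from $M(A)>0$ via a suitable semi-continuity). The technical heart of the argument is to show that after choosing $B$ generic as in the second paragraph, the equality $m^+(\,\cdot\,,x)=m^-(-\,\cdot\,,x)$ survives in the approximation, so one may apply Lemma \ref{inverseproblem} to $B$ directly; this is where the aperiodicity of the itinerary (condition (2)) and the genericity of the values (condition (3)) are both used, yielding the contradiction with non-periodicity of $f$ via Kotani's rigidity as in \cite{kotani1989jacobi}.
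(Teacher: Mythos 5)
There are two genuine gaps. First, your reduction to Theorem \ref{finitekotanitheory} silently assumes that a finite-valued approximation $B$ can be made to satisfy condition (2) (aperiodicity of the sequence $A(f^n(x))$ for a.e.\ $x$). But the hypothesis of the theorem only says $f$ is not periodic on $supp(\mu)$; it is entirely possible that $\mu$-almost every point is $f$-periodic (periodic orbits of unbounded period), in which case the itinerary of \emph{any} cocycle along a.e.\ orbit is automatically periodic, no Rohlin tower exists, and the whole Kotani-theoretic route collapses. The paper treats this case separately: it splits according to whether $\mu(P)=1$, and when a.e.\ point is periodic it approximates $A$ by a cocycle that is locally constant near a period-$n$ orbit and proves, via the quantitative analysis of Appendix \ref{app A} (bounding the length of ``elliptic'' intervals by the monotonicity of the fibered rotation function and excluding, generically, repeated unit-modulus eigenvalues), that some $\theta=O(1/n)$ gives $L(A'_\theta)>0$. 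Your proposal contains nothing that covers this half of the theorem.

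Second, your ``transfer'' step from the finite-valued $B$ (with $M(B)=0$) back to continuous cocycles is not carried out, and the mechanism you sketch is the wrong one. Applying Theorem \ref{m+=m-} to a continuous $A$ with $M(A)>0$ and Herglotz rigidity gives determinism of $A$ itself, which is not contradictory for a continuous cocycle; and there is no reason the identity $m^+(\cdot,x)=m^-(-\cdot,x)$ should ``survive'' replacement of $A$ by a finite-valued approximation $B$, since $m^\pm$ are defined through the full forward/backward orbit of the cocycle and are not stable under $L^\infty$-small changes in this sense, so Lemma \ref{inverseproblem} cannot be invoked for $B$ this way. The paper's actual transfer device is quantitative and in the opposite direction: upper semicontinuity of $A\mapsto M(A)$ in the $L^1$ topology on $L^\infty$-bounded sets (Lemma \ref{upp cont}), proved using the Herman--Avila--Bochi-type formula of \cite{sadel2013herman} together with harmonicity of $z\mapsto L^d(A_z)$; one then takes a continuous $A'$ that is $C^0$-close to $A$ and $L^1$-close to a finite-valued $B\in\mathcal{Z}$ with $M(B)=0$, concluding $M(A')<\delta$ (Lemma \ref{msmalldense}) and hence that $\{M(A)=0\}$ is residual. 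Without proving such a semicontinuity statement (or an equivalent substitute), your argument does not close, as you yourself acknowledge in the ``hard step'' paragraph.
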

	\begin{proof}
		At first we consider the following lemma:
		\begin{lemma}\label{generic+erg}
			Suppose $f:(X,\mu)\to (X,\mu)$ ($f\neq\id$) is ergodic, then there is a residual subset of cocycles $A$ in $C(X,Sp(2d,\RR))$ such that $M(A)=0$.
		\end{lemma}	
		\begin{proof}
			We follow the proof in \cite{avila2005generic}. At first we consider the following lemma:
			\begin{lemma}\label{zdense}
				There exists a dense subset $\mathcal{Z}$ of $L^{\infty}(X,Sp(2d,\RR))$ satisfying all conditions of Theorem \ref{finitekotanitheory}.
			\end{lemma}
			
			\begin{proof}By Lemma 2 of \cite{avila2005generic}, the cocycles in $L^{\infty}(X,Sp(2d,\RR))$ satisfying the first two conditions of Theorem \ref{finitekotanitheory} are dense in $L^{\infty}(X,Sp(2d,\RR))$. But for each cocycle $A$ satisfying the first two condition of Theorem \ref{finitekotanitheory}, we can find a new cocycle $A'$ satisfying all conditions in Theorem \ref{finitekotanitheory} and arbitrarily close to $A$.
			\end{proof}
			\begin{lemma}\label{upp cont}For every $r>0$, the map
				\begin{eqnarray*}(L^1(X, Sp(2d,\RR))\cap B_r(L^\infty(X,Sp(2d,\RR))), \norm{\cdot}_1)&\to& \RR,\\
					A &\mapsto& M(A)
				\end{eqnarray*}is upper semi-continuous.
			\end{lemma}
			\begin{proof}
				The proof is the same as the $SL(2,\RR)$ case, since we have the formula in \cite{sadel2013herman} to replace the Herman-Avila-Bochi formula in \cite{avila2002formula} for $SL(2,\RR)$ case. And by Theorem \ref{rot num}, $L^d(A_z)$ is harmonic for $z\in \CC^+$ and subharmonic on $\CC^+\cup\RR$, we can move the proof for $SL(2,\RR)$ case in \cite{avila2005generic} to here.
			\end{proof}

			\begin{lemma}\label{msmalldense}
				For $A\in C(X,Sp(2d,\RR)), \epsilon>0, \delta>0$, there is an $A'\in C(X,Sp(2d,\RR))$ such that $\norm{A-A'}_\infty<\epsilon, M(A)<\delta$.
			\end{lemma}
			\begin{proof}The proof is almost the same as Lemma 3 of \cite{avila2005generic}, we only need to use the set $\mathcal{Z}$ in Lemma \ref{zdense} and Theorem \ref{finitekotanitheory} to replace the set $\mathcal{Z}$ and Kotani result in Lemma 3 of \cite{avila2005generic}.
			\end{proof}
			
			Come back to the proof of Lemma \ref{generic+erg}, for $\delta>0$, we define 
			$$M_\delta=\{A\in C(X,Sp(2d,\RR): M(A)<\delta\}$$
			By Lemma \ref{upp cont}, $M_\delta$ is open, and by Lemma \ref{msmalldense}, $M_\delta$ is dense. It follows that 
			$$\{A\in C(X,Sp(2d,\RR):M(A)=0\}=\cap_{\delta>0}M_{\delta}$$is residual.
		\end{proof}
		Come back to the proof of Theorem \ref{generic+}, Let $P \subset X$ be the set of periodic orbits of $f$. If $\mu(P) < 1$, then using Lemma \ref{generic+erg} we get the proof.
		
		Assume $\mu(P)=1$, we follow the argument of Lemma 3.1 in \cite{avila2011density}. Let $P_k \subset X$ be the set of periodic orbits of period $k \geq 1$. Since $f$ is not periodic on $supp(\mu)$, $\mathcal{P}_n =\cup_{k\leq n}P_k\neq supp(\mu)$ for every $n \geq 1$. Thus there are arbitrarily large $n$ such that $\mu(\mathcal{P}_n\backslash \mathcal{P}_{n-1}) > 0$. 
		
		Choose such a large $n$, and take $x\in supp(\mu)|\mathcal{P}_n\backslash \mathcal{P}_{n-1}$. We can approximate any $A\in C(X, Sp(2d,\RR))$ by some $A'$ which is constant 
		in a compact neighborhood $K$ of $\{f^k(x)\}^{n-1}
		_{k=0}$. The details of the following argument can be found in the Appendix \ref{app A}, here we only give an outline. We will prove that there is a constant $C$ independent of $n, f$  such that for generic $\{A'(f^k(x))\}$, there exist $\theta\in (-\frac{C}{n}, \frac{C}{n})$ with $L(A_\theta'(x))>0$. (Since $A'$ is locally constant near the orbit of $x\in supp(\mu)$, we have $L(A_\theta')>0$. )
		
		Otherwise there is an open interval $I$ contained $0$ and  $|I|>O(\frac{1}{n})$, such that for all $\theta\in I$, all the eigenvalues of $A_\theta'^{(n)}(x))$ are norm $1$, where $|I|$ is the length of $I$.
		But in the Appendix \ref{app A} we will prove for any interval $I'$ such that 
		\begin{equation}\label{elliptic}
		\forall \theta\in I', \text{ all the eigenvalues of }A_\theta'^{(n)}(x)) \text{ are simple and norm 1}
		\end{equation}
		we have $$|I'|\leq O(\frac{1}{n})$$As a result, there are two intervals $I_1, I_2\subset I$ satisfying \eqref{elliptic} and sharing common boundary point $\theta_0$ such that $A_{\theta_0}'^{(n)}(x)$ has repeated eigenvalues with norm $1$. But this can not happen for generic choice of $\{A'(f^k(x))\}$.
		
	\end{proof}

	\section{The proof of Theorem \ref{mainresult}}\label{complexification arg}
	By Theorem \ref{generic+}, as in the $SL(2,\RR)-$case, to prove Theorem \ref{mainresult}, we need a local regularization formula similar to Theorem 7 in \cite{avila2011density}.
	
	At first we need the following lemma:
	\begin{lemma}\label{proper image implie harmon}Suppose $A\in C(X,Sp(2d,\RR))$, $\Omega\subset \CC$ is a domain. Suppose an analytic $Sp(2d,\CC)$-valued map $B$ is defined on $\Omega$ such that for all $z\in \Omega, x\in X$, $\overset{\circ}{B(z)}\cdot \overline{SD_d}\subset SD_d$. Then the Lyapunov exponent $L^d(B(z)A)$ harmonically depends on $z\in \Omega$.
	\end{lemma}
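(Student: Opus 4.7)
\medskip

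\noindent\textbf{Proof proposal.} The plan is to exhibit $L^d(B(z)A)$ as the integral of the real part of a locally holomorphic function, which will automatically give harmonicity. The key input is the contraction hypothesis $\overset{\circ}{B(z)} \cdot \overline{SD_d} \subset SD_d$: combined with Lemma \ref{schw}(2), this ensures that for every $z \in \Omega$ and every $x \in X$, the fiberwise action $\overset{\circ}{B(z)A(x)}$ is a strict contraction of the Bergman metric on $\overline{SD_d}$, with a contraction rate that is locally uniform in $z$ on compact subsets of $\Omega$.

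First, I would build a holomorphic invariant section. By the uniform contraction on compact subsets $K \Subset \Omega$, the iterates
\[
\overset{\circ}{B(z)A}^{n}(f^{-n}(x))\cdot 0 \in SD_d
\]
converge (geometrically, uniformly in $x$ and locally uniformly in $z \in K$) to a measurable section $m^+(z,x) \in SD_d$ satisfying the equivariance relation $\overset{\circ}{B(z)A}(x) \cdot m^+(z,x) = m^+(z,f(x))$ and staying in a compact subset of $SD_d$ that is bounded away from $\partial SD_d$ uniformly in $x$ and locally uniformly in $z$. Since each iterate is a holomorphic function of $z$ into the bounded domain $SD_d$, Montel's theorem gives that $z \mapsto m^+(z,x)$ is holomorphic for each fixed $x$. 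Then, exactly as in the derivation of \eqref{ldtau}, the vector $\Lambda(\begin{pmatrix} m^+(z,x) \\ I_d \end{pmatrix})$ represents the unstable direction of $\Lambda^d(\overset{\circ}{B(z)A})$, so by the Oseledets theorem
\[
L^d(B(z)A) \;=\; \int_X \ln \bigl|\det \tau_{B(z)A(x)}(m^+(z,x))\bigr|\, d\mu(x).
\]

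Now the integrand is harmonic in $z$. Indeed, $\tau_{B(z)A(x)}(m^+(z,x)) \in GL(d,\CC)$ depends holomorphically on $z$ (it is a product of the bottom rows of $\overset{\circ}{B(z)A(x)}$ applied to $\begin{pmatrix} m^+(z,x) \\ I_d \end{pmatrix}$), and under the contraction assumption $\det \tau_{B(z)A(x)}(m^+(z,x))$ never vanishes on $\Omega$ (otherwise the linear fractional action would fail to map $\overline{SD_d}$ into $SD_d$). Hence on any simply connected subdomain a holomorphic branch of $\log\det \tau$ exists, and $\ln|\det\tau_{B(z)A(x)}(m^+(z,x))|$ is its real part, so it is harmonic in $z$. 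Moreover, by the locally uniform estimates on $m^+$ and on $\|B(z)\|, \|A\|_\infty$, this integrand is bounded on $K \times X$. By Fubini applied to the mean value property on any disc $D \Subset \Omega$,
\[
\int_X \ln |\det \tau_{B(z_0)A(x)}(m^+(z_0,x))|\, d\mu(x)
= \frac{1}{2\pi}\int_0^{2\pi}\!\! \int_X \ln|\det\tau_{B(z_0 + re^{i\theta})A(x)}(m^+(z_0+re^{i\theta},x))|\,d\mu(x)\, d\theta,
\]
which is the mean value property for $z \mapsto L^d(B(z)A)$; continuity of this function in $z$ (from dominated convergence) then upgrades this to harmonicity on $\Omega$.

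The main technical point to verify carefully is the geometric convergence of the pullbacks to $m^+(z,x)$ uniformly in $x$ and locally uniformly in $z$, so that Montel applies fiberwise and the integrand is locally uniformly bounded. This is where Lemma \ref{schw}(2) enters: the image $\overset{\circ}{B(z)}\cdot\overline{SD_d}$ is a compact subset of $SD_d$ that varies continuously with $z$, which by compactness of $K$ gives a uniform contraction constant strictly less than $1$ and a uniform distance from $\partial SD_d$. Once that is in place, the rest is a direct application of the factorization \eqref{ldtau} and the fact that $\log|\text{nonvanishing holomorphic}|$ is harmonic.
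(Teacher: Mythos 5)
Your proposal is correct and follows essentially the same route as the paper: the paper proves this lemma by pointing back to the Chapter \ref{rot mon} argument, whose core is exactly your construction — uniform Bergman-metric contraction (via Lemma \ref{schw} and the compression hypothesis) yields a holomorphic invariant section $m^+(z,x)$ by Montel, through which $L^d$ is expressed as in \eqref{ldtau}. The only cosmetic difference is the last step: the paper obtains harmonicity as the imaginary part of the holomorphic limit function $\zeta$ (Montel applied to the partial sums $\int\delta_z\xi_n(x,0,m^+(z,x))\,d\mu$), whereas you check harmonicity of the fiberwise integrand $\ln|\det\tau_{B(z)A(x)}(m^+(z,x))|$ (nonvanishing since $Cm^++D$ is invertible on $\overline{SD_d}$, and bounded above and below locally uniformly) and pass it through the integral by the mean value property — both are valid and rest on the same key facts.
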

	\begin{proof}The proof is basically the same as the discussion in Chapter \ref{rot mon} for holomorphicity of $\zeta-$function. See the remark at page 7 of \cite{sadel2013herman}, and section 3 and 6 of \cite{avila2014complex}. 
	\end{proof}

	As in \cite{avila2011density}, let $\norm{\cdot}_\ast$ denote the sup norm in the space $C(X, \mathfrak{sp}(2d,\RR))$ and $C(X, \mathfrak{sp}(2d, \CC))$. And for $r>0$,  let $\mathcal{B}_\ast(r), \mathcal{B}^\CC_\ast(r)$ be the corresponding $r-$ ball. For $A\in C(X, Sp(2d,\RR))$, $a,b\in C(X, \mathfrak{sp}(2d,\RR))$, we define the following function 
	\begin{equation}
	\Phi_{\epsilon}(A,a,b):=\int_{-1}^1\frac{1-t^2}{|t^2+2it+1|^2}L^d(e^{\epsilon(tb+(1-t^2)a)}A)dt
	\end{equation}
	The following local regularization formula is the main result of this chapter.
	\begin{thm}\label{deformsynp}
		There exists $\eta>0$ such that if $b\in C(X, \mathfrak{sp}(2d,\RR))$ is $\eta-$close to $\begin{pmatrix}0&I_d\\-I_d&0\end{pmatrix}$, then for every $\epsilon>0$, and every $A\in C(X, Sp(2d,\RR))$,
		\begin{equation}\label{poissoncontract}
		\overset{\circ}{e^{\epsilon(zb+(1-z^2)a)}}\cdot \overline{SD_d}\subset {SD_d}
		\end{equation}
		when 
		
		(1).$z\in \{|z|=1\}\cap \mathfrak{I}(z)>0 \text{ or } z=(\sqrt{2}-1)i, a\in \mathcal{B}^\CC_\ast(\eta)$,
		
		(2).$z\in\{|z|<1\}\cap \mathfrak{I}(z)>0,  a\in \mathcal{B}_\ast(\eta)$.

		Moreover 
		\begin{equation}\label{poissoncont}
		a\mapsto \Phi_\epsilon(A,a,b)
		\end{equation}
		is a continuous function of $a\in \mathcal{B}_\ast(\eta)$ and depends continuously (as an analytic function) on $A$.
	\end{thm}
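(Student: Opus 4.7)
The plan is to identify $\Phi_\epsilon(A,a,b)$ with a Poisson-type representation of the Lyapunov exponent, viewed as a harmonic function on the upper half disk $D^+=\{|z|<1,\ \Im z>0\}$ evaluated at the interior point $z_0=i(\sqrt{2}-1)$. A partial-fraction computation of the weight gives
\[
\frac{1-t^2}{|t^2+2it+1|^2}=\frac12\left(\frac{\sqrt{2}-1}{t^2+(\sqrt{2}-1)^2}-\frac{\sqrt{2}+1}{t^2+(\sqrt{2}+1)^2}\right),
\]
which (up to $\pi$) is the harmonic measure on the segment $[-1,1]$ in $D^+$ computed from $z_0$, via the standard method of images: reflection in the real axis combined with inversion in the unit circle, the inversion of $i(\sqrt{2}-1)$ being precisely $i(\sqrt{2}+1)$. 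Hence once $F(z):=L^d(e^{\epsilon(zb+(1-z^2)a)}A)$ is known to be harmonic in $D^+$ and continuous on $\overline{D^+}$, the integral $\Phi_\epsilon(A,a,b)$ captures the segment contribution in the Poisson representation of $F(z_0)$, and the complementary semicircle contribution is continuously determined by the boundary values of $F$ on that arc; this is the mechanism by which continuity is extracted.

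The main technical work is the strict-contraction property \eqref{poissoncontract}. I would compute in the Cayley model, where $\mathfrak{sp}(2d,\RR)\hookrightarrow\mathfrak{u}(d,d)\cap\mathfrak{sp}(2d,\CC)$ sends $J_0=\bigl(\begin{smallmatrix}0 & I_d\\ -I_d & 0\end{smallmatrix}\bigr)$ to $\mathrm{diag}(iI_d,-iI_d)$, so $e^{zJ_0}$ has Cayley image $\mathrm{diag}(e^{iz}I_d,e^{-iz}I_d)$ and acts on $SD_d$ by the scalar $Z\mapsto e^{-2\Im z}e^{2i\Re z}Z$. For $z$ on the open upper semicircle or at $z_0$ the factor $e^{-2\Im z}<1$ strictly compresses $\overline{SD_d}$ into $SD_d$. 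The crucial design of the parametrisation is that $(1-z^2)$ vanishes at the semicircle endpoints $z=\pm 1$ at the same rate as $\Im z$, so $|1-z^2|/\Im z$ is uniformly bounded on the closed upper semicircle; thus for $b$ sufficiently $\norm{\cdot}_\ast$-close to $J_0$ and $\norm{a}_\ast$ sufficiently small, the perturbation $\epsilon(zb+(1-z^2)a)-\epsilon zJ_0$ only shifts the Cayley image by a uniformly small factor (quantified via the bound $\|D^{-1}C\|\le 1$ on blocks of elements of $U(d,d)\cap Sp(2d,\CC)$ from Proposition 2.3 of \cite{sadel2013herman}), so the strict contraction of $e^{\epsilon zJ_0}$ persists. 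Case (2) is analogous: on the interior of $D^+$, $\Im(zb)\approx \Im(z)J_0$ again supplies the contracting direction while the real symplectic summand $(1-z^2)a$ merely twists along a unitary subgroup. Lemma \ref{proper image implie harmon} then delivers the harmonicity of $F$ on $D^+$ (together with the analytic extension across the semicircle needed in case (1)).

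Continuity of $\Phi_\epsilon$ in $a\in\mathcal{B}_\ast(\eta)$ follows by dominated convergence once $F$ is known to extend continuously to $\overline{D^+}$: upper semicontinuity of $L^d$ together with the uniform strict contraction at the boundary coming from (1) supply this extension, and the Poisson representation identifies $\Phi_\epsilon$ as a continuous functional of those boundary values. The joint continuous (analytic) dependence on $A$ is inherited from the analytic dependence on $A$ of the whole contracting family, again through Lemma \ref{proper image implie harmon}. The main obstacle I anticipate is the uniform-in-$\epsilon$ character of \eqref{poissoncontract} near the corners $z=\pm 1$, where the contracting generator $zb$ and the perturbation $(1-z^2)a$ degenerate simultaneously; exploiting the balanced vanishing $|1-z^2|=O(\Im z)$ at those corners is precisely what permits choosing $\eta$ independently of $\epsilon$, rather than obtaining the useless $\eta=O(\epsilon)$ that a naive perturbation estimate would give.
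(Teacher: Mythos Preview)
Your overall architecture matches the paper's: work in the Cayley model where $J_0$ becomes $\mathrm{diag}(iI_d,-iI_d)$, establish the strict contraction \eqref{poissoncontract}, and then deduce the continuity of $\Phi_\epsilon$ from Lemma~\ref{proper image implie harmon} via the Poisson/harmonic-measure interpretation on the upper half-disk (your partial-fraction identity and image-charge reading are correct and exactly the mechanism used in \cite{avila2011density}).

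There is, however, a genuine gap in how you obtain \eqref{poissoncontract} for \emph{all} $\epsilon>0$. You attribute the $\epsilon$-independence of $\eta$ to the balanced vanishing $|1-z^2|=O(\Im z)$ near $z=\pm1$. That balance is indeed the right observation, and the paper uses it too (it is what makes $\mathfrak{I}\bigl(z\overset{\circ}{b}+(1-z^2)\overset{\circ}{a}\bigr)=\Im(z)\bigl(\mathrm{diag}(i,-i)+O(\eta)\bigr)$ hold uniformly in both cases), but it only buys you uniformity in $z$, not in $\epsilon$. A direct perturbation of $e^{\epsilon W}$ around $e^{\epsilon zJ_0}$ produces errors that grow with $\epsilon$, and the block bound $\|D^{-1}C\|\le 1$ you invoke does not control this. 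The paper's device is a semigroup/iteration step: one first proves \eqref{poissoncontract} for all sufficiently small $\epsilon$ using a first-order (Zassenhaus/BCH) expansion $e^{\epsilon(R+iI)}=e^{O(\epsilon^2\Im z)}e^{\epsilon iI}e^{\epsilon R}$, where $e^{\epsilon R}$ preserves $\partial SD_d$ and $e^{\epsilon iI}$ contracts by $e^{-2\epsilon\Im z}+O(\epsilon\eta\Im z)$; this yields $\|Z_\epsilon\|\le e^{-\epsilon\Im z}$ for $\epsilon<\epsilon_0$ with $\epsilon_0$ independent of $z$. Then for arbitrary $\epsilon$ one writes $e^{\epsilon(\cdots)}=\bigl(e^{(\epsilon/N)(\cdots)}\bigr)^N$ and composes. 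Without this step your argument does not close.

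A smaller point on continuity: your first paragraph has it right --- $\Phi_\epsilon(A,a,b)$ equals $\pi F(z_0)$ minus the semicircle integral, and both pieces are continuous in $(a,A)$ because strict contraction holds at $z_0$ and along the open semicircle, whence $L^d$ is harmonic there by Lemma~\ref{proper image implie harmon}. But your later sentence that ``$F$ extends continuously to $\overline{D^+}$'' is false as stated: on the real segment $(-1,1)$ there is no contraction, and $L^d$ is only upper semicontinuous there. That is precisely why one works with the regularized quantity $\Phi_\epsilon$ rather than with $F$ on the segment directly; dominated convergence on $[-1,1]$ is not available.
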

	\begin{proof}In fact we only need to prove \eqref{poissoncontract}, \eqref{poissoncont} is the consequence of \eqref{poissoncontract} and Lemma \ref{proper image implie harmon}, see Theorem 7 of \cite{avila2011density}.
		
		To prove \eqref{poissoncontract}, we claim there exists a positive number $\eta>0$ such that for every point $Z\in\partial SD_d$, $\{Z^T=Z, \norm{Z}=1\}$, for $\epsilon>0$ small, the path $Z_\epsilon:=\overset{\circ}{e^{\epsilon(zb+(1-z^2)a)}}\cdot Z$ is contained in $SD_d$ for $z$ and $a$ in either case (1) or (2). This implies there exists $\epsilon_0>0$ small, for all $\epsilon<\epsilon_0$, $\overset{\circ}{e^{\epsilon(zb+(1-z^2)a)}}\cdot \overline{SD_d}\subset SD_d$. By iteration, $\overset{\circ}{e^{\epsilon(zb+(1-z^2)a)}}$ takes $ \overline{SD_d}$ into $SD_d$ for every $\epsilon>0$. 
		
		At first, by the "left-oriented" Zassenhaus formula (see \cite{casas2012efficient}, for example), we have the following equation for exponential map of matrix when $\epsilon$ is small, $\norm{X},\norm{Y}\leq 2$.
		\begin{equation}\label{zass}
		e^{\epsilon(X+Y)}=e^{O(\epsilon^2\norm{X}\cdot \norm{Y})}e^{\epsilon X}e^{\epsilon Y}
		\end{equation}
		which means there exist a vector $W$ in the Lie algebra with norm less than $O(\epsilon^2\norm{X}\cdot \norm{Y})$, such that $e^{\epsilon(X+Y)}=e^We^{\epsilon X}e^{\epsilon Y}$.
		
		In addition, we need some notations for a real Lie algebra $\mathfrak{g}$ and its complexification $\mathfrak{g}^\CC=\mathfrak{g}\oplus i\mathfrak{g}$.
		For an element $c\in \mathfrak{g}^\CC, a,b\in \mathfrak{g} $ such that $c=a+ib$, we denote \begin{equation}\mathfrak{R}(c)=a, \mathfrak{I}(c)=b
		\end{equation}
		
		From now to the end of this chapter, we always consider $\mathfrak{g}$ is the Lie algebra of $U(d,d)\cap Sp(2d,\CC)$ or $\RR$. Then $\mathfrak{g}^\CC$ is $\mathfrak{sp}(2d,\CC)$ or $\CC$.
		
		Now we denote  $R(a,b,z)=\mathfrak{R}(z\overset{\circ}{b}+(1-z^2)\overset{\circ}{a})=\mathfrak{R}(z)\overset{\circ}{b}+\mathfrak{R}((1-z^2)\overset{\circ}{a})$ and $I(a,b,z)=\mathfrak{I}(z\overset{\circ}{b}+(1-z^2)\overset{\circ}{a})=\mathfrak{I}(z)\overset{\circ}{b}+\mathfrak{I}((1-z^2)\overset{\circ}{a})$.
		
		Let $\eta$ be small, then for $z,a$ in either case (1) or (2) we have the following equations:
		\begin{eqnarray}
		Z_\epsilon=\overset{\circ}{e^{\epsilon(zb+(1-z^2)a)}}\cdot Z&=&e^{\epsilon (R+iI)}\cdot Z\\
		e^{\epsilon R}\cdot Z&\in& \partial{SD_d}\label{realpresbdry}\\
		I(a,b,z)&=&\mathfrak{I}(z)(\begin{pmatrix}i&\\&-i\end{pmatrix}+O(\eta))\label{keyof2011}\\
		\norm{R(a,b,z)}_\ast&\leq& 2\label{r<2}\\
		\norm{I(a,b,z)}_\ast&\leq&2\mathfrak{I}(z)\label{i<2}
		\end{eqnarray}
		Here \eqref{keyof2011} comes from the fact that  $\norm{\mathfrak{I}((1-z^2)\overset{\circ}{a})}_\ast\leq O(\eta\mathfrak{I}(z))$ holds for either case (1) or (2).
		
		Denote $Z'=e^{\epsilon R}Z$. Then by \eqref{realpresbdry} we know $\norm{Z'}=1$, and we have:
		\begin{eqnarray*}
			Z_\epsilon&=&e^{\epsilon (R+iI)}\cdot Z\\
			&=&e^{O(\epsilon^2\norm{R}_\ast\norm{I}_\ast)}e^{\epsilon iI}e^{\epsilon R}\cdot Z\text{ by }\eqref{zass}\eqref{r<2}\eqref{i<2}\\
			&=&e^{O(\epsilon^2\mathfrak{I}(z))}e^{\epsilon iI}\cdot Z' \text{ by }\eqref{r<2}\eqref{i<2}\\
			&=&e^{O(\epsilon^2\mathfrak{I}(z))}e^{\epsilon \mathfrak{I}(z)(\begin{pmatrix}-1&\\&1\end{pmatrix}+O(\eta))}\cdot Z' \text{ by }\eqref{keyof2011}\\
			&=&e^{O(\epsilon^2\mathfrak{I}(z))}e^{O(\epsilon^2\eta \mathfrak{I}(z)^2)}e^{O(\epsilon\eta\mathfrak{I}(z))}\cdot (e^{-2\epsilon \mathfrak{I}(z)}Z') \text{ by }\eqref{zass}\\
			&=&e^{O(\epsilon(\epsilon+\eta)\mathfrak{I}(z))}\cdot(e^{-2\epsilon \mathfrak{I}(z)}Z') \text{ since }\eta \text{ is small. }\\
		\end{eqnarray*}
		If $\epsilon$ is small enough, since the action on the equation above is a M\"obius transformation, then by computation we have 
		\begin{eqnarray*}
			\norm{Z_\epsilon}&=& \norm{e^{O(\epsilon(\epsilon+\eta)\mathfrak{I}(z))}\cdot (e^{-2\epsilon \mathfrak{I}(z)}Z')}\\
			&\leq &e^{-\epsilon \mathfrak{I}(z)}
		\end{eqnarray*}for $\epsilon$ small, 
		which implies $Z_\epsilon\in SD_d$. Then we get the proof of Theorem \ref{deformsynp}.
	\end{proof}

	To finish the proof of Theorem \ref{mainresult} we need the following short lemma.
	\begin{lemma}\label{shortlemma}
		Let $A\in C(X,Sp(2d,\RR))$, $a,b\in C(X,\frak{sp}(2d,\RR))$ and $\epsilon>0$, if $L^d(e^{\epsilon a})>0$, then $\Phi_\epsilon(A,a,b)>0$.
	\end{lemma}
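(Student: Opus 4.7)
The plan is to argue by contradiction, exploiting the subharmonic (in fact harmonic on the upper half disc) behavior of the Lyapunov exponent in a complex deformation parameter. First, for each real $t\in[-1,1]$ the cocycle $e^{\epsilon(tb+(1-t^2)a)}A$ is real symplectic, and symmetry of its Lyapunov spectrum about $0$ gives $u(t):=L^d(e^{\epsilon(tb+(1-t^2)a)}A)\ge 0$. Since the kernel $K(t):=\frac{1-t^2}{|t^2+2it+1|^2}$ is strictly positive on $(-1,1)$, the integrand of $\Phi_\epsilon(A,a,b)$ is non-negative and $\Phi_\epsilon(A,a,b)\ge 0$. The task thus reduces to ruling out equality.

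Suppose $\Phi_\epsilon(A,a,b)=0$. Then $u(t)=0$ for Lebesgue-a.e.\ $t\in(-1,1)$, whereas by hypothesis $u(0)=L^d(e^{\epsilon a}A)>0$. Complexify via $\tilde u(z):=L^d(e^{\epsilon(zb+(1-z^2)a)}A)$: by (the $\Lambda^d$ version of) the subharmonicity statement for Lyapunov exponents, $\tilde u$ is subharmonic on $\CC$, bounded on bounded sets, and everywhere non-negative (complex symplectic spectra are still symmetric about $0$). By Lemma \ref{proper image implie harmon} together with Theorem \ref{deformsynp}(2), $\tilde u$ is harmonic on $\DD^+:=\{|z|<1,\mathfrak{I}(z)>0\}$. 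Since $A$, $a$, $b$ are real-valued, $\tilde u(\bar z)=\tilde u(z)$, so the same properties hold on the lower half disc $\DD^-$.

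The crux is to show that $\tilde u(z)\to 0$ as $z\to 0$ within $\overline{\DD^+}\setminus\{0\}$, and, by symmetry, within $\overline{\DD^-}\setminus\{0\}$. Upper semi-continuity of $\tilde u$ across the real axis, combined with the identity $\tilde u(t)=u(t)=0$ a.e.\ on $(-1,1)$, forces the nontangential boundary values of the bounded harmonic function $\tilde u|_{\DD^+}$ to vanish almost everywhere on $(-1,1)$, while non-negativity of $\tilde u|_{\DD^+}$ keeps them $\ge 0$. Hence the Poisson representation of $\tilde u|_{\DD^+}$ reduces to an integral over the upper semicircle alone, and is bounded above by the supremum of the (bounded) semicircle data times the harmonic measure of the upper semicircle as seen from $z$. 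Since $0$ is a Dirichlet-regular interior point of the arc $(-1,1)\subset\partial\DD^+$, this harmonic measure extends continuously to $\overline{\DD^+}$ with value $0$ at $z=0$, giving $\tilde u|_{\DD^+}(z)\to 0$ as $z\to 0$.

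The contradiction now follows from the sub-mean-value inequality $\tilde u(0)\le \frac{1}{\pi r^2}\int_{B_r(0)}\tilde u\,dA$: the real line has planar measure zero, so the right-hand side is at most $\sup\{\tilde u(z):z\in B_r(0)\setminus\RR\}$, and this supremum tends to $0$ as $r\to 0^+$ by the previous step. Therefore $\tilde u(0)\le 0$, contradicting $\tilde u(0)>0$. The main obstacle is the identification of the Poisson boundary values on $(-1,1)$: one cannot conclude $\Phi_\epsilon>0$ from $u(0)>0$ using only upper semi-continuity of $u$ on $[-1,1]$ (easy spike counterexamples exist). It is essential that $\tilde u$ be subharmonic on all of $\CC$, so that the real axis sits between two half discs on which $\tilde u$ is harmonic and bounded, pinning down the boundary behavior sharply enough for the sub-mean-value estimate to close the argument.
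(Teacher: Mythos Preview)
Your overall strategy---contradiction via the sub-mean-value inequality once you know $\tilde u(z)\to 0$ as $z\to 0$ off the real axis---is sound, and the Poisson/harmonic-measure steps are carried out correctly. There is, however, a genuine gap: to obtain harmonicity of $\tilde u$ on the upper half-disc you appeal to Lemma~\ref{proper image implie harmon} via Theorem~\ref{deformsynp}(2), but the latter requires $b$ to be $\eta$-close to $\left(\begin{smallmatrix}0&I_d\\-I_d&0\end{smallmatrix}\right)$ and $a\in\mathcal{B}_*(\eta)$. The present lemma places no such restriction on $a$ and $b$, so for general $a,b$ there is no reason for $\tilde u$ to be harmonic on either half-disc, and the Poisson-representation step collapses. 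Your argument is adequate for the only place the lemma is actually invoked (right after Theorem~\ref{deformsynp}, where those hypotheses are in force), but it does not prove the lemma as stated.

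The paper's proof sidesteps this entirely: it uses only that the complex extension $z\mapsto L^d(e^{\epsilon(zb+(1-z^2)a)}A)$ is subharmonic on all of $\CC$ and non-negative, and then invokes (without spelling it out) the standard potential-theoretic fact that a non-negative subharmonic function vanishing Lebesgue-a.e.\ on an interval must vanish identically on that interval. One justification: a subset of $(-1,1)$ of full linear measure has, in every dyadic annulus about any of its points, the same logarithmic capacity as the full interval (the equilibrium measure of an interval being absolutely continuous), hence is non-thin there by Wiener's criterion; and every subharmonic $u$ satisfies $u(\zeta)=\limsup_{z\to\zeta,\,z\in E}u(z)$ whenever $E$ is non-thin at $\zeta$. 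This gives $\gamma(0)=0$ directly, with no harmonicity hypothesis and no restriction on $a,b$.
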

	
	\begin{proof}
		As in the proof of Lemma \ref{le subharmonic} and Lemma 2.3 in \cite{avila2011density}, the map $$\gamma: t\mapsto L^d(e^{\epsilon(tb+(1-t^2)a)}A)$$ is a subharmonic function. 
		
		By subharmonicity, if $\gamma(t)=0$ for almost every $t\in (-1,1)$, then $\gamma(t)=0$ for all $t\in (-1,1)$. As a result, if $\gamma(0)>0$, $\Phi_\epsilon(A,a,b)$ must be positive. 
	\end{proof}
	
	Now we can prove Theorem \ref{mainresult}. We follow the argument in \cite{avila2011density}. For all $\delta>0$, $A\in \mathfrak{B}\subset C(X,Sp(2d,\RR))$, where $\mathfrak{B}$ is ample, we need to prove there is a $v\in \mathfrak{b}\subset C(X,\mathfrak{sp}(2d,\RR))$ such that $\norm{v}_{\mathfrak{b}} <\delta, L^d(e^vA)>0$.
	
	Choose a  positive number $\eta$ satisfying the conditions in Theorem \ref{deformsynp}, and take $b\in \mathfrak{b}$ such that $b$ is $\eta-$close to $\begin{pmatrix}
	0&I_d\\
	-I_d&0
	\end{pmatrix}$, let $\epsilon>0$ such that $\epsilon \norm{b}_{\mathfrak{b}}<\frac{\delta}{2}$. By Theorem \ref{generic+}, there is an element $a\in B_\ast(\eta)\subset C(X,\mathfrak{sp}(2d,\RR))$ such that $L^d(e^{\epsilon a}A)>0$. By Lemma \ref{shortlemma}, we have
	$\Phi_\epsilon(A,a,b)>0$
	
	Since $\mathfrak{b}$ is dense in $C(X,\mathfrak{sp}(2d,\RR))$, and by Theorem \ref{deformsynp} we know the map in \eqref{poissoncont} is continuous,  there is an element $a'\in B_\ast(\eta)\cap \mathfrak{b}$ such that $\Phi_\epsilon(A,a',b)>0$.
	
	By Theorem \ref{deformsynp}, the map $\gamma': s\mapsto \Phi_\epsilon(A,sa',b)$ is an analytic function of $s\in [-1,1]$. Since $\gamma'(1)>0$, we can choose $$0<s< \min\{1, \frac{\delta}{2\epsilon \norm{a'}_{\mathfrak{b}}}\}$$ such that $\gamma'(s)>0$. Then there exists $t\in (-1,1)$ such that $$L^d(e^{\epsilon(tb+(1-t^2)sa')}A)>0$$
	Let $v=\epsilon(tb+(1-t^2)sa')$, then $v\in \mathfrak{b}$,  $\norm{v}_{\mathfrak{b}}<\epsilon(\norm{b}_{\mathfrak{b}}+s\norm{a'}_{\mathfrak{b}})<\delta$
	and $L^d(e^vA)>0$.

	\section{proof of the rest of results}
	
	\subsection{Proof of Corollary \ref{main2}: $SHSp(2d)$ and $SU(d,d)$ cocycles}
	The proof of Corollary \ref{main2} for $SHSp(2d)$ and $SU(d,d)$ cocycles is similar to Theorem \ref{mainresult}. Consider the following correspondences between the concepts used in symplectic cocycle and special Hermitian symplectic cocycle. Almost all the following concept also works for any biholomorphic transformation group for (non compact) Hermitian symmetric space.
	\begin{eqnarray*}
		Sp(2d,\RR)&\leftrightarrow& SHSp(2d)\\
		U(d,d)\cap Sp(2d,\CC)&\leftrightarrow& SU(d,d)\\
		SH_d&\leftrightarrow&\{Z=X+iY,\\
		&& X,Y\in Her(d), Y>0\}\\
		SD_d&\leftrightarrow&\{Z, I_d-Z^\ast Z>0\} \\
		\text{Caylay element}\frac{1}{\sqrt{2}} \begin{pmatrix}I_d&-i\cdot I_d\\I_d&i\cdot I_d \end{pmatrix}&\leftrightarrow&\frac{1}{\sqrt{2}} \begin{pmatrix}I_d&-i\cdot I_d\\I_d&i\cdot I_d \end{pmatrix}\\
		A\mapsto \A:=CAC^{-1}&\leftrightarrow&A\mapsto \A:=CAC^{-1}, \\
		Sp(2d,\RR)\cong U(d,d)\cap Sp(2d,\CC)&& SHSp(2d)\cong SU(d,d)\\
		\text{M\"obius transformation}&\leftrightarrow&\text{M\"obius transformation}\\
		\Phi_C: SH_d\to SD_d&\leftrightarrow& \Phi_C: \{Z=X+iY, X,Y\in Her(d), \\
		&& Y>0\} \to \{Z, I_d-Z^\ast Z>0\}\\
		\partial SD_d &\leftrightarrow& \partial \{Z, I_d-Z^\ast Z>0\}=\{\norm{Z}=1\}\\
		\partial_k SD_d &\leftrightarrow&\{\norm{Z}=1, \text{rank}(1-Z^\ast Z)=d-k\}\\
		\partial_dSD_d=U_{sym}(\CC^d)&\leftrightarrow& U(d)\\
		\text{fin}\partial_dSH_d=Sym_d\RR&\leftrightarrow&her(d)\\
		\Phi_C\text{ gives chart }: Sym_d\RR &\leftrightarrow&\Phi_C:her(d)\\
		\to \{\det(Z-1)\neq 0\}\cap \partial_dSD_d&~~~~~&\to \{Z,\norm{Z}=1, \det(Z-1)\neq 0\}\\
	\end{eqnarray*}
	\begin{eqnarray*}
	\text{atlas }\Phi_{C_{g_k}}&\leftrightarrow&\Phi_{C_{g_k}} \text{ be defined similarly.}\\
		\text{Bergman metric, volume form}&\leftrightarrow&\text{Bergman metric, volume form}\\&& \text{on } \{\norm{Z}< 1\}\\
		V(Z)&\leftrightarrow&V(Z)=\Pi_{1\leq i\leq d}(1-\sigma_i(Z)^2)^{-2d}\\
	\end{eqnarray*}
	Consider $A_{\sigma+it},\A_{\sigma+it}$ defined as in the symplectic case, we hope to prove the same result as Theorem \ref{rot num} firstly. Obviously we can define $\tau_A$, $\hat{\tau}$ as in the  symplectic case, and we also have similar results to Lemma \ref{dpi}, \ref{indep z}. 
	
	By \cite{clerc1998compressions}, Lemma \ref{schw} holds for all (non compact) bounded Hermitian symmetric space, then we can define $m^+, m^-$ as in the symplectic case. So Lemma \ref{imindep z} also works for special pseudo unitary group. So we can define function $\zeta$ which satisfies condition 1 and 3 in Theorem \ref{rot num}. 
	
	To prove statement in Theorem \ref{rot num} for $SHSp(2d)$, we need to prove the fibered rotation function, the real part of $\zeta$ is non-increasing on $\RR$. 
	
	Define the cone field $\{h>0\}$ on $her(d)$, and then use the tangent map of $\Phi_{C_{g_k}}$ to map it to $TU(d)$. As in the  symplectic case, it gives well-defined cone fields $\mathcal{C},\hat{\mathcal{C}}$ on $U(d)$ and $\widehat{U(d)}$. Using $\hat{\mathcal{C}}$ we give a partial order on $\widehat{U(d)}$ as the $\widehat{U_{sym}(\CC^d)}$ case. 
	
	By Cartan decomposition of $SHSp(2d)$ and identity $$\det(1+XY)=\det(1+YX)$$we can get the same equation as in Lemma \ref{geometric meaning of tau hat}. Then we can prove the non-increasing property of the fibered rotation function as Theorem \ref{rot num}.
	
	To prove the same result as Theorem \ref{m+=m-}. We can use the following equation to replace Lemma \ref{conj m-}.

	Suppose $t>0$, $m^-=m^-(\sigma-it, x), \tm^-=m^-(\sigma-it,f(x))$ satisfying $\tm^-=\A_{\sigma-it}(x)\cdot m^-$. There exists $\tau^-\in GL(d,\CC)$ such that
	\begin{equation}\label{mstar inv}
	\A\begin{pmatrix} I_d\\m^\ast \end{pmatrix}=\begin{pmatrix} I_d\\ \tm^\ast \end{pmatrix}\tau^-
	\end{equation}
	
	As in the symplectic case, we can define the $q-$ function, we have the following properties for $q-$ function for $SHSp(2d)$ and $SU(d,d)$ to replace Lemma \ref{jac leb} and \eqref{ldq}.
	\begin{eqnarray}
	L^d(A_{\sigma+it})&=&\frac{1}{4d}\int_X-\ln q_{\sigma+it}(x)d\mu(x)\\
	|\frac{dm^+(\sigma+it, f(x))}{dm^+(\sigma+it, x)}|&=&|\det(\tau_{A_{\sigma+it}}(x))|^{-4d}
	\end{eqnarray}
	
	Now we can prove Theorem \ref{m+=m-} as the following. By non-increasing property of fibered rotation function, Lemma \ref{kotanimono} holds. And by the same proof as Lemma \ref{q compu}, we have
	\begin{equation}
	q^{-1}=e^{-4td^2}\cdot\Pi_ {i=1}^d(\frac{e^{4t}(1-\sigma_i(\tilde{m}^+)^2)}{1-e^{4t}\sigma_i(\tilde{m}^+)^2})^{2d}
	\end{equation}
	then we have the same inequality for $\frac{L^d}{t}$ as \eqref{l/t}.
	For the estimate of $\frac{\partial L^d}{\partial t}$ for Hermitian symplectic case, we use $\begin{pmatrix}
	I_d\\{m^-}^\ast
	\end{pmatrix}$ to replace $\begin{pmatrix}
	I_d\\ \overline{m^-}
	\end{pmatrix}$, $\tau^-$ defined in \eqref{mstar inv} to substitute $\overline{\tau_-}$ defined in Lemma \ref{conj m-}, use  \eqref{mstar inv} to replace Lemma \ref{conj m-}, we can get the following equation 
	\begin{equation}
	\int_X \mathfrak{R}(\text{tr}((I_d-\tm^{-^\ast} \tm^+)^{-1}(I_d+\tm^{-^\ast} \tm^+)))d\mu=\frac{\partial L^d}{\partial t}
	\end{equation}then the rest of the proof of Corollary \ref{main2} for $SHSp(2d), SU(d,d)$ cocycles is the same as the proof of Theorem \ref{mainresult}.\footnote{see appendix \ref{app A} for a discussion corresponds to the final part of the proof of Theorem \ref{generic+} for special Hermitian symplectic cocycles. }
	
	\subsection{The proof of Corollary \ref{main2}: $HSp(2d)$ and $U(d,d)$ cocycles}
	To prove corollary \ref{main2} for $HSp(2d)$ and $U(d,d)$ cocycles, we consider the following lemma which similar to Lemma \ref{zdense}:
	
	\begin{lemma}
		There exists a dense subset $\mathcal{Z}$ of $L^\infty(X, HSp(2d))$ satisfying all conditions of Theorem \ref{finitekotanitheory}. As a result, for all $A\in \mathcal{Z}, M(A)=0$.
	\end{lemma}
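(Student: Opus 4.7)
The plan is to follow the blueprint of Lemma \ref{zdense} in two stages. First, I would invoke Lemma 2 of \cite{avila2005generic}, whose proof is a Rokhlin-tower step-function approximation that depends only on the ergodic structure of $(f,X,\mu)$ and is insensitive to the target Lie group. Applied to $L^\infty(X, HSp(2d))$ it yields density of cocycles $A$ taking only finitely many values for which $\{A(f^n(x))\}_{n\in\ZZ}$ is a.e.\ non-periodic; that is, conditions (1) and (2) of Theorem \ref{finitekotanitheory} hold on a dense subset of $L^\infty(X, HSp(2d))$.

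Next, given such an $A$ with values $\{A_1,\ldots,A_N\}\subset HSp(2d)$, I would perturb the individual values to also enforce condition (3). Consider the Cayley conjugates $\A_i=CA_iC^{-1}$ and the points $p_i:=\A_i^{-1}\cdot 0$ in the associated Hermitian symmetric disc. The map $A\mapsto \A^{-1}\cdot 0$ from $HSp(2d)$ to the disc is real-analytic and its fibres form a proper analytic subset (essentially a coset of the isotropy of $0$), so the condition $\{p_i\neq p_j \text{ whenever } A_i\neq A_j\}$ is open and dense in $HSp(2d)^N$. Replacing each $A_i$ by an arbitrarily small $HSp(2d)$-perturbation $A_i'$ yields a new finitely-valued cocycle $A'$ arbitrarily close to $A$ in $L^\infty$ norm and satisfying all three conditions of Theorem \ref{finitekotanitheory}; this gives the dense subset $\mathcal Z$.

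For the \emph{as a result} clause, I would verify that the conclusion of Theorem \ref{finitekotanitheory} transfers to the $HSp(2d)$ setting, i.e.\ $M(A)=0$ for $A\in\mathcal Z$. All four ingredients of the original proof extend: the formula $m^-(z,x)=\lim_n \A_z(x)^{-1}\cdots\A_z(f^n(x))^{-1}\cdot 0$ holds because $\A_z^{-1}$ uniformly contracts the Bergman metric (Lemma \ref{schw}, valid on every non-compact Hermitian symmetric space); the inverse-problem lemma follows from condition (3) via $\lim_{\mathfrak{I}(z)\to -\infty} m^-(z,x)=\A(x)^{-1}\cdot 0$; the implication that $m^+(\cdot,x)$ determines $m^-(\cdot,x)$ on a positive-measure subset of $\{\theta:L(A_\theta)=0\}$ uses the $HSp(2d)$-version of Theorem \ref{m+=m-} already derived in the previous subsection together with Lemma \ref{herglotz} (Herglotz matrix-valued functions are determined by boundary values on sets of positive measure); and Kotani's deterministic-implies-periodic argument in \cite{kotani1989jacobi} is purely symbolic-dynamical and transfers verbatim, yielding a contradiction with (2) if $M(A)>0$.

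The main obstacle, and the only genuinely non-formal step, is the injectivity perturbation in the second paragraph: one must verify that the condition $\A^{-1}(0)\neq\A'^{-1}(0)$ is generic within $HSp(2d)\times HSp(2d)$ and achievable by arbitrarily small perturbations. Since $HSp(2d)$ acts transitively on its associated Hermitian symmetric disc (the analogue of Lemma \ref{caylay}(2)) and the stabiliser of $0$ is a proper closed subgroup, the set of pairs with $\A^{-1}(0)=\A'^{-1}(0)$ has empty interior, so the required density of perturbations follows. The remaining bookkeeping—reading off which statements from the symplectic and $SHSp(2d)$ discussions transfer verbatim to the Hermitian-symplectic case—is straightforward.
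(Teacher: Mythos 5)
There is a genuine gap in your third paragraph. You justify the \lq\lq as a result\rq\rq\ clause by invoking \lq\lq the $HSp(2d)$-version of Theorem \ref{m+=m-} already derived in the previous subsection\rq\rq, but no such version exists: the previous subsection only establishes Theorem \ref{rot num} and Theorem \ref{m+=m-} for $SHSp(2d)$ and $SU(d,d)$, i.e.\ under the determinant-one hypothesis. The extension to all of $HSp(2d)$ is not routine, because the monotonicity of the fibered rotation function rests on Lemma \ref{geometric meaning of tau hat} (via Cartan decomposition and $\det=1$), and that identity fails already for a scalar element $e^{i\phi}I_{2d}\in HSp(2d)$: such a factor contributes $e^{id\phi}$ to $\det\tau_A$ while acting trivially on $\det(\A\cdot Z)$. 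Without Theorem \ref{rot num} one loses Lemma \ref{kotanimono} and hence Theorem \ref{m+=m-}, which is exactly the ingredient your Kotani/Herglotz step needs to conclude that $m^+$ determines $m^-$ when $M(A)>0$; the paper's Remark following this lemma states explicitly that the argument for Theorem \ref{mainresult} cannot be mimicked step by step for $HSp(2d)$ and $U(d,d)$ for precisely this reason. So constructing $\mathcal Z$ directly inside $HSp(2d)$ (your first two paragraphs, which are fine per se) leaves you with cocycles for which the implication \lq\lq conditions (1)--(3) $\Rightarrow M(A)=0$\rq\rq\ is unsupported.

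The paper avoids this by exploiting that a \emph{finitely-valued} $HSp(2d)$ cocycle $A$ factors as $A(x)=B(x)C(x)$ with $B$ finitely-valued in $SHSp(2d)$ and $C(x)=c(x)I_{2d}$ a finitely-valued unimodular scalar cocycle; one then applies the $SHSp(2d)$ analogue of Lemma \ref{zdense} to perturb $B$ to $B'$ satisfying the hypotheses of Theorem \ref{finitekotanitheory}, and sets $A':=B'C$. Since $C$ is a unimodular scalar it is invisible both to the M\"obius action (so condition (3) and the $m^\pm$ functions of $A'$ reduce to those of $B'$) and to the norms of the iterates (so $L(A'_\theta)=L(B'_\theta)$ for all $\theta$), whence $M(A')=M(B')=0$ by the already-proved $SHSp(2d)$ case. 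To repair your argument you would either have to prove an $HSp(2d)$ version of Theorems \ref{rot num} and \ref{m+=m-} (nontrivial, for the reason above), or rebuild $\mathcal Z$ through this scalar factorization as the paper does, taking care that the $SHSp(2d)$ factor inherits conditions (1)--(3) (e.g.\ non-periodicity is not automatic for $B''=A'/c''$ if $A'$ takes values differing only by scalars).
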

	
	\begin{proof}Consider a $HSp(2d)-$cocycle $A$ taking finitely many values, 
		Notice that there are cocycles  $B$ and $C$ also taking finitely many values and satisfying
		\begin{equation}\label{decomp U to SU time S1} 
		B(x)\in SHSp(2d), C(x)=c(x)\cdot I_{2d},A(x)=B(x)C(x)
		\end{equation} And the statement of Lemma \ref{zdense} also holds for the space of  $SHSp(2d)-$cocycle. So we can find a $SHSp(2d)-$cocycle $B'$, $L^\infty-$close to $B$ satisfying all conditions of Theorem \ref{finitekotanitheory}. 
		
		As a result, the $HSp(2d)-$cocycle $A':=B'C$ is $L^\infty-$close to $A$ and satisfying all conditions of Theorem \ref{finitekotanitheory}. In particular $M(A')=0$.
	\end{proof}
	The rest of the proof is the same as the part after Lemma \ref{zdense} for the proof of Theorem \ref{mainresult}. \footnote{see Appendix \ref{app A} for a discussion corresponds to the final part of Theorem \ref{generic+} for Hermitian symplectic cocycles. }
	\begin{rema}
	In general, for a $HSp(2d)-$cocycle $A(x)$, there is no cocycles $B,C$ in the same regularity class as $A$ and satisfying \eqref{decomp U to SU time S1}. So we can not use the result of $SHSp(2d)$ and $SU(d,d)$ cocycles directly to get the proof for $HSp(2d)$ and $U(d,d)$ cocycles. Moreover, since Lemma \ref{geometric meaning of tau hat} does not holds for general scalar matrix $A$, we can not prove corollary \ref{main2} for $HSp(2d)$ and $U(d,d)$ cocycles by mimicking the proof of Theorem \ref{mainresult} step by step.
	\end{rema}

	\subsection{Proof of Theorem \ref{mainresultjacobi}  and Corollary \ref{mainresultsch}}
	Firstly we consider the following classical result for stochastic Jacobi matrices on the strip in \eqref{jacobi strip} proved by B.Simon and S.Kotani (see \cite{kotani1988stochastic}).
	
	\begin{definition}
		For potential $v$ we define the following function,  $$M(v):=\text{the Lebesgue measure of }\{E\in \RR, L(A^{E-v})=0\}.$$And we say $v$ is deterministic if for $n\geq 0$, $v(f^n(x))$ is a measurable function of $\{v(f^k(x), k<0)\}$. 
	\end{definition}
	
	\begin{lemma}\label{simonkotanidet}
		Suppose $M(v)>0$, then $v$ is deterministic.
	\end{lemma}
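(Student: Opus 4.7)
The plan is to follow the same Kotani-theoretic pipeline used in the proof of Theorem \ref{finitekotanitheory}, combining Theorem \ref{m+=m-} with the Herglotz rigidity of Lemma \ref{herglotz} and an inverse problem specific to Jacobi cocycles on the strip. Since the deformation parameter here is the energy $E$ rather than the angle $\sigma$, a preliminary remark is that the entire machinery of Chapters \ref{rot mon} and \ref{key chapter } applies verbatim to the one-parameter family $E \mapsto A^{(E-v)}$: subharmonicity of $L^d$ in $E$, monotonicity of the fibered rotation function, and the Kotani-type identity of Lemma \ref{kotanimono} all use only that, for $\mathfrak{I}(E)>0$, the inverse cocycle acts as a strict Bergman contraction on $SD_d$ (which is immediate from $(A^{(E-v)})^{-1}$).

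First I would set up the stable sections. For $E\in\CC^+$ the contraction produces a measurable holomorphic section $m^-(E,x)\in SH_d$ (bounded away from $\partial SH_d$) satisfying the equivariance $A^{(E-v)}(x)\cdot m^-(E,x)=m^-(E,f(x))$; by construction via forward iteration $m^-(E,x)$ is measurable with respect to $\{v(f^n(x)):n\ge 0\}$. The symmetric construction on $\CC^-$ gives $m^+(E,x)$, measurable with respect to $\{v(f^n(x)):n<0\}$. The functions $E\mapsto\Phi_C^{-1}\circ m^+(E,x)$ and $E\mapsto\overline{\Phi_C^{-1}\circ m^-(\bar E,x)}$ are Herglotz matrix-valued functions on $\CC^+$.

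Next, assuming $M(v)>0$, the (analogue of) Theorem \ref{m+=m-} applied to the energy deformation yields, for a.e.~$E_0$ in the positive-measure set $\{E:L(A^{(E-v)})=0\}$,
\[
\liminf_{t\to 0^+}\int_X\norm{m^+(E_0+it,x)-m^-(E_0-it,x)}^2\,d\mu(x)=0.
\]
Hence the two Herglotz functions above have coinciding non-tangential boundary values on a set of positive Lebesgue measure in $\RR$; Lemma \ref{herglotz} then forces them to agree as holomorphic functions on $\CC^+$. Thus, pathwise in $x$, $m^+(\cdot,x)$ uniquely determines $m^-(\cdot,x)$.

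The last step is the inverse problem for Jacobi cocycles on the strip. A direct computation on $SH_d$ gives $A^{(E-v)}(x)\cdot Z=(E\cdot I_d-v(x))-Z^{-1}$, so the equivariance becomes the recursion
\[
m^-(E,f(x))=(E\cdot I_d-v(x))-m^-(E,x)^{-1}.
\]
Since $m^-(E,x)$ remains bounded away from $\partial SH_d$ uniformly on compact subsets of $\CC^-$, the recursion forces the expansion $m^-(E,x)=E^{-1}I_d+\beta(x)E^{-2}+O(|E|^{-3})$ as $|E|\to\infty$ along $\CC^-$, and matching the next order in the recursion identifies $\beta(x)=v(x)$. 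Thus $m^-(\cdot,x)$ determines $v(x)$; the recursion then yields $m^-(\cdot,f(x))$, and by induction the entire forward sequence $\{v(f^n(x)):n\ge 0\}$. Chaining the three steps
\[
\{v(f^n(x)):n<0\}\;\longrightarrow\;m^+(\cdot,x)\;\longrightarrow\;m^-(\cdot,x)\;\longrightarrow\;\{v(f^n(x)):n\ge 0\}
\]
is exactly determinism of $v$.

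The main obstacle is the inverse problem in the matrix case $d>1$: one must control the matrix asymptotic expansion of $m^-(E,x)$ and verify, in particular, that the leading coefficient is indeed scalar $I_d$ and that the Hermitian/symmetric structure of the correction term gives back $v(x)$; this is more delicate than the $SL(2,\RR)$ scalar case of \cite{kotani1989jacobi}, \cite{simon1983kotani}. A secondary, essentially bookkeeping, obstacle is checking that every geometric ingredient of Theorem \ref{m+=m-} (monotonicity of the fibered rotation, subharmonicity, and the key computation of $\partial_t L^d$) transfers from the $R_{\sigma+it}$ deformation to the energy deformation $E\mapsto E+it$ of the Jacobi cocycle.
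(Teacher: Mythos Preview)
The paper does not prove this lemma at all: it is stated as a classical result of Kotani and Simon and simply referred to \cite{kotani1988stochastic}. Your proposal is, in outline, a reconstruction of that very Kotani--Simon argument in the language of the present paper, and the logical skeleton (past determines $m^+$; Theorem~\ref{m+=m-}--type identity on $\{L=0\}$; Herglotz rigidity forces $m^+\Rightarrow m^-$; an inverse problem recovers the future potential from $m^-$) is correct.

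Two remarks. First, the ``secondary bookkeeping obstacle'' you flag --- transferring the monotonicity/subharmonicity/Kotani identity from the $R_{\sigma+it}$ deformation to the energy deformation $E\mapsto E+it$ --- is precisely what is done in \cite{kotani1988stochastic} (and is also touched on in Appendix~\ref{app B} of the paper, where monotonicity of $E\mapsto (A^{(E-v)})^{(2)}$ is checked and the $m$-function/rotation number machinery for the energy parameter is invoked). Second, your matrix asymptotic $m^-(E,x)=E^{-1}I_d+v(x)E^{-2}+O(|E|^{-3})$ is exactly the inverse-problem step in \cite{kotani1988stochastic}; making it rigorous for $d>1$ requires the a~priori bound on $m^-$ away from $\partial SH_d$ (which you note), and this is carried out there. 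So your approach is not an alternative to the paper's --- it \emph{is} the cited proof, which the paper chose to quote rather than reproduce.
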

	
	Combining Lemma \ref{simonkotanidet} with semi-continuity argument in subsection \ref{semicontinuity arg} (the proof is the same as in \cite{avila2005generic}, using harmonicity of $L^d$ on upper half plane), we can prove result similar to Theorem \ref{generic+} for stochastic Schr\"odinger operators and Jacobi matrices on the strips since non-periodic potentials which taking finitely many values are dense in $L^\infty(X, Her(d)), L^\infty(X, Sym_d\RR)$ and $L^\infty(X,\RR^d)$.\footnote{in that case where $\mu-$almost all points are periodic, see Appendix \ref{app B}.}
	
	Then using local regularization formula in \cite{avila2011density} (the proof is similar to Theorem \ref{poissoncontract}), we can get the proof of Theorem \ref{mainresultjacobi} and Corollary \ref{mainresultsch}.
	
	\appendix
	\section{Generic periodic (Hermitian) symplectic cocycles}\label{app A}
	In this section we finish the proof of Theorem \ref{generic+} (also for Hermitian symplectic cocycles) by considering the generic periodic cocycles. Without loss of generality, we consider the dynamics $(f,X,\mu)$ as the following: $f(x)=x+1$ for  $x\in X:=\ZZ/n\ZZ$, and for any $Z\subset X$,  $\mu(Z)=\frac{1}{n}\#(Z)$. 
	
	It is easy to see there is a set $\mathcal{O}\subset C(X,HSp(2d))=HSp(2d)^n$ such that $\mathcal{O}, \mathcal{O}\cap Sp(2d,\RR), \mathcal{O}\cap SHSp(2d)$ are residual sets in $HSp(2d),Sp(2d,\RR), SHSp(2d)$ respectively and satisfy the following property:  for all $\theta\in \RR/2\pi\ZZ$, the geometric and algebraic multiplicities of eigenvalues of $A_\theta^{(n)}(x)$ are $1$ and at most $2$ respectively. And there are only finite $\theta\in \RR/2\pi\ZZ$ such that $A_\theta^{(n)}(x)$ has repeated eigenvalues.
	
	We only need to prove the following lemma:
	
	\begin{lemma}
	There is a constant $C$ independent of $n$ such that for $A\in \mathcal{O}$, there exists $\theta\in (-\frac{C}{n}, \frac{C}{n})$ such that $L(A_\theta)>0$.
 	\end{lemma}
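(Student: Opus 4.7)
The plan is to argue by contradiction. Assume there is an $A\in\mathcal O$ and an absolute constant $C$ (to be fixed below, independent of $n$) such that $L(A_\theta)=0$ for every $\theta\in J:=(-C/n,C/n)$. Because the base is a single periodic orbit of length $n$ under the uniform measure, $L(A_\theta)=\frac{1}{n}\log\sigma_1(A_\theta^{(n)}(x))$, and the Hermitian symplectic reciprocal spectral symmetry $\lambda\leftrightarrow 1/\bar\lambda$ makes $L(A_\theta)=0$ equivalent to $A_\theta^{(n)}(x)$ being unitary, i.e., to all $2d$ eigenvalues lying on $S^1$. By the conditions defining $\mathcal O$, only finitely many $\theta\in J$ carry a repeated eigenvalue, so $J$ decomposes into finitely many maximal open subintervals $I_j$ on which $A_\theta^{(n)}(x)$ is elliptic with $2d$ simple eigenvalues on $S^1$.

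The heart of the proof is the uniform estimate $|I_j|\le C_0/n$ for some absolute constant $C_0$ depending only on $\sup_x\|A(x)\|$. It is a quantitative form of the fibered rotation monotonicity from Section \ref{rot mon}. The cocycle identity \eqref{gdit} and Lemma \ref{geometric meaning of tau hat} give that each rotation factor $R_\theta$ contributes exactly $-d\theta$ to $\mathfrak R\hat\tau$, while the lifted action of each $A$-factor preserves the partial order of Lemma \ref{order on covering}. Differentiating $A_\theta^{(n)}(x)=R_\theta A(f^{n-1}x)\cdots R_\theta A(x)$ in $\theta$ and combining with the Krein-signature decomposition of $HSp(2d)$ (signature $(d,d)$), one sees that the $d$ Krein-positive and $d$ Krein-negative eigenphases of $A_\theta^{(n)}(x)$ rotate on $S^1$ with combined averaged speeds $+nd$ and $-nd$ respectively, up to bounded corrections controlled by \eqref{arg est}. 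Since the $2d$ phases start at distinct points of $S^1$, a positive-type phase and a negative-type phase must meet within parameter time $O(1/n)$, ending the simple elliptic regime at a boundary point of $I_j$.

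It remains to rule out the shared boundary $\theta_0$ of two consecutive intervals $I_j,I_{j+1}\subset J$. At $\theta_0$ the matrix $A_{\theta_0}^{(n)}(x)$ would carry a nontrivial $2\times 2$ Jordan block at a unit-modulus eigenvalue, which is allowed by $\mathcal O$ at only finitely many values of $\theta$. However, a repeated eigenvalue of the real-analytic one-parameter family $\theta\mapsto A_\theta^{(n)}(x)$ lying \emph{exactly} on $S^1$ is a codimension-two condition on the starting data $A\in HSp(2d)^n$, hence for $A$ in a suitable residual subset (absorbed into the definition of $\mathcal O$) this situation simply does not occur. Consequently $J$ must be contained in a single $I_j$. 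Taking $C=2C_0$, this contradicts $|J|=2C/n>C_0/n\ge|I_j|$ and produces the desired $\theta\in J$ with $L(A_\theta)>0$.

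The main obstacle is the Krein-signature bookkeeping underlying the $O(1/n)$ length bound: the identification of the $+nd$ and $-nd$ averaged rotation speeds must be carried through the full $n$-fold product while the corrections coming from the $A$-factors are controlled uniformly in $n$ by Lemma \ref{dpi}. Once this is in place, the strategy adapts verbatim to the real symplectic and $SHSp(2d)$ settings via the Cayley transform, since the invariant cone, the Shilov boundary action and the monotonic lift of $\det$ from Section \ref{rot mon} are all available there; the $HSp(2d)$ and $U(d,d)$ cases then follow by the factorisation \eqref{decomp U to SU time S1} used elsewhere in the paper.
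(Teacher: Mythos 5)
There is a genuine gap at the decisive step, namely how you rule out the shared boundary point $\theta_0$ of two consecutive elliptic intervals. You assert that a repeated eigenvalue of the family $\theta\mapsto A_\theta^{(n)}(x)$ lying on $S^1$ is a codimension-two condition that can be absorbed into the definition of $\mathcal O$, so that it "simply does not occur" and $J$ sits inside a single band $I_j$. This is false for one-parameter families in $HSp(2d)$ (or $Sp(2d,\RR)$): on the unit circle the spectrum carries Krein signatures, and a collision of two unit-modulus eigenvalues of opposite signature is the codimension-\emph{one} wall separating "pair on the circle" from "hyperbolic pair off the circle" (already for $SL(2,\RR)$ this is the parabolic locus $\mathrm{tr}=\pm2$ between elliptic and hyperbolic, which no genericity assumption on the family can remove). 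Indeed the paper's own $\mathcal O$ explicitly allows finitely many $\theta$ at which $A_\theta^{(n)}(x)$ has a repeated eigenvalue of geometric multiplicity $1$ and algebraic multiplicity $2$; it cannot exclude them. Consequently your conclusion that $J$ is contained in one band does not follow, and the contradiction with $|J|>C_0/n$ collapses. What actually has to be proved — and what the paper proves — is that at such a collision the spectrum cannot stay entirely on the unit circle and simple on \emph{both} sides of $\theta_0$. The paper does this analytically, not by transversality: it puts $A_{\theta_0}^{(n)}$ in a canonical Hermitian-symplectic normal form on the generalized eigenspace (with off-diagonal entry $\beta(\theta_0)\neq0$), observes that the function $\mathrm{tr}\,P_\theta/\sqrt{\mathrm{tr}\,\Lambda^2 P_\theta}$ built from the spectral projection onto the colliding pair is $\le 2$ on both sides and $=2$ at $\theta_0$, so its derivative vanishes there, and computes that this forces $z_{11}=0$ for the lower-left block of $\frac{d}{d\theta}\big|_{\theta_0}A_\theta^{(n)}(A_{\theta_0}^{(n)})^{-1}$; this contradicts the negative definiteness of that block, which follows from monotonicity of the family $\theta\mapsto A_\theta$ and its persistence under iteration (Lemma \ref{mono iter mono}). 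Some argument of this kind (Krein collisions of opposite signature expel eigenvalues from the circle for monotone families) is indispensable; it cannot be replaced by a genericity count.

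Your band-length estimate $|I_j|\le C_0/n$ is in the right spirit — it is the paper's Lemma \ref{length of band} — but as written it is also only heuristic: the statement that the averaged rotation speeds are $\pm nd$ "up to bounded corrections controlled by \eqref{arg est}" is exactly the content of the quantitative inputs \eqref{rho var interval} (total phase variation at most $O(d\pi)$ across a simple elliptic band, because phases of opposite type cannot cross) and the derivative bound \eqref{A2}, $-\frac{d\rho}{d\theta}=\lim_{t\to0^+}L^d(A_{\theta+it})/t\ge d$, which comes from the Kotani-type estimate \eqref{l/t}; these need to be invoked, not merely alluded to. Finally, note that in this finite, periodic setting the reduction of $HSp(2d)$ to $SHSp(2d)$ via \eqref{decomp U to SU time S1} is harmless, but the paper's remark cautions that this factorisation is not available in general regularity classes, so it should not be presented as the generic mechanism for the non-special groups.
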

	
	\begin{proof}
	Without loss of generality, we only need to consider those $A$ with determinant equal to $1$. We fix such an $A\in \mathcal{O}$. Suppose there is an open interval $I$ containing $0$ and  $|I|>O(\frac{1}{n})$, such that 
	\begin{equation}\label{B1}
	\forall \theta\in I, \sigma(A_\theta^{(n)}(x)) \subset \{|z|=1\}
	\end{equation}where $|I|$ is the length of $I$. We prove the following lemma:
	
	\begin{lemma}\label{length of band}
		For any interval $I'$ such that 
		\begin{equation}
		\forall \theta\in I', \text{ all eigenvalues of } A_\theta^{(n)}(x) \text{ are simple and norm }1
		\end{equation}
		we have $$|I'|\leq O(\frac{1}{n})$$
	\end{lemma}
	
	\begin{proof}Suppose $I'=(a,b)$, for $\theta\in I'$, we conjugate it $\A_\theta^{(n)}$ to the matrix with form $$\diag(e^{i\rho_1(\theta)},\dots,e^{i\rho_d(\theta)},e^{-i\rho'_1(\theta)},\dots, e^{-\rho'_d(\theta)})$$
		where $\rho,\rho'$ depend continuously on $\theta$, $\sum_{i=1}^{d}\rho_i=\sum_{i=1}^{d}\rho'_i$ and $\rho_i+\rho'_j\notin 2\pi \ZZ$.
		
		Therefore $\rho_i'(b-)+\rho_i(b-)-\rho_i(a+)-\rho_i'(a+)<4\pi$. As a result, by the definition and property of fibered rotation function $\rho$, we have 
		\begin{eqnarray}\label{rho var interval}
			\rho(a)-\rho(b)&=&\frac{1}{n}\sum_{i=1}^{d}\rho_i'(b-)-\frac{1}{n}\sum_{i=1}^{d}\rho_i'(a+)\\
			&=&\frac{1}{2n}\sum_{i=1}^{d}\rho_i'(b-)+\rho_i(b-)-\rho_i'(a+)-\rho_i(a+)\nonumber\\
			&\leq & \frac{2d\pi}{n}\nonumber
		\end{eqnarray}
		But as in the proof of Lemma \ref{kotanimono} and \eqref{l/t}, for almost all $\theta\in I'$, we have 
		\begin{equation}\label{A2}
		-\frac{d\rho}{d\theta}=\lim_{t\to 0^+}\frac{\partial L^d(A_{\theta+it})}{\partial t}=\lim_{t\to 0}\frac{L^d(A_{\theta+it})}{t}\geq d
		\end{equation}
		Combined with  \eqref{rho var interval}, we get the bound of $|I'|$.
	\end{proof}
	
	As a result, there are two intervals $I_1, I_2\subset I$ satisfying \eqref{B1} and sharing common boundary point $\theta_0$ such that $\det(\lambda I_d-A_{\theta_0}^{(n)}(x))$ has a double root $e^{i\rho(\theta_0)}$.  Denote the corresponding general eigenspace\footnote{A general eigenspace of eigenvalue $\lambda$ of linear transform $A$ is defined by $V_\lambda:=\{v: \exists n, (A-\lambda)^n\cdot v=0\}$} by $V_{\theta_0}$.

To state the next lemma and for later use we introduce some basic concepts of Hermitian symplectic geometry on $\CC^{2d}$ (see for example \cite{harmer2007hermitian}).
\begin{definition}
	The two-form $<\cdot,\cdot>$ is linear in the second argument and conjugate
	linear in the first argument, is a hermitian symplectic form if $$<\phi,\psi>=-\overline{<\psi,\phi>}$$
\end{definition}

Let $e_i$ be the standard basis of $\CC^{2d}$, then we can define a classical Hermitian symplectic form $<\cdot,\cdot>$ such that $<e_i,e_{d+i}>=1$ for $1\leq i\leq d$ and $<e_i,e_j>=0$ if $|i-j|\neq d$. The Hermitian symplectic groups are all transformation preserving this form. A basis satisfying the same relation is called a \textit{canonical basis}. A subspace $V$ of $\CC^{2d}$ is called a \textit{Hermitian symplectic subspace } if $<\cdot,\cdot>|_V$ is non-degenerate. If an even dimensional Hermitian symplectic subspace $V$ admits a canonical basis then we say $V$ is canonical. A subspace $N$ is called \textit{isotropic} if $N\subset N^\perp:=\{v, <v,w>=0, \forall w\in N \}$. For a $2l-$dimensional Hermitian symplectic subspace $V$, if there is an $l-$dimensional isotropic subspace $N\subset V$, then we call $N$ is a \textit{Lagrange plane} of $V$. It can be proved that a Hermitian symplectic subspace $V$ contains a Lagrange plane if and only if it is canonical (see \cite{harmer2007hermitian}).

Now we state the lemma:
	\begin{lemma}
		By a suitable choice of canonical basis, we can assume $A_{\theta_0}^{(n)}, A_{\theta_0}^{(n)}|_{V_{\theta_0}}$ to be \begin{eqnarray}\label{B3}
		\begin{pmatrix}
		e^{i\rho(\theta_0)}&&\beta(\theta_0)&\\
		&\ast_{(d-1)\times (d-1)}&&\ast_{(d-1)\times (d-1)}\\
		&&e^{i\rho(\theta_0)}&\\
		&\ast_{(d-1)\times (d-1)}&&\ast_{(d-1)\times (d-1)}
		\end{pmatrix}\\ 
		\begin{pmatrix}
		e^{i\rho(\theta_0)}&&\beta(\theta_0)&\\
		&0_{(d-1)\times (d-1)}&&0_{(d-1)\times (d-1)}\\
		&&e^{i\rho(\theta_0)}&\\
		&0_{(d-1)\times (d-1)}&&0_{(d-1)\times (d-1)}\label{B4}
		\end{pmatrix}
		\end{eqnarray}respectively, where $\beta(\theta_0)\neq 0$.
	\end{lemma}
	\begin{proof}
Notice that different general eigenspaces are mutually orthogonal (in the Hermitian symplectic sense). So $V_{\theta_0}$ is a Hermitian symplectic subspace of $\CC^{2d}$.
		
		Moreover consider the only one eigenvector $v\in V_{\theta_0}-\{0\}$ and $w\in V_{\theta_0}- \CC\cdot v$, then
		\begin{equation}
		A\cdot v=e^{i\rho(\theta_0)}v, A\cdot w-e^{i\rho(\theta_0)}w\in \CC\cdot v-\{0\}
		\end{equation}
		Therefore computing $<Av,Aw>-<v,w>$ we get 
		\begin{equation}
		<v,v>=0
		\end{equation}
		Since $V_{\theta_0}$ contains a Lagrange plane, it is a canonical Hermitian symplectic subspace of $\CC^{2d}$. Then we can pick a vector $v'\in V_{\theta_0}-\CC\cdot v$ such that $<v',v'>=0$ and by further normalization we can assume $<v,v'>=1$.
		
		It is easy to prove that $V_{\theta_0}^\perp$ is also a canonical Hermitian symplectic subspace. Then we can extend $v,v'$ to a canonical basis of $\CC^{2d}$. Then $A_{\theta_0}^{(n)}, A_{\theta_0}^{(n)}|_{V_{\theta_0}}$ have the form in \eqref{B3} with respect to this canonical basis.
	\end{proof}
Choose a contour $\Gamma$ enclosing $e^{i\rho(\theta_0)}$ but no other points of $\sigma(A_{\theta_0}^{(n)})$, then for $\theta$ close to $\theta_0$, $\Gamma$ encloses exactly two points of $\sigma(A_{\theta}^{(n)})$. Therefore there is a $2-$dimensional invariant space $V_\theta $ for eigenvalues of $A_\theta^{(n)}$ contained in the bounded region with  boundary $\Gamma$. 

Consider the spectral projection  $\frac{1}{2\pi i}\int_{\Gamma}\frac{1}{z-A_\theta^{(n)} }dz$ of $A_\theta^{(n)}$ onto $V_\theta$. Since $\sigma(A_{\theta}^{(n)})\subset \{|z|=1\}$, we have (by suitable choice of branch of $z\mapsto \sqrt{z}$): 
\begin{eqnarray}\label{B7}
\frac{\text{tr}(\frac{1}{2\pi i}\int_{\Gamma}\frac{z}{z-A_\theta^{(n)} }dz)}{\sqrt{\text{tr}(\Lambda^2(\frac{1}{2\pi i}\int_{\Gamma}\frac{z}{z-A_\theta^{(n)} }dz))}}&\leq& 2\\
\frac{\text{tr}(\frac{1}{2\pi i}\int_{\Gamma}\frac{z}{z-A_{\theta_0}^{(n)} }dz)}{\sqrt{\text{tr}(\Lambda^2(\frac{1}{2\pi i}\int_{\Gamma}\frac{z}{z-A_{\theta_0}^{(n)} }dz))}}&=& 2\label{B8}
\end{eqnarray}
Therefore 
\begin{equation}
\frac{d}{d\theta}|_{\theta=\theta_0}\frac{\text{tr}(\frac{1}{2\pi i}\int_{\Gamma}\frac{z}{z-A_\theta^{(n)} }dz)}{\sqrt{\text{tr}(\Lambda^2(\frac{1}{2\pi i}\int_{\Gamma}\frac{z}{z-A_\theta^{(n)} }dz))}}=0
\end{equation}
Then we get 
\begin{eqnarray}\label{B10}
&& 2\frac{d}{d\theta}|_{\theta=\theta_0}\text{tr}(\frac{1}{2\pi i}\int_{\Gamma}\frac{z}{z-A_{\theta}^{(n)} }dz)\cdot \text{tr}(\Lambda^2(\frac{1}{2\pi i}\int_{\Gamma}\frac{z}{z-A_{\theta_0}^{(n)} }dz))\\
&=&\text{tr}(\frac{1}{2\pi i}\int_{\Gamma}\frac{z}{z-A_{\theta_0}^{(n)} }dz)\cdot \frac{d}{d\theta}|_{\theta=\theta_0}\text{tr}(\Lambda^2(\frac{1}{2\pi i}\int_{\Gamma}\frac{z}{z-A_{\theta}^{(n)} }dz))\nonumber
\end{eqnarray}

By \eqref{B4}, we have 
\begin{eqnarray}
\text{tr}(\Lambda^2(\frac{1}{2\pi i}\int_{\Gamma}\frac{z}{z-A_{\theta_0}^{(n)} }dz))&=& e^{2i\rho(\theta_0)}\label{B11}\\
\text{tr}(\frac{1}{2\pi i}\int_{\Gamma}\frac{z}{z-A_{\theta_0}^{(n)} }dz)&=& 2e^{i\rho(\theta_0)}\label{B12}
\end{eqnarray}

Since $\frac{d}{d\theta}|_{\theta=\theta_0}A_{\theta}^{(n)}{A_{\theta_0}^{(n)}}^{-1}\in \mathfrak{shsp}(2d)$, we can assume 
\begin{equation}\label{B13}
\frac{d}{d\theta}|_{\theta=\theta_0}A_{\theta}^{(n)}{A_{\theta_0}^{(n)}}^{-1}=\begin{pmatrix}
X&Y\\
Z&-X^\ast
\end{pmatrix}
\end{equation}where $Y=Y^\ast, Z=Z^\ast$ and $\text{tr}(X-X^\ast)=0$. Let $X,Y,Z$ be $(x_{ij})_{ i,j=1}^d, (y_{ij})_{i,j=1}^d, (z_{ij})_{i,j=1}^d$ respectively.

By computation we get,
\begin{equation}
\frac{d}{d\theta}|_{\theta=\theta_0}\text{tr}(\frac{1}{2\pi i}\int_{\Gamma}\frac{z}{z-A_{\theta}^{(n)} }dz)\\
=\text{tr}(\frac{d}{d\theta}|_{\theta=\theta_0}A_{\theta} \cdot \frac{1}{2\pi i}\int_{\Gamma}\frac{z}{(z-A_{\theta_0}^{(n)})^2}dz)
\end{equation}

Notice that $\frac{1}{2\pi i}\int_{\Gamma}\frac{z}{(z-A_{\theta_0}^{(n)})^2}dz$ is the spectral projection of $A_{\theta_0}^{(n)}$ onto $V_{\theta_0}$, we have
\begin{eqnarray}\label{B14}
&&\frac{d}{d\theta}|_{\theta=\theta_0}\text{tr}(\frac{1}{2\pi i}\int_{\Gamma}\frac{z}{z-A_{\theta}^{(n)} }dz)\\
&=&\text{tr}(\frac{d}{d\theta}|_{\theta=\theta_0}A_{\theta}^{(n)}{A_{\theta_0}^{(n)}}^{-1}\cdot A_{\theta_0}^{(n)}|_{V_{\theta_0}})\nonumber\\
&=&e^{i\rho(\theta_0)}(x_{11}-\overline{x_{11}})+\beta(\theta_0)z_{11}\nonumber
\end{eqnarray}

Denote $A_{\theta}^{(n)}|_{V_{\theta}}$ by $B_\theta=(b_{i,j}(\theta))_{1\leq i,j\leq d}$, then
\begin{eqnarray}\label{B15}
&&\frac{d}{d\theta}|_{\theta=\theta_0}\text{tr}(\Lambda^2(\frac{1}{2\pi i}\int_{\Gamma}\frac{z}{z-A_{\theta}^{(n)} }dz))\\
&=&\frac{d}{d\theta}|_{\theta=\theta_0}\text{tr}(\Lambda^2(A_{\theta}^{(n)}|_{V_{\theta}})) \nonumber\\
&=&\frac{d}{d\theta}|_{\theta=\theta_0}\text{tr}(\Lambda^2(B_\theta)) \nonumber\\
&=&\sum_{i<j}\begin{vmatrix}b'_{i,i}(\theta_0)& b'_{i,j}(\theta_0)\\
b_{j,i}(\theta_0)&b_{j, j}(\theta_0)
\end{vmatrix}+\begin{vmatrix}b_{i,i}(\theta_0)& b_{i,j}(\theta_0)\\
b'_{j,i}(\theta_0)&b'_{j, j}(\theta_0)
\end{vmatrix} \nonumber\\
&=&\begin{vmatrix}b'_{1,1}(\theta_0)& b'_{1,d+1}(\theta_0)\\
b_{d+1,1}(\theta_0)&b_{d+1, d+1}(\theta_0)
\end{vmatrix}+\begin{vmatrix}b_{1,1}(\theta_0)& b_{1,d+1}(\theta_0)\\
b'_{d+1,1}(\theta_0)&b'_{d+1, d+1}(\theta_0)
\end{vmatrix} \nonumber\\
&&(\text{ since } b_{i,j}(\theta_0)=0 \text{ except } \{i,j\}\subset \{1, d+1\})\nonumber
\end{eqnarray}

By \eqref{B3},\eqref{B4},\eqref{B13} we have $$\begin{pmatrix}
b_{1,1}(\theta_0)&b_{1,d+1}(\theta_0)\\
b_{d+1,1}(\theta_0)&b_{d+1,d+1}(\theta_0)
\end{pmatrix}=\begin{pmatrix}
e^{i\rho(\theta_0)}&\beta(\theta_0)\\
&e^{i\rho(\theta_0)}
\end{pmatrix}$$
$$\begin{pmatrix}
b'_{1,1}(\theta_0)&b'_{1,d+1}(\theta_0)\\
b'_{d+1,1}(\theta_0)&b'_{d+1,d+1}(\theta_0)
\end{pmatrix}=
\begin{pmatrix}e^{i\rho(\theta_0)}x_{11}&\beta(\theta_0)x_{11}+e^{i\rho(\theta_0)}y_{11}
\\e^{i\rho(\theta_0)}z_{11}&\beta(\theta_0)z_{11}-e^{i\rho(\theta_0)}\overline{x_{11}}
\end{pmatrix}$$
Then combining with \eqref{B15} we get 
\begin{equation}\label{B16}
\frac{d}{d\theta}|_{\theta=\theta_0}\text{tr}(\Lambda^2(\frac{1}{2\pi i}\int_{\Gamma}\frac{z}{z-A_{\theta}^{(n)} }dz))=e^{2i\rho(\theta_0)}(x_{11}-\overline{x_{11}})
\end{equation}

Combining \eqref{B16},\eqref{B14},\eqref{B11},\eqref{B12},\eqref{B10} we get 
\begin{equation}\label{z11=0}
z_{11}=0
\end{equation}

	Now we define the \textit{monotonic} special Hermitian symplectic cocycles: consider a cone $\mathcal{C}$ on $\mathfrak{shsp}(2d,\RR)$ defined by \begin{equation}
	\mathcal{C}:=\{W\in \mathfrak{shsp}(2d,\RR): J\cdot W \text{ is negative definite}\}
	\end{equation}where $J$ is defined in Definition \ref{sympl group def}. Obviously for any $W_1,W_2\in \mathcal{C}$, $W_1+W_2\in \mathcal{C}$.
	\begin{definition}
	Let $\theta\mapsto D_\theta\in C(X,SHSp(2d,\RR))$ be a one parameter family of continuous symplectic cocycles and $C^1$ in $\theta$. We say it is monotonic at $\theta_0$ if \begin{equation}
	\frac{d}{d\theta}|_{\theta=\theta_0}D_{\theta}(x){D_{\theta_0}^{-1}}(x)\in \mathcal{C}
	\end{equation}for any $x\in X$.
	\end{definition}
	We have:
	\begin{lemma}\label{mono iter mono}
	If the one parameter family of symplectic cocycle $D_\theta$ is monotonic at $\theta_0$, then for any $n>0$, $D_\theta^{(n)}$ is also monotonic at $\theta$.
	\end{lemma}
	\begin{proof}
	First of all we prove the invariance of the cone $\mathcal{C}$ under inner automorphism.
	 \begin{lemma}\label{inv cone}
	 For any $g\in SHSp(2d,\RR)$, $g\mathcal{C}g^{-1}=\mathcal{C}$
	 \end{lemma}
	 \begin{proof}Notice that for any $g\in SHSp(2d,\RR)$, $Jg=(g^{-1})^\ast J$.
	 Then for any $W\in \mathcal{C}$, 
	 \begin{eqnarray}
	 JgWg^{-1}=(g^{-1})^\ast JWg^{-1}
	 \end{eqnarray}
	 is negative definite (since $g$ is invertible and $JW$ is negative definite). Therefore we have for any $g$, $g\mathcal{C}g^{-1}\subset\mathcal{C}$. It is easy to prove that the equality holds.
	 \end{proof}
	 Suppose $D_\theta$ is monotonic at $\theta_0$, then for any $x\in X$, using Lemma \ref{inv cone}, we have
	 \begin{eqnarray}
	&&\frac{d}{d\theta}|_{\theta=\theta_0}D^{(n)}_{\theta}(x) {D^{(n)}_{\theta_0}}^{-1}(x)\\
	 &=&\sum_{i=0}^{n}D_{\theta_0}(f^{n-1}(x))\cdots D_{\theta_0}(f^{i}(x))\cdot (\frac{d}{d\theta}|_{\theta=\theta_0}D_\theta(f^i(x))D_{\theta_0}(f^i(x))^{-1})\nonumber\\
	 &&\cdot(D_{\theta_0}(f^{n-1}(x))\cdots D_{\theta_0}(f^{i}(x)))^{-1}\nonumber
	 \in \mathcal{C}
	 \end{eqnarray}which implies $D_\theta^{(n)}$ is also monotonic at $\theta_0$.
	\end{proof}

Come back to the discussion of periodic cocycle $A_\theta$. It is easy to check that the one parameter family cocycles $\theta\mapsto A_\theta$ is monotonic on $\RR$, then by Lemma \ref{mono iter mono}, $\theta\mapsto A_\theta^{(n)}$ is monotonic at $\theta_0$. As a result, $J\cdot\frac{d}{d\theta}|_{\theta=\theta_0}A_{\theta}^{(n)}{A_{\theta_0}^{(n)}}^{-1}$ is negative definite, using \eqref{B13} we get $Z$ is negative definite, which contradicts with \eqref{z11=0}. In summary, $I_1, I_2$ cannot have common boundary point. 
\end{proof}

\section{Generic periodic Schr\"odinger operator and Jacobi matrices on the strip}\label{app B}
For periodic Sch\"odinger operator and Jacobi matrices on the strip, consider the corresponding (Hermitian) symplectic cocycle: $$A^{(E-v)}: \ZZ/n\ZZ\to Sp(2d,\RR) \text{ or } SHSp(2d,\RR)$$where $A^{(v)}(x):=\begin{pmatrix}
v(x)&-I_d\\
I_d
\end{pmatrix}, v(x)\in \RR^d\hookrightarrow Sym_d\RR, Sym_d\RR$ or $her(d)$ is the corresponding  potential.   As in Appendix \ref{app A}, we hope to prove the following lemma:
\begin{lemma}\label{per sch}
There is a constant $C$ independent of  $n$ such that for generic potential $v$ in $(\RR^d)^n$, $(Sym_d\RR)^n$ or $her(d)^n$, there exists $E\in (-\frac{C}{n}, \frac{C}{n})$ such that $L(A^{(E-v)})>0$.
\end{lemma}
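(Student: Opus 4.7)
The plan is to mimic the proof of the analogous statement for generic periodic (Hermitian) symplectic cocycles given in Appendix \ref{app A}, with the Schr\"odinger/Jacobi family $E \mapsto A^{(E-v)}$ playing the role of the rotation family $\theta \mapsto A_\theta$. First I would identify a residual set $\mathcal{O}$ of periodic potentials $v$ for which the monodromy $A^{(E-v)(n)}(0)$ has simple eigenvalues except at finitely many critical energies, where a single eigenvalue has algebraic multiplicity $2$ and geometric multiplicity $1$, together with an additional genericity condition discussed below. Then, arguing by contradiction, if $L(A^{(E-v)}) = 0$ throughout an interval $I \ni 0$ with $|I| > C/n$, I would derive a band-length bound analogous to Lemma \ref{length of band}: the Kotani-type inequality \eqref{l/t} yields $-d\rho/dE \geq d$ a.e.\ on the spectrum, which combined with the non-increasing property of the fibered rotation function $\rho$ forces any interval of ``simple-elliptic'' energies to have length $\leq O(1/n)$. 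Hence for $C$ chosen large enough, two such sub-intervals of $I$ must meet at a critical energy $E_0 \in I$ where $A^{(E_0-v)(n)}(0)$ has a repeated eigenvalue $e^{i\rho(E_0)}$ on the unit circle.

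The spectral-projection computation \eqref{B7}--\eqref{z11=0} transfers verbatim and produces $z_{11} = 0$, where $z_{11}$ is the $(d+1, 1)$-entry, in a canonical basis adapted to the $2$-dimensional generalized eigenspace, of
\begin{equation*}
\mathcal{W} := \left.\tfrac{d}{dE}\right|_{E=E_0} A^{(E-v)(n)}(0) \cdot (A^{(E_0-v)(n)}(0))^{-1} = \sum_{i=0}^{n-1} g_i W_0 g_i^{-1},
\end{equation*}
with $W_0 = \bigl(\begin{smallmatrix} 0 & I_d \\ 0 & 0 \end{smallmatrix}\bigr) \in \mathfrak{sp}(2d,\RR)$ (respectively $\mathfrak{hsp}(2d)$) and $g_i$ the $i$-step partial monodromy at $E_0$. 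In Appendix \ref{app A} the contradiction came from the monotonicity condition $J\mathcal{W} < 0$, which forced $z_{11} < 0$. That route does not apply directly here: $W_0$ lies only on the boundary of the monotone cone, since $J W_0 = \bigl(\begin{smallmatrix} 0 & 0 \\ 0 & -I_d \end{smallmatrix}\bigr) \leq 0$ without strictness.

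The main obstacle is thus to recover the strict negativity $z_{11} < 0$. A direct block computation using symplecticity ($g_i^{-1} = -J g_i^T J$ in the real case, $-J g_i^{\ast} J$ in the Hermitian case) yields $Z = -\sum_i C_i C_i^T$ (respectively $-\sum_i C_i^{\ast} C_i$), where $C_i$ is the bottom-left $d \times d$ block of $g_i$; this is manifestly negative semi-definite but not necessarily strict. I would include in $\mathcal{O}$ the extra requirement that, at each of the finitely many critical energies $E_0(v)$, the partial monodromies satisfy $\bigcap_i \ker C_i = \{0\}$, which is equivalent to $Z < 0$ strictly and hence forces $z_{11} < 0$. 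Because the critical energies are finitely many roots of a polynomial in the finitely many matrix entries of $v$, and because the kernel condition at each root is an open algebraic condition on these entries, the bad locus is contained in a countable union of proper real-analytic subvarieties of the parameter space and $\mathcal{O}$ remains residual. The strict inequality $z_{11} < 0$ then contradicts $z_{11} = 0$, yielding the desired $E \in (-C/n, C/n)$ with $L(A^{(E-v)}) > 0$ and completing the proof.
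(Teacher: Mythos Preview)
Your overall structure is correct and would yield a valid proof, but the paper takes a cleaner route at the one place you identify as problematic. Instead of working with $E\mapsto A^{(E-v)}$ directly and coping with the fact that $W_0=\bigl(\begin{smallmatrix}0&I_d\\0&0\end{smallmatrix}\bigr)$ lies only on the boundary of the monotone cone, the paper passes to the \emph{second iterate} $B_E:=(A^{(E-v)})^{(2)}$ and shows by a short block computation (Lemma~\ref{sch mono}) that $E\mapsto B_E$ is strictly monotonic: $J\,\tfrac{d}{dE}B_E\,B_E^{-1}<0$. Once this is in hand, the entire endgame of Appendix~\ref{app A} (the spectral-projection identity leading to $z_{11}=0$ and the contradiction $z_{11}<0$) applies to $B_E^{(n)}$ verbatim, with no extra genericity hypothesis needed. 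The paper also uses this second-iterate monotonicity, together with an explicit conjugation bringing $\hat B_{E+it}$ into the form $\bigl(\begin{smallmatrix}e^{-t}&\\&e^{t}\end{smallmatrix}\bigr)\cdot(\text{const})+o(t)$ after Cayley transform, to justify the band-length bound (Lemma~\ref{length of band scho}); your appeal to \eqref{l/t} is morally right but that inequality was proved for the rotation family $A_{\sigma+it}$, so some adaptation is required here.

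Regarding your proposed fix: it works, but the added genericity condition is in fact unnecessary. In the sum $\mathcal W=\sum_i g_i W_0 g_i^{-1}$, one of the conjugators is a single Schr\"odinger step $A^{(E_0-v)}(k)=\bigl(\begin{smallmatrix}E_0-v(k)&-I_d\\ I_d&0\end{smallmatrix}\bigr)$, whose bottom-left block is $I_d$; hence $Z=-\sum_i C_iC_i^{\,T}\le -I_d<0$ automatically (in your formula the relevant kernel is $\bigcap_i\ker C_i^{T}$, not $\bigcap_i\ker C_i$, but either way it is trivial). So your route and the paper's both reach the same strict inequality $z_{11}<0$; the paper's second-iterate trick just packages this more uniformly and simultaneously supplies the monotonicity needed for the rotation-number and $m$-function machinery.
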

\begin{proof}
At first we prove the following fact:
	\begin{lemma}\label{sch mono}
		For any Jacobi matrices on the strip, the corresponding one parameter cocycle $E\mapsto (A^{(E-v)})^{(2)}$ is monotonic on $\RR$.
	\end{lemma}
	\begin{proof}
		By computation 
		\begin{eqnarray*}\nonumber
			&& J\frac{d}{dE}|_{E=E_0}(A^{(E_0-v)}(f(x))A^{(E_0-v)}(x))\cdot (A^{(E_0-v)}(x))^{-1}(A^{(E_0-v)}(f(x))^{-1}\\
			&=&\begin{pmatrix}
				-I_d&E-v(f(x))\\
				E-v(f(x))&-I_d-(E-v(f(x)))^2\end{pmatrix}\\
			&=&-\begin{pmatrix}
				I_d&\\
				-(E-v(f(x)))&I_d
			\end{pmatrix}\cdot \begin{pmatrix}
			I_d&-(E-v(f(x)))\\
			&I_d\end{pmatrix}
	\end{eqnarray*}which is negative definite, by definition of monotonicity in Appendix \ref{app A}, we get the proof.
\end{proof}
 Denote $B_E(x):=(A^{(E-v)})^{(2)}$. Since $E\mapsto B_E(x)$ is monotonic, mimic the proof of Theorem \ref{rot num}, we can define the $m-$function (taking values in $\{Z,\norm{Z}<1\}$), fibered rotation function (which is non increasing) and complexify the Lyapunov exponent $E+it \mapsto (L^d+i\rho)(B_{E+it})$. In fact these properties are already known for Jacobi matrices on the strip with form in \eqref{jacobi strip}, see \cite{kotani1988stochastic}, \cite{rotation2007schulz} for example or \cite{liu2015monotonic} for general monotonic symplectic cocycles.

The proof of Lemma \ref{per sch} is basically the same as the discussion in Appendix \ref{app A}. At first we prove the following Lemma similar to Lemma \ref{length of band}.

\begin{lemma}\label{length of band scho}
	For any interval $I'$ such that 
	\begin{equation}
	\forall E\in I', \text{ all eigenvalues of } B_E^{(n)}(x) \text{ are simple and norm }1
	\end{equation}
	we have $$|I'|\leq O(\frac{1}{n})$$
\end{lemma}

\begin{proof}
As in the proof of Lemma \ref{length of band}, we only need to prove for almost all $E\in I'$, we have 
\begin{equation}
\limsup_{t\to 0^+}\frac{ L^d(B_{E+it})}{ t}\geq c(d)
\end{equation}where $c(d)$ is a constant independent of $n$. Let $$\hat{B}_{E+it}(x):=C(f^2(x))B_{E+it}(x)C(x)^{-1}.$$where $C(x):=\begin{pmatrix}
I_d&E-v(f^{-1}(x))\\&I_d
\end{pmatrix}$. Then all the dynamical properties of cocycles $B_{E+it}$ and $\hat{B}_{E+it}$ are the same. Moreover by computation, 
\begin{equation}
\frac{\partial\hat{B}_{E+it}(x)}{\partial t}=\begin{pmatrix}
&i\cdot I_d\\
-i\cdot I_d&
\end{pmatrix}\cdot \hat{B}_{E+it}(x)
\end{equation}
Therefore we have \begin{equation}
\nb_{E+it}(x)=(\begin{pmatrix}
e^{-t}&\\&e^t
\end{pmatrix}+o(t))\cdot \nb_{E}(x)
\end{equation}
where $\nb(x):=C\hat{B}_E(x)C^{-1}$, $C$ is the Cayley element defined in \eqref{Cayley ele}. 

Without loss of generality, we fix an $E$ such that $\lim_{t\to0^+}\Phi_C^{-1}\cdot \hat{m}(E+it, x)$ exists and be finite, where $\hat{m}(E+it, \cdot)$ is the associated $m-$function of cocycle $\nb_{E+it}$. Since $\Phi_C^{-1}\cdot \hat{m}$ is the classical $m-$function taking values in $\{Z,Im(Z)>0\}$ for Jacobi matrices on the strip (see \cite{kotani1988stochastic} for example), by Lemma \ref{herglotz} we know that almost every $E\in \RR$ satisfies our condition.  

As Definition \ref{q def}, we define by $\hat{q}_{E+it}(x)$ the Jacobian (with respect to the volume form induced by the Bergman metric) of the map $$Z\mapsto \nb_{E+it}(x)\cdot Z$$
at point $Z=\hat{m}(E+it, x)$. Then similar to the case of symplectic cocycles, we have \begin{equation}\label{ld q hat exp}
L^d(\nb_{E+it})=\frac{1}{4d}\int_X\ln \hat{q}_{E+it}^{-1}d\mu . 
\end{equation} Moreover as the proof of Lemma \ref{q compu}, we have 
\begin{eqnarray}
\hat{q}_{E+it}^{-1}(x)&=&\frac{V(e^{2t}\hat{m}(E+it, f^2(x))+o(t))}{V(\hat{m}(E+it, f^2(x)))}\cdot e^{4td^2+o(t)}\nonumber\\
&=& e^{4td^2+o(t)}\cdot \Pi_{i=1}^d\frac{1-\sigma_i^2(\hat{m}(E+it, f^2(x)))}{1-(e^{4t} \sigma_i^2(\hat{m}(E+it, f^2(x)))+o(t))}\label{q hat inv expr}
\end{eqnarray}
By our choice of $E$, $\lim_{t\to 0^+}\sigma_i^2(\hat{m}(E+it, f^2(x)))$ exists and not greater than $1$. Therefore using the inequality \eqref{ele ln ineq}, we have 
\begin{equation}
 \frac{1-\sigma_i^2(\hat{m}(E+it, f^2(x)))}{1-(e^{4t} \sigma_i^2(\hat{m}(E+it, f^2(x)))+o(t))}\geq 1 \text{, when $t$ is small }
\end{equation}
Combining with \eqref{q hat inv expr}\eqref{ld q hat exp} we get,
\begin{equation}
L^d(\nb_{E+it})\geq dt+o(t) \text{, when $t$ is small }
\end{equation}for almost all $E\in I'$, which implies our lemma.
\end{proof}

As a result,  to prove Lemma \ref{per sch}, we only need to prove for generic potential $v$, two intervals $I_1,I_2$ satisfying the condition of Lemma \ref{length of band scho} cannot share a common boundary point. But in the corresponding part of Appendix \ref{app A}, we only use the condition that the one parameter family $\theta\mapsto A_\theta$ is monotonic, then by Lemma \ref{sch mono}, the proof is the same here.
\end{proof}


\begin{thebibliography}{aaaa}
	\bibitem{arnold2000complex}V.I.Arnold. {\it The complex Lagrangian Grassmannian} Functional Analysis and Its Applications. July-September, 2000, Volume 34, Issue, pp 208-210.
	
	\bibitem{avila2011density}Artur Avila. {\it Density of positive Lyapunov exponents for $SL(2,\RR)$-cocycles.} American Mathematical Society, 24(4):999-1014,2011.
	
	\bibitem{avila2009density}Artur Avila. {\it Density of positive Lyapunov exponents for quasiperiodic $SL (2, \RR)$-cocycles in arbitrary dimension.} Journal of Modern Dynamics, 2009, vol. 3, no 4, p. 631-636.
	
	\bibitem{avila2015global} Artur Avila. {\it Global theory of one-frequency Schr\"odinger operators. } Acta Math., 215 (2015), 1–54.
	
	\bibitem{avila2009spectrum}Artur Avila, {\it On the spectrum and Lyapunov exponent of limit periodic Schr\"odinger operators.}
	Comm. Math. Phys. 288 (2009), 907-918. MR2504859 (2010f:47057)
	\bibitem{avila2002formula}Artur Avila and Jairo Bochi. {\it A formula with some applications to the theory of Lyapunov exponents.} Israel Journal of Mathematics, 131(1):125-137,2002.
	
	\bibitem{avila2005generic}Artur Avila and David Damanik. {\it Generic singular spectrum for ergodic Schr\"odinger operators.} Duke Mathematical Journal, 130(2):393-400, 2005.
	
	\bibitem{avila2014complex}Artur Avila, Svetlana Jitomirskaya, and Christian Sadel. {\it Complex one frequency cocycles.} Journal of the European Mathematical Society, 16(9):1915-1935, 2014.
	
	\bibitem{avila2013monotonic}Artur Avila and Rapha\"el Krikorian. {\it Monotonic cocycles} 
	Inventiones mathematicae, October 2015, Volume 202, Issue 1, pp 271-331.
	
	\bibitem{avila2006reducibility}Artur Avila and Rapha\"el Krikorian. {\it Reducibility or nonuniform hyperbolicity for quasiperiodic Schrödinger cocycles.} Annals of Mathematics, 911-940, 2006.
	
	
	\bibitem{avila2013holonomy}Artur Avila, Jimmy Santamaria, and Marcelo Viana. {\it Cocycles over partially hyperbolic maps.} Astrisque, 358, 13-74.
	
	\bibitem{avila2007simplicity}Artur Avila and Marcelo Viana. {\it Simplicity of lyapunov spectra: proof of the
		Zorich-Kontsevich conjecture.} Acta mathematica, 198(1):1-56, 2007
	
	\bibitem{bonatti2004lyapunov}Christian Bonatti and Marcelo Viana. {\it Lyapunov exponents with multiplicity
		1 for deterministic products of matrices.} Ergodic Theory and Dynamical Systems, 24(05):1295-1330, 2004.
	
	\bibitem{casas2012efficient}Fernando Casas, Ander Murua, Mladen Nadinic. {\it Efficient computation of the Zassenhaus formula.} Computer Physics Communications
	Volume 183, Issue 11, November 2012, Pages 2386-2391.
	
	
	\bibitem{clerc1998compressions}Jean-Louis Clerc. {\it Compressions and contractions of hermitian symmetric spaces.} Mathematische Zeitschrift, 229(1):1-8, 1998.
	
	\bibitem{damanik2004ergodic}D.Damanik, R Killip. {\it Ergodic potentials with a discontinuous sampling function are non-deterministic} Math. Res. Lett. 12 (2005), 187-192.
	
	\bibitem{damanik2004gordon}D.Damanik. {\it A version of Gordon’s theorem for multi-dimensional Schrödinger operators.} Transactions of the American Mathematical Society 356.2 (2004): 495-507.
	
	\bibitem{duarte2014positive}P.Duarte, and S.Klein {\it Positive Lyapunov exponents for higher dimensional quasiperiodic cocycles.} Communications in Mathematical Physics, 332(1)(2014), 189-219.
	
	\bibitem{rotation2003fabbri}Roberta Fabbri, Russell Johnson and Carmen N\'u\~nez, {\it Rotation number for non-autonomous linear Hamiltonian systems I: Basic properties} Z. angew. Math. Phys. 54 (2003) 484–502.
	\bibitem{freitasaction}Pedro Jorge Freitas. {\it On the Action of the Symplectic Group on the Siegel Upper
		Half Plane.} PhD thesis, University of Illinois at Chicago.
	
	\bibitem{furstenberg1963noncommuting}Harry Furstenberg. {\it Noncommuting random products.} Transactions of the
	American Mathematical Society, pages 377-428, 1963.
	
	\bibitem{furstenberg1960products}H. Furstenberg, and H. Kesten. {\it Products of random matrices.} The Annals of Mathematical Statistics (1960): 457-469.
	
	\bibitem{gesztesy2000matrix} Fritz Gesztesy and Eduard Tsekanovskii. {\it On matrix-valued herglotz functions.} Mathematische Nachrichten, 218(1):61-138, 2000.
	
	\bibitem{gol1989lyapunov} I Ya Gol'dsheid and Grigorii Aleksandrovich Margulis, {\it Lyapunov indices of
		a product of random matrices.} Russian mathematical surveys, 44(5):11-71, 1989.
	
	\bibitem{guivarc1985frontiere} Y Guivarc'h and A Raugi. {\it Frontiere de furstenberg, propri\'et\'es de contraction
		et th\'eoremes de convergence.} Probability Theory and Related Fields, 69(2):187-
	242, 1985.
	
	\bibitem{harmer2007hermitian}M. Harmer. {\it Hermitian symplectic geometry and extension theory.} Journal of Physics A, 33 (2000), 9193--9203.
	
	\bibitem{haro2013thouless} Haro, Alex, and Joaquim Puig. {\it A Thouless formula and Aubry duality for long-range Schrödinger skew-products.} Nonlinearity 26.5 (2013): 1163.
	
	\bibitem{kotani1988stochastic} S. Kotani and B. Simon. {\it Stochastic schr¨odinger operators and jacobi matrices
		on the strip.} Communications in mathematical physics, 119(3):403-429, 1988.
	
	\bibitem{kotani1984ljaponov}Shinichi Kotani, {\it Ljaponov indices determine absolutely continuous spectra of stationary random one-dimensional Schr\"odinger operators.} Stochastic analysis. Ito, K. (ed.) pp. 225-248. North Holland: Amsterdam, 1984.
	\bibitem{kotani1989jacobi}Shinichi Kotani.  {\it Jacobi matrices with random potentials taking finitely many
		values.} Reviews in Mathematical Physics, 1(01):129-133, 1989.
	
	\bibitem{liu2015monotonic}X. Liu and D.Xu. {\it Monotonic symplectic cocycles.} In preparation.
	
	\bibitem{krantz1992functions}Steven G.Krantz. {\it Function theory of several complex variables.} AMS Chelsea Publishing, Providence, Rhode Island, 1992.
	
	\bibitem{sadel2013herman}Christian Sadel. {\it A Herman-Avila-Bochi formula for higher-dimensional
		pseudo-unitary and hermitian-symplectic cocycles.} Ergodic Theory and Dynamical
	Systems, 35(5): pp. 1582-1591, 2015.
	
	\bibitem{rotation2007schulz}Hermann Schulz-Baldes. {\it Rotation numbers for Jacobi matrices with matrix entries} Mathematical Physics Electronic Journal, ISSN 1086-6655, Volume 13, 2007.
	
	\bibitem{geometry2010schulz}Hermann Schulz-Baldes. {\it Geometry of Weyl theory for Jacobi matrices with matrix entries} Journal d'Analyse Math\'ematique
	January 2010, Volume 110, Issue 1, pp 129-165.
	
	\bibitem{simon1983kotani}Barry Simon. {\it Kotani theory for one dimensional stochastic jacobi matrices.}
	Communications in mathematical physics, 89(2):227-234, 1983.
	
	\bibitem{viana2008almost}Marcelo Viana. {\it Almost all cocycles over any hyperbolic system have nonvanishing
		lyapunov exponents.} Annals of Mathematics, pages 643-680, 2008.
\end{thebibliography}
\end{document}